\newtheorem{theorem}{Theorem}[section]
\newtheorem{lemma}[theorem]{Lemma}
\newtheorem{corollary}[theorem]{Corollary}
\theoremstyle{definition}
\newtheorem{example}[theorem]{Example}
\newtheorem{remark}[theorem]{Remark}
\newtheorem{case}{Case}
\newcommand{\Complex}{\mathbb C} 
\newcommand{\mf}{\mathfrak}
\newcommand{\ul}{\underline}
\newcommand{\Cn}{C_{\leq n}}
\newcommand{\dd}{\hspace{.1cm}|\hspace{.1cm}}
\newcommand{\ind}{{\rm ind \hspace{.1cm}}}
\newcommand{\rk}{{\rm rk \hspace{.1cm}}}
\newcommand{\A}{{\rm A}}
\newcommand{\B}{{\rm B}}
\newcommand{\C}{{\rm C}}
\newcommand{\D}{{\rm D}}
\newcommand{\E}{{\rm E}}
\newcommand{\F}{{\rm F}}
\newcommand{\G}{{\rm G}}
\newcommand{\X}{{\rm X}}
\newcommand{\ad}{{\rm ad \hspace{.05cm}}}
\newcommand{\lc}{\left\lceil}
\newcommand{\rc}{\right\rceil}
\newcommand{\lf}{\left\lfloor}
\newcommand{\rf}{\right\rfloor}
\newcommand{\dk}{\rm DK}
\newcommand{\sright}[1]{\sigma(\overrightarrow{#1})}
\newcommand{\sleft}[1]{\sigma(\overleftarrow{#1})}
\newcommand{\ceil}[1]{\left \lceil #1 \right \rceil }
\newcommand{\mir}{\rm mir}
\newcommand{\tright}[1]{\tau(\overrightarrow{#1})}
\newcommand{\tleft}[1]{\tau(\overleftarrow{#1})}
\begin{document}

\title{\bf The unbroken spectra of Frobenius seaweeds}

\author{Alex Cameron$^*$, Vincent E. Coll, Jr.$^{**}$, Matthew Hyatt$^{\dagger}$, and Colton Magnant$^{\dagger\dagger}$ }
\maketitle

\noindent
\textit{$^*$Department of Mathematics, Muhlenberg College, Allentown, PA, USA: alexcameron@muhlenberg.edu }\\
\textit{$^{**}$Department of Mathematics, Lehigh University, Bethlehem, PA, USA:  vec208@lehigh.edu}\\
\textit{$^{\dagger}$FactSet Research Systems, New York, NY, USA:  matthewdhyatt@gmail.com }\\
\textit{$^{\dagger\dagger}$
UPS of America, Inc. Atlanta, GA, USA. cmagnant@ups.com}

\begin{abstract}
\noindent
We show that if $\mathfrak{g}$ is a Frobenius seaweed, then the spectrum of the adjoint of a principal element consists of an unbroken set of integers whose multiplicities have a symmetric distribution.  Our methods are combinatorial.  
\end{abstract}

\noindent
\textit{Mathematics Subject Classification 2010}: 17B20, 05E15

\noindent 
\textit{Key Words and Phrases}: Frobenius Lie algebra, seaweed, biparabolic, principal element, Dynkin diagram, spectrum, regular functional, Weyl group


\section{Introduction}\label{sec:intro}

\textit{Notation: } All Lie algebras will be finite dimensional over $\Complex$, and the Lie multiplication will be denoted by $ [-,-]$.

\bigskip
The \textit{index} of a Lie algebra is an important algebraic invariant and, for \textit{seaweed algebras}, is bounded by the algebra's rank: $ \ind \mf{g} \leq \rk \mf{g}$, (see \textbf{\cite{dk}}). More formally, the index of a Lie algebra $\mf{g}$ is given by 

\[\ind \mf{g}=\min_{F\in \mf{g^*}} \dim  (\ker (B_F)),\]

\noindent where $F$ is a linear form on $\mf{g}$, and $B_F$ is the associated skew-symmetric bilinear \textit{Kirillov form}, defined by $B_F(x,y)=F([x,y])$ for $x,y\in \mf{g}$.  On a given $\mf{g}$, index-realizing functionals are called \textit{regular} and exist in profusion, being dense in both the Zariski and Euclidean topologies of $\mf{g}^*$.

Of particular interest are Lie algebras which have index zero. Such algebras are called \textit{Frobenius}
and have been studied extensively from the point of view of invariant theory \textbf{\cite{Ooms1}} and are of special interest in deformation and quantum group theory stemming from their connection with the classical Yang-Baxter equation (see \textbf{\cite{G1}} and \textbf{\cite{G2}}).
A regular functional $F$ on a Frobenius Lie algebra $\mathfrak{g}$ is called a \textit{Frobenius functional}; equivalently, $B_F(-,-)$ is non-degenerate.  Suppose $B_F(-,-)$ is non-degenerate and let $[F]$ be the matrix of $B_F(-,-)$ relative to some basis 
$\{x_1,\dots,x_n  \}$ of $\mathfrak{g}$.  In \textbf{\cite{BD}}, Belavin and Drinfel'd showed that   

\[
\sum_{i,j}[F]^{-1}_{ij}x_i\wedge x_j
\]

\noindent
is the infinitesimal of a \textit{Universal Deformation Formula} (UDF) based on $\mathfrak{g}$.  A UDF based on $\mathfrak{g}$ can be used to deform the universal enveloping algebra of $\mathfrak{g}$ and also the function space of any Lie group which contains $\mathfrak{g}$ in its Lie algebra of derivations.
Despite the existence proof of Belavin and Drinfel'd, 
only two UDF's are known:  the exponential and quasi-exponential.  These are based, respectively, on the abelian \textbf{\cite{UDF1}} and non-abelian \textbf{\cite{UDF2}} Lie algebras of dimension two (see also \textbf{\cite{Twist}}).

A Frobenius functional can be algorithmically produced as a by-product of the Kostant Cascade (see \textbf{\cite{Joseph5} and \cite{Kostant}}).  
If $F$ is a Frobenius functional on $\mathfrak{g}$, then the natural map $\mathfrak{g} \rightarrow \mathfrak{g}^*$ defined by $x \mapsto  F([x,-])$ is an isomorphism.  The image of $F$ under the inverse of this map is called a \textit{principal element} of $\mathfrak{g}$ and will be denoted $\widehat{F}$.  It is the unique element of $\mathfrak{g}$ such that 

$$
F\circ \ad \widehat{F}= F([\widehat{F},-]) = F.  
$$

As a consequence of Proposition 3.1 in \textbf{\cite{Ooms2}}, Ooms established that the spectrum of the adjoint of a principal element of a Frobenius Lie algebra is independent of the principal element chosen to compute it (see also \textbf{\cite{G2}}, Theorem 3).  Generally, the eigenvalues of ad$\widehat{F}$ can take on virtually any value (see \textbf{\cite{DIATTA}} for examples). But, in their formal study of principal elements \textbf{\cite{G3}}, Gerstenhaber and Giaquinto showed that if $\mathfrak{g}$ is a Frobenius seaweed subalgebra of $A_{n-1}=\mathfrak{sl}(n)$, then the spectrum of the adjoint of a principal element of $\mathfrak{g}$ consists entirely of integers.\footnote{Joseph used different methods to strongly extend this integrality result to all seaweed subalgebras of semisimple Lie algebras (see \textbf{\cite{Joseph2}} and \textbf{\cite{Joseph3}}).}  Subsequently, the last three of the current authors showed that this spectrum must actually be an \textit{unbroken} sequence of integers centered at one half \textbf{\cite{Coll typea}}.\footnote{The paper of Coll et al. appears as a follow-up to the \textit{Lett. in Math. Physics} article 
by Gerstenhaber and Giaquinto \textbf{\cite{G3}}, where they claim that the eigenvalues of the adjoint representation of a semisimple principal element of a Frobenius seaweed subalgebra of $\mathfrak{sl}(n)$ consists of an unbroken sequence of integers. However, M. Dufflo, in a private communication to those authors, indicated that their proof contained an error.} Moreover, the dimensions of the associated eigenspaces are shown to have a symmetric distribution.  

The goal of this paper is to establish the following theorem, which asserts that the above-described unbroken symmetric spectral phenomena for type A is exhibited in all seaweed algebras.

\begin{theorem}\label{thm:main}
If $\mathfrak{g}$ is a Frobenius seaweed and $\widehat{F}$ is a principal element of $\mathfrak{g}$, then the spectrum of $\ad \widehat{F}$ consists of an unbroken set of integers  
centered at one-half.  Moreover, the dimensions of the associated eigenspaces form a symmetric distribution. 
\end{theorem}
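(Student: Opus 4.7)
The plan is to reduce the general statement to the type A case of \cite{Coll typea} through a type-by-type analysis, using a ``mirror-symmetric meander'' combinatorial model for the classical types B, C, D, and finite case analysis for the exceptional types $\G_2, \F_4, \E_6, \E_7, \E_8$. The guiding principle is that each non-type-A classical simple Lie algebra arises as the fixed subalgebra of an involution $\sigma$ on a type A algebra, and seaweed subalgebras inherit this folding.

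For the classical types, a Frobenius seaweed $\mf g$ of type B, C, or D can be encoded by a pair of compositions obeying a mirror symmetry, yielding a $\sigma$-symmetric meander in the sense suggested by the paper's notation. Such a meander unfolds to an ordinary type A meander, and the construction lifts to an embedding $\mf g \hookrightarrow \tilde{\mf g} \subset \mf{sl}(N)$ realizing $\mf g = \tilde{\mf g}^\sigma$. First I would arrange that $\tilde{\mf g}$ is itself Frobenius, and that $\tilde F$ on $\tilde{\mf g}$ can be chosen $\sigma$-invariant with $\tilde F|_{\mf g} = F$. A short computation then shows that $\sigma$-invariance of $\tilde F$ forces $\wh{\tilde F} \in \tilde{\mf g}^\sigma = \mf g$, and in fact $\wh{\tilde F} = \wh F$; consequently $\ad \wh F$ on $\mf g$ is the restriction of $\ad \wh{\tilde F}$ to the $\sigma$-fixed subspace.

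Now applying the type A theorem, $\mathrm{spec}(\ad \wh{\tilde F})$ on $\tilde{\mf g}$ is an unbroken set of integers centered at $1/2$ with symmetric multiplicities. Since $\sigma$ commutes with $\ad \wh{\tilde F}$, every eigenspace $\tilde E_\lambda$ decomposes into $\sigma$-fixed and $\sigma$-antifixed parts, and the spectrum of $\ad \wh F$ on $\mf g$ is read off from the fixed parts. The pairing $\lambda \leftrightarrow 1-\lambda$ is $\sigma$-equivariant, so multiplicity symmetry on $\mf g$ descends from that on $\tilde{\mf g}$. For the exceptional types, Frobenius seaweed subalgebras form a finite list (up to parabolic conjugation), and the assertion can be checked directly using the Kostant cascade to produce $\wh F$ and compute its spectrum.

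The main obstacle is verifying unbrokenness in the folded setting: one must show that $(\tilde E_\lambda)^\sigma \neq 0$ for every integer $\lambda$ in the type A spectrum, so that no eigenvalue is lost upon restriction to $\mf g$. This forces a careful revisiting of the explicit eigenvector construction of \cite{Coll typea}, which builds eigenvectors out of root vectors along the Kostant cascade and the ``homotopy'' structure of the index-zero meander, in order to show that each nontrivial $\tilde E_\lambda$ admits a $\sigma$-invariant representative. Should this lifting prove recalcitrant, a fallback is to mimic the argument of \cite{Coll typea} intrinsically within each classical type, replacing the type A meander recursion by its $\sigma$-symmetric analogue suggested by the $\sm$ / $\mir$ notation and inducting on rank.
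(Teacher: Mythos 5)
Your reduction-by-folding plan has a gap at its very first step that I do not see how to repair: you need the unfolded type-A seaweed $\tilde{\mf g}\subset\mathfrak{sl}(N)$ to be Frobenius in order to invoke the type-A theorem, but index is not preserved under folding. The standard counterexample already occurs for parabolics: the Borel subalgebra of $\mathfrak{sp}(2n)$ (and more generally the inductive bases such as $\mf{p}^{\D}_q(1^q\dd\emptyset)$ used in this paper) is Frobenius, while the Borel subalgebra of $\mathfrak{sl}(2n)$ that it folds from has strictly positive index. So $\wh{\tilde F}$ simply does not exist in general, and there is no type-A spectrum to restrict. A second, independent problem is the step you yourself flag as the ``main obstacle'': even in cases where an unfolding happens to be Frobenius, you give no argument that $(\tilde E_\lambda)^{\sigma}\neq 0$ for every $\lambda$, and the data in this paper show the situation is genuinely delicate --- the type-B and type-C seaweeds $\mf{p}_8^{\B}(\Pi_1\dd\Pi_2)$ and $\mf{p}_8^{\C}(\Pi_1\dd\Pi_2)$ have identical split Dynkin diagrams and Weyl actions yet different spectra (multiplicities $1,5,12,12,5,1$ versus $1,2,15,15,2,1$), so the folded spectrum cannot be ``read off'' from a common type-A source in any uniform way.

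Your fallback --- arguing intrinsically within each classical type by a symmetric analogue of the type-A meander recursion --- is essentially what the actual proof does, but stating it is not carrying it out. The paper partitions the eigenvalues of $\ad\wh F$ by the maximally connected components of Joseph's orbit meander, proves symmetry about one-half component-by-component via explicit multiplicity counts (using the simple-eigenvalue tables from Joseph and the relation $\alpha(\wh F)+\overline{\alpha}(\wh F)=1$), and proves unbrokenness by induction on Panyushev-type winding-up moves (block creation, rotation expansion, pure expansion) from explicit type-B/C/D bases; the exceptional algebras are handled by finite enumeration, as you suggest. That last part of your plan is sound in spirit, but the core of the classical-type argument is missing.
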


\noindent


\noindent
\begin{remark}
In the prequel to this article \textbf{\cite{Coll typea}}, the type-A unbroken symmetric spectrum result is established using a combinatorial argument based on the graph-theoretic meander construction of Dergachev and Kirillov \textbf{\cite{dk}}.  The combinatorial arguments  heavily leverage the results of \textbf{\cite{Coll typea}}, but the inductions here are predicated on the basis-independent ``orbit meander" construction of Joseph \textbf{\cite{Joseph2}}.
\end{remark}

\begin{remark} To establish Theorem \ref{thm:main},
we first combinatorially establish that the spectrum is symmetric about one-half. We hasten to add that Ooms had previously (1980) established the symmetry result for all Frobenius Lie algebras using a more algebraic approach \textbf{\cite{Ooms2}}, cf.  \textbf{\cite{G3}}.
\end{remark}


\begin{remark}
In  \textbf{\cite{DIATTA}} Diatta and Manga show that any Frobenius Lie algebra can be embedded into $\mathfrak{sl}(n)$ for some $n$.  They suggest that it would be interesting if one could find an obstruction to embedding the algebra as a seaweed.  The unbroken spectrum provides such an obstruction.
\end{remark}




\section{Seaweeds}\label{seaweeds}

Let $\mf{g}$ be a simple Lie algebra equipped with a triangular decomposition 
\begin{eqnarray} \label{triangular}
\mf{g}=\mf{u_+}\oplus\mf{h}\oplus\mf{u_-},
\end{eqnarray}
where $\mf{h}$ is a Cartan subalgebra of $\mf{g}$. Let $\Delta$ be its root system 
where $\Delta_{+}$ are the \textit{positive roots} on $\mf{u_+}$ and 
$\Delta_-$ are the \textit{negative roots} roots
on $\mf{u_-}$, and let $\Pi$ denote the set of \textit{simple roots} of $\mf{g}$. 
Given $\beta\in\Delta$,
let $\mf{g}_{\beta}$ denote its corresponding root space,
and let $x_\beta$ denote the element of weight $\beta$ in a 
Chevalley basis of $\mf{g}$. Given a subset 
$\pi_1\subseteq \Pi$, let $\mf{p}_{\pi_1}$ denote the parabolic subalgebra of
$\mf{g}$ generated by all $\mf{g}_{\beta}$ such that $-\beta\in\Pi$ or $\beta\in\pi_1$.
Such a parabolic subalgebra is called \textit{standard}  with respect to the Borel subalgebra 
$\mf{u_-}\oplus\mf{h}$, and it is known that every parabolic subalgebra is conjugate to exactly
one standard parabolic subalgebra.

Formation of a seaweed subalgebra of $\mathfrak{g}$ requires two weakly opposite parabolic
subalgebras, i.e., two parabolic subalgebras $\mf{p}$ and $\mf{p'}$ such that 
$\mf{p} + \mf{p'}= \mf{g}$. In this case,  $\mf{p}\cap\mf{p'}$ is called a \textit{seaweed}, or in the nomenclature of Joseph \textbf{\cite{Joseph2}}, a  \textit{biparabolic} subalgebra of $\mf{g}$.  Given a subset $\pi_2\subseteq \Pi$,
let $\mf{p}_{\pi_2}^-$ denote the parabolic subalgebra of
$\mf{g}$ generated by all $\mf{g}_{\beta}$ such that $\beta\in\Pi$ 
or $-\beta\in\pi_2$.
Given two subsets $\pi_1,\pi_2\subseteq\Pi$, we have 
$\mf{p}_{\pi_1}+\mf{p}_{\pi_2}=\mf{g}$.  We now define the seaweed

\begin{eqnarray*}
\mf{p}(\pi_1\dd \pi_2)=\mf{p}_{\pi_1}\cap\mf{p}_{\pi_2}^-,
 \end{eqnarray*}

\noindent
which is said to be \textit{standard} with respect to the triangular decomposition in (\ref{triangular}).
Any seaweed is conjugate to a standard one, so it suffices to work
with standard seaweeds only. Note that an arbitrary seaweed may be conjugate to more than one standard seaweed (see \textbf{\cite{Panyushev1}}). 

We will often assume that $\pi_1\cup\pi_2=\Pi$,
for if not then $\mf{p}(\pi_1\dd \pi_2)$ can be expressed
as a direct sum of seaweeds.
Additionally, we use superscripts and subscripts 
to specify the type and rank of the containing simple Lie algebra 
$\mf{g}$. For example $\mf{p}^{\C}_{n}(\pi_1\dd \pi_2)$ is a 
seaweed subalgebra of $C_n=\mathfrak{sp}(2n)$, the symplectic Lie algebra of rank $n$.

It will be convenient to visualize the simple roots of a seaweed 
by constructing a graph, which we call 
a ``split Dynkin diagram".  
Suppose $\mf{g}$ has rank $n$, and let $\Pi=\{\alpha_n,\dots ,\alpha_1\}$,
where $\alpha_1$ is the exceptional root for types B, C, and D. 
Draw two horizontal lines of $n$ vertices, say $v_n^+,\dots ,v_1^+$ on top
and $v_n^-,\dots ,v_1^-$ on the bottom.
Color $v_i^+$ black if $\alpha_i\in\pi_1$, color $v_i^-$ black if $\alpha_i\in\pi_2$,
and color all other vertices white. Furthermore, if $\alpha_i,\alpha_j\in\pi_1$ are
not orthogonal, connect $v_i^+$ and $v_j^+$ with an edge in the 
standard way used in Dynkin diagrams.
Do the same for bottom vertices according to the roots in $\pi_2$. A \textit{maximally connected component} of a split Dynkin diagram is defined in the obvious way, and such a component is of type B, C, or D if it contains the exceptional root $\alpha_1$; otherwise the component is of type A.  See Examples \ref{AExample} - \ref{DExample} for what will, res become our type-A, type-C, and type-D running examples, respectively.    

\begin{example}\label{AExample}
Define the seaweed 
$\mf{p}_{9}^\A(\Upsilon_1 \dd \Upsilon_2)$ by the following sets:
$$\Upsilon_1 = 
\{\alpha_9, \alpha_7, \alpha_6, \alpha_4, \alpha_3, \alpha_2, \alpha_1\} ~ \text{ and } ~ \Upsilon_2 = \{\alpha_9,\alpha_8,\alpha_7,\alpha_5,
\alpha_4,\alpha_3,\alpha_2,\alpha_1\}.$$  
See Figure \ref{Asdynkin} for the split Dynkin diagram of $\mf{p}_{9}^\A(\Upsilon_1 \dd \Upsilon_2)$.  
\end{example}

\begin{figure}[H]
\[\begin{tikzpicture}
[decoration={markings,mark=at position 0.6 with 
{\arrow{angle 90}{>}}}]

\draw (1,1) node[draw,circle,fill=black,minimum size=5pt,inner sep=0pt] (1+) {};
\draw (2,1) node[draw,circle,fill=white,minimum size=5pt,inner sep=0pt] (2+) {};
\draw (3,1) node[draw,circle,fill=black,minimum size=5pt,inner sep=0pt] (3+) {};
\draw (4,1) node[draw,circle,fill=black,minimum size=5pt,inner sep=0pt] (4+) {};
\draw (5,1) node[draw,circle,fill=white,minimum size=5pt,inner sep=0pt] (5+) {};
\draw (6,1) node[draw,circle,fill=black,minimum size=5pt,inner sep=0pt] (6+) {};
\draw (7,1) node[draw,circle,fill=black,minimum size=5pt,inner sep=0pt] (7+) {};
\draw (8,1) node[draw,circle,fill=black,minimum size=5pt,inner sep=0pt] (8+) {};
\draw (9,1) node[draw,circle,fill=black,minimum size=5pt,inner sep=0pt] (9+) {};

\draw (1,0) node[draw,circle,fill=black,minimum size=5pt,inner sep=0pt] (1-) {};
\draw (2,0) node[draw,circle,fill=black,minimum size=5pt,inner sep=0pt] (2-) {};
\draw (3,0) node[draw,circle,fill=black,minimum size=5pt,inner sep=0pt] (3-) {};
\draw (4,0) node[draw,circle,fill=white,minimum size=5pt,inner sep=0pt] (4-) {};
\draw (5,0) node[draw,circle,fill=black,minimum size=5pt,inner sep=0pt] (5-) {};
\draw (6,0) node[draw,circle,fill=black,minimum size=5pt,inner sep=0pt] (6-) {};
\draw (7,0) node[draw,circle,fill=black,minimum size=5pt,inner sep=0pt] (7-) {};
\draw (8,0) node[draw,circle,fill=black,minimum size=5pt,inner sep=0pt] (8-) {};
\draw (9,0) node[draw,circle,fill=black,minimum size=5pt,inner sep=0pt] (9-) {};

\node at (1,.5) {$\alpha_9$};
\node at (2,.5) {$\alpha_8$};
\node at (3,.5) {$\alpha_7$};
\node at (4,.5) {$\alpha_6$};
\node at (5,.5) {$\alpha_5$};
\node at (6,.5) {$\alpha_4$};
\node at (7,.5) {$\alpha_3$};
\node at (8,.5) {$\alpha_2$};
\node at (9,.5) {$\alpha_1$};

\draw (1-) to (3-);
\draw (5-) to (9-);
\draw (3+) to (4+);
\draw (6+) to (9+);

;\end{tikzpicture}\]
\caption{The split Dynkin diagram of $\mf{p}_9^\A(
\Upsilon_1 \dd \Upsilon_2)$}
\label{Asdynkin}
\end{figure}
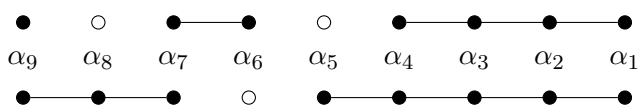

\begin{example}\label{BExample}
Define the seaweed 
$\mf{p}_{8}^\B(\Pi_1 \dd \Pi_2)$ by the following sets:
$$\Pi_1 = 
\{\alpha_8,\alpha_7,\alpha_6,\alpha_3, \alpha_2, \alpha_1\} ~ \text{ and } ~ \Pi_2 = \{\alpha_8,\alpha_7,\alpha_5,
\alpha_4,\alpha_3,\alpha_2\}.$$  
See Figure \ref{sBdynkin} for the split Dynkin diagram of $\mf{p}_{8}^\B(\Pi_1 \dd \Pi_2)$.  
\end{example}

\begin{figure}[H]
\[\begin{tikzpicture}
[decoration={markings,mark=at position 0.6 with 
{\arrow{angle 90}{>}}}]

\draw (1,1) node[draw,circle,fill=black,minimum size=5pt,inner sep=0pt] (1+) {};
\draw (2,1) node[draw,circle,fill=black,minimum size=5pt,inner sep=0pt] (2+) {};
\draw (3,1) node[draw,circle,fill=black,minimum size=5pt,inner sep=0pt] (3+) {};
\draw (4,1) node[draw,circle,fill=white,minimum size=5pt,inner sep=0pt] (4+) {};
\draw (5,1) node[draw,circle,fill=white,minimum size=5pt,inner sep=0pt] (5+) {};
\draw (6,1) node[draw,circle,fill=black,minimum size=5pt,inner sep=0pt] (6+) {};
\draw (7,1) node[draw,circle,fill=black,minimum size=5pt,inner sep=0pt] (7+) {};
\draw (8,1) node[draw,circle,fill=black,minimum size=5pt,inner sep=0pt] (8+) {};

\draw (1,0) node[draw,circle,fill=black,minimum size=5pt,inner sep=0pt] (1-) {};
\draw (2,0) node[draw,circle,fill=black,minimum size=5pt,inner sep=0pt] (2-) {};
\draw (3,0) node[draw,circle,fill=white,minimum size=5pt,inner sep=0pt] (3-) {};
\draw (4,0) node[draw,circle,fill=black,minimum size=5pt,inner sep=0pt] (4-) {};
\draw (5,0) node[draw,circle,fill=black,minimum size=5pt,inner sep=0pt] (5-) {};
\draw (6,0) node[draw,circle,fill=black,minimum size=5pt,inner sep=0pt] (6-) {};
\draw (7,0) node[draw,circle,fill=black,minimum size=5pt,inner sep=0pt] (7-) {};
\draw (8,0) node[draw,circle,fill=white,minimum size=5pt,inner sep=0pt] (8-) {};

\node at (1,.5) {$\alpha_8$};
\node at (2,.5) {$\alpha_7$};
\node at (3,.5) {$\alpha_6$};
\node at (4,.5) {$\alpha_5$};
\node at (5,.5) {$\alpha_4$};
\node at (6,.5) {$\alpha_3$};
\node at (7,.5) {$\alpha_2$};
\node at (8,.5) {$\alpha_1$};

\draw (1-) to (2-);
\draw (4-) to (7-);
\draw (1+) to (3+);
\draw (6+) to (7+);
\draw [double distance=.8mm,postaction={decorate}] (7+) to (8+);

;\end{tikzpicture}\]
\caption{The split Dynkin diagram of $\mf{p}_8^\B(
\Pi_1 \dd \Pi_2)$}
\label{sBdynkin}
\end{figure}

\begin{example}\label{CExample}
Define the seaweed 
$\mf{p}_{8}^\C(\Pi_1 \dd \Pi_2)$ by the following sets:
$$\Pi_1 = 
\{\alpha_8,\alpha_7,\alpha_6,\alpha_3, \alpha_2, \alpha_1\} ~ \text{ and } ~ \Pi_2 = \{\alpha_8,\alpha_7,\alpha_5,
\alpha_4,\alpha_3,\alpha_2\}.$$  
See Figure \ref{sdynkin} for the split Dynkin diagram of $\mf{p}_{8}^\C(\Pi_1 \dd \Pi_2)$.  
\end{example}

\begin{figure}[H]
\[\begin{tikzpicture}
[decoration={markings,mark=at position 0.6 with 
{\arrow{angle 90}{>}}}]

\draw (1,1) node[draw,circle,fill=black,minimum size=5pt,inner sep=0pt] (1+) {};
\draw (2,1) node[draw,circle,fill=black,minimum size=5pt,inner sep=0pt] (2+) {};
\draw (3,1) node[draw,circle,fill=black,minimum size=5pt,inner sep=0pt] (3+) {};
\draw (4,1) node[draw,circle,fill=white,minimum size=5pt,inner sep=0pt] (4+) {};
\draw (5,1) node[draw,circle,fill=white,minimum size=5pt,inner sep=0pt] (5+) {};
\draw (6,1) node[draw,circle,fill=black,minimum size=5pt,inner sep=0pt] (6+) {};
\draw (7,1) node[draw,circle,fill=black,minimum size=5pt,inner sep=0pt] (7+) {};
\draw (8,1) node[draw,circle,fill=black,minimum size=5pt,inner sep=0pt] (8+) {};

\draw (1,0) node[draw,circle,fill=black,minimum size=5pt,inner sep=0pt] (1-) {};
\draw (2,0) node[draw,circle,fill=black,minimum size=5pt,inner sep=0pt] (2-) {};
\draw (3,0) node[draw,circle,fill=white,minimum size=5pt,inner sep=0pt] (3-) {};
\draw (4,0) node[draw,circle,fill=black,minimum size=5pt,inner sep=0pt] (4-) {};
\draw (5,0) node[draw,circle,fill=black,minimum size=5pt,inner sep=0pt] (5-) {};
\draw (6,0) node[draw,circle,fill=black,minimum size=5pt,inner sep=0pt] (6-) {};
\draw (7,0) node[draw,circle,fill=black,minimum size=5pt,inner sep=0pt] (7-) {};
\draw (8,0) node[draw,circle,fill=white,minimum size=5pt,inner sep=0pt] (8-) {};

\node at (1,.5) {$\alpha_8$};
\node at (2,.5) {$\alpha_7$};
\node at (3,.5) {$\alpha_6$};
\node at (4,.5) {$\alpha_5$};
\node at (5,.5) {$\alpha_4$};
\node at (6,.5) {$\alpha_3$};
\node at (7,.5) {$\alpha_2$};
\node at (8,.5) {$\alpha_1$};

\draw (1-) to (2-);
\draw (4-) to (7-);
\draw (1+) to (3+);
\draw (6+) to (7+);
\draw [double distance=.8mm,postaction={decorate}] (8+) to (7+);

;\end{tikzpicture}\]
\caption{The split Dynkin diagram of $\mf{p}_8^\C(
\Pi_1 \dd \Pi_2)$}
\label{sdynkin}
\end{figure}


\begin{example}\label{DExample}
Define the seaweed $\mf{p}_{14}^\D(\Psi_1 \dd \Psi_2)$ by the following sets:
\begin{center}$\Psi_1 = 
\{\alpha_{14}, \alpha_{13}, \alpha_{12}, \alpha_{11}, \alpha_{10}, \alpha_9, \alpha_8, \alpha_7, \alpha_5, \alpha_4, \alpha_3, \alpha_2, \alpha_1\},$\\ and\\
$\Psi_2 = \{\alpha_{14}, \alpha_{13}, \alpha_{12}, \alpha_{11}, \alpha_9, \alpha_8, \alpha_7, \alpha_6, \alpha_5, \alpha_4, \alpha_3, \alpha_2 \}.$  
\end{center}

\bigskip
\noindent
See Figure \ref{Dsdynkin} for the split Dynkin diagram of $\mf{p}_{14}^\D(\Psi_1 \dd \Psi_2)$.  
\end{example}
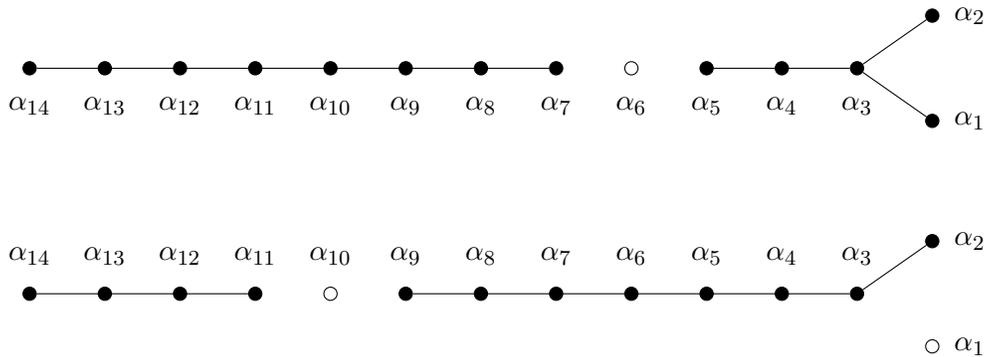
\begin{figure}[H]
\[\begin{tikzpicture}
[decoration={markings,mark=at position 0.6 with 
{\arrow{angle 90}{>}}}]

\draw (1,-3) node[draw,circle,fill=black,minimum size=5pt,inner sep=0pt] (1+) {};
\draw (2,-3) node[draw,circle,fill=black,minimum size=5pt,inner sep=0pt] (2+) {};
\draw (3,-3) node[draw,circle,fill=black,minimum size=5pt,inner sep=0pt] (3+) {};
\draw (4,-3) node[draw,circle,fill=black,minimum size=5pt,inner sep=0pt] (4+) {};
\draw (5,-3) node[draw,circle,fill=white,minimum size=5pt,inner sep=0pt] (5+) {};
\draw (6,-3) node[draw,circle,fill=black,minimum size=5pt,inner sep=0pt] (6+) {};
\draw (7,-3) node[draw,circle,fill=black,minimum size=5pt,inner sep=0pt] (7+) {};
\draw (8,-3) node[draw,circle,fill=black,minimum size=5pt,inner sep=0pt] (8+) {};
\draw (9,-3) node[draw,circle,fill=black,minimum size=5pt,inner sep=0pt] (9+) {};
\draw (10,-3) node[draw,circle,fill=black,minimum size=5pt,inner sep=0pt] (10+) {};
\draw (11,-3) node[draw,circle,fill=black,minimum size=5pt,inner sep=0pt] (11+) {};
\draw (12,-3) node[draw,circle,fill=black,minimum size=5pt,inner sep=0pt] (12+) {};
\draw (13,-2.3) node[draw,circle,fill=black,minimum size=5pt,inner sep=0pt] (13+) {};
\draw (13,-3.7) node[draw,circle,fill=white,minimum size=5pt,inner sep=0pt] (14+) {};

\draw (1,0) node[draw,circle,fill=black,minimum size=5pt,inner sep=0pt] (1-) {};
\draw (2,0) node[draw,circle,fill=black,minimum size=5pt,inner sep=0pt] (2-) {};
\draw (3,0) node[draw,circle,fill=black,minimum size=5pt,inner sep=0pt] (3-) {};
\draw (4,0) node[draw,circle,fill=black,minimum size=5pt,inner sep=0pt] (4-) {};
\draw (5,0) node[draw,circle,fill=black,minimum size=5pt,inner sep=0pt] (5-) {};
\draw (6,0) node[draw,circle,fill=black,minimum size=5pt,inner sep=0pt] (6-) {};
\draw (7,0) node[draw,circle,fill=black,minimum size=5pt,inner sep=0pt] (7-) {};
\draw (8,0) node[draw,circle,fill=black,minimum size=5pt,inner sep=0pt] (8-) {};
\draw (9,0) node[draw,circle,fill=white,minimum size=5pt,inner sep=0pt] (9-) {};
\draw (10,0) node[draw,circle,fill=black,minimum size=5pt,inner sep=0pt] (10-) {};
\draw (11,0) node[draw,circle,fill=black,minimum size=5pt,inner sep=0pt] (11-) {};
\draw (12,0) node[draw,circle,fill=black,minimum size=5pt,inner sep=0pt] (12-) {};
\draw (13,.7) node[draw,circle,fill=black,minimum size=5pt,inner sep=0pt] (13-) {};
\draw (13,-.7) node[draw,circle,fill=black,minimum size=5pt,inner sep=0pt] (14-) {};

\draw (1-) to (8-);
\draw (10-) to (12-);
\draw (12-) to (13-);
\draw (12-) to (14-);
\draw (1+) to (4+);
\draw (6+) to (12+);
\draw (12+) to (13+);

\node at (1,-2.5) {$\alpha_{14}$};
\node at (2,-2.5) {$\alpha_{13}$};
\node at (3,-2.5) {$\alpha_{12}$};
\node at (4,-2.5) {$\alpha_{11}$};
\node at (5,-2.5) {$\alpha_{10}$};
\node at (6,-2.5) {$\alpha_9$};
\node at (7,-2.5) {$\alpha_8$};
\node at (8,-2.5) {$\alpha_7$};
\node at (9,-2.5) {$\alpha_6$};
\node at (10,-2.5) {$\alpha_5$};
\node at (11,-2.5) {$\alpha_4$};
\node at (12,-2.5) {$\alpha_3$};
\node at (13.5,-2.3) {$\alpha_2$};
\node at (13.5,-3.7) {$\alpha_1$};

\node at (1,-.5) {$\alpha_{14}$};
\node at (2,-.5) {$\alpha_{13}$};
\node at (3,-.5) {$\alpha_{12}$};
\node at (4,-.5) {$\alpha_{11}$};
\node at (5,-.5) {$\alpha_{10}$};
\node at (6,-.5) {$\alpha_9$};
\node at (7,-.5) {$\alpha_8$};
\node at (8,-.5) {$\alpha_7$};
\node at (9,-.5) {$\alpha_6$};
\node at (10,-.5) {$\alpha_5$};
\node at (11,-.5) {$\alpha_4$};
\node at (12,-.5) {$\alpha_3$};
\node at (13.5,.7) {$\alpha_2$};
\node at (13.5,-.7) {$\alpha_1$};

;\end{tikzpicture}\]
\caption{The split Dynkin diagram for $\mf{p}_{14}^\D(\Psi_1 \dd \Psi_2)$}
\label{Dsdynkin}
\end{figure}

Given a seaweed $\mf{p}(\pi_1\dd\pi_2)$ of a simple 
Lie algebra $\mf{g}$, let $W$ denote the Weyl group of its
root system $\Delta$, generated by the reflections $s_\alpha$ 
such that $\alpha\in\Pi$. For $j=1,2$ define $W_{\pi_j}$ 
to be the subgroup of $W$ generated 
by $s_\alpha$ such that $\alpha\in\pi_j$. Let $w_j$
denote the unique longest (in the usual Coxeter sense) 
element of $W_{\pi_j}$,
and define an action
\[i_j\alpha=
\begin{cases}-w_j\alpha, & \text{ for all }\alpha\in\pi_j;\\
\alpha, & \text{ for all }\alpha\in\Pi\setminus\pi_j.
\end{cases}\]
Note that $i_j$ is an involution. 
For components of types B, C, and $D_k$ with $k$ even, the longest element $w_j = -id$.  However, if $k$ is odd,
\[w_j\alpha_i=
\begin{cases}
\alpha_2, & \text{ if  }i = 1;\\
\alpha_1, & \text{ if  }i = 2;\\
\alpha_i, & \text{ if }i \geq 3.
\end{cases}\]

To visualize the action of $i_j$, we append dashed edges
to the split Dynkin diagram of a seaweed. Specifically,
we draw a dashed edge from $v_i^+$ to $v_j^+$
if $i_1\alpha_i=\alpha_j$, and we draw a dashed  
edge from $v_i^-$ to $v_j^-$ if $i_2\alpha_i=\alpha_j$.
For simplicity, we will omit drawing the looped edge in the case
that $i_j\alpha_i=\alpha_i$.
We call the resulting graph the \textit{orbit meander}
of the associated seaweed.
See Figures \ref{fig:Aorbit meander} - \ref{fig:Dorbit meander}.

\begin{figure}[H]
\[\begin{tikzpicture}
[decoration={markings,mark=at position 0.6 with 
{\arrow{angle 90}{>}}}]

\draw (1,1) node[draw,circle,fill=black,minimum size=5pt,inner sep=0pt] (1+) {};
\draw (2,1) node[draw,circle,fill=white,minimum size=5pt,inner sep=0pt] (2+) {};
\draw (3,1) node[draw,circle,fill=black,minimum size=5pt,inner sep=0pt] (3+) {};
\draw (4,1) node[draw,circle,fill=black,minimum size=5pt,inner sep=0pt] (4+) {};
\draw (5,1) node[draw,circle,fill=white,minimum size=5pt,inner sep=0pt] (5+) {};
\draw (6,1) node[draw,circle,fill=black,minimum size=5pt,inner sep=0pt] (6+) {};
\draw (7,1) node[draw,circle,fill=black,minimum size=5pt,inner sep=0pt] (7+) {};
\draw (8,1) node[draw,circle,fill=black,minimum size=5pt,inner sep=0pt] (8+) {};
\draw (9,1) node[draw,circle,fill=black,minimum size=5pt,inner sep=0pt] (9+) {};

\draw (1,0) node[draw,circle,fill=black,minimum size=5pt,inner sep=0pt] (1-) {};
\draw (2,0) node[draw,circle,fill=black,minimum size=5pt,inner sep=0pt] (2-) {};
\draw (3,0) node[draw,circle,fill=black,minimum size=5pt,inner sep=0pt] (3-) {};
\draw (4,0) node[draw,circle,fill=white,minimum size=5pt,inner sep=0pt] (4-) {};
\draw (5,0) node[draw,circle,fill=black,minimum size=5pt,inner sep=0pt] (5-) {};
\draw (6,0) node[draw,circle,fill=black,minimum size=5pt,inner sep=0pt] (6-) {};
\draw (7,0) node[draw,circle,fill=black,minimum size=5pt,inner sep=0pt] (7-) {};
\draw (8,0) node[draw,circle,fill=black,minimum size=5pt,inner sep=0pt] (8-) {};
\draw (9,0) node[draw,circle,fill=black,minimum size=5pt,inner sep=0pt] (9-) {};


\draw (1-) to (3-);
\draw (5-) to (9-);
\draw (3+) to (4+);
\draw (6+) to (9+);

\draw [dashed] (3+) to [bend left=60] (4+);
\draw [dashed] (6+) to [bend left=60] (9+);
\draw [dashed] (7+) to [bend left=60] (8+);
\draw [dashed] (1-) to [bend right=60] (3-);
\draw [dashed] (5-) to [bend right=60] (9-);
\draw [dashed] (6-) to [bend right=60] (8-);

;\end{tikzpicture}\]
\caption{The orbit meander of $\mf{p}_9^\A(
\Upsilon_1 \dd \Upsilon_2)$}
\label{fig:Aorbit meander}
\end{figure}
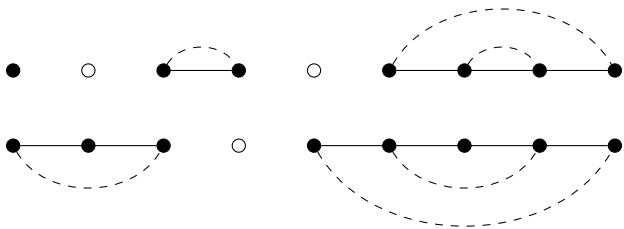

\begin{figure}[H]
\[\begin{tikzpicture}
[decoration={markings,mark=at position 0.6 with 
{\arrow{angle 90}{>}}}]

\draw (1,.75) node[draw,circle,fill=black,minimum size=5pt,inner sep=0pt] (1+) {};
\draw (2,.75) node[draw,circle,fill=black,minimum size=5pt,inner sep=0pt] (2+) {};
\draw (3,.75) node[draw,circle,fill=black,minimum size=5pt,inner sep=0pt] (3+) {};
\draw (4,.75) node[draw,circle,fill=white,minimum size=5pt,inner sep=0pt] (4+) {};
\draw (5,.75) node[draw,circle,fill=white,minimum size=5pt,inner sep=0pt] (5+) {};
\draw (6,.75) node[draw,circle,fill=black,minimum size=5pt,inner sep=0pt] (6+) {};
\draw (7,.75) node[draw,circle,fill=black,minimum size=5pt,inner sep=0pt] (7+) {};
\draw (8,.75) node[draw,circle,fill=black,minimum size=5pt,inner sep=0pt] (8+) {};

\draw (1,0) node[draw,circle,fill=black,minimum size=5pt,inner sep=0pt] (1-) {};
\draw (2,0) node[draw,circle,fill=black,minimum size=5pt,inner sep=0pt] (2-) {};
\draw (3,0) node[draw,circle,fill=white,minimum size=5pt,inner sep=0pt] (3-) {};
\draw (4,0) node[draw,circle,fill=black,minimum size=5pt,inner sep=0pt] (4-) {};
\draw (5,0) node[draw,circle,fill=black,minimum size=5pt,inner sep=0pt] (5-) {};
\draw (6,0) node[draw,circle,fill=black,minimum size=5pt,inner sep=0pt] (6-) {};
\draw (7,0) node[draw,circle,fill=black,minimum size=5pt,inner sep=0pt] (7-) {};
\draw (8,0) node[draw,circle,fill=white,minimum size=5pt,inner sep=0pt] (8-) {};

\draw (1-) to (2-);
\draw (4-) to (7-);
\draw (1+) to (3+);
\draw (6+) to (7+);
\draw [double distance=.8mm,postaction={decorate}] (7+) to (8+);

\draw [dashed] (1+) to [bend left=60] (3+);
\draw [dashed] (1-) to [bend right=60] (2-);
\draw [dashed] (4-) to [bend right=60] (7-);
\draw [dashed] (5-) to [bend right=60] (6-);

;\end{tikzpicture}\]
\caption{The orbit meander of $\mf{p}_8^\B(
\Pi_1
\dd \Pi_2)$}
\label{fig:Borbit meander}
\end{figure}

\begin{figure}[H]
\[\begin{tikzpicture}
[decoration={markings,mark=at position 0.6 with 
{\arrow{angle 90}{>}}}]

\draw (1,.75) node[draw,circle,fill=black,minimum size=5pt,inner sep=0pt] (1+) {};
\draw (2,.75) node[draw,circle,fill=black,minimum size=5pt,inner sep=0pt] (2+) {};
\draw (3,.75) node[draw,circle,fill=black,minimum size=5pt,inner sep=0pt] (3+) {};
\draw (4,.75) node[draw,circle,fill=white,minimum size=5pt,inner sep=0pt] (4+) {};
\draw (5,.75) node[draw,circle,fill=white,minimum size=5pt,inner sep=0pt] (5+) {};
\draw (6,.75) node[draw,circle,fill=black,minimum size=5pt,inner sep=0pt] (6+) {};
\draw (7,.75) node[draw,circle,fill=black,minimum size=5pt,inner sep=0pt] (7+) {};
\draw (8,.75) node[draw,circle,fill=black,minimum size=5pt,inner sep=0pt] (8+) {};

\draw (1,0) node[draw,circle,fill=black,minimum size=5pt,inner sep=0pt] (1-) {};
\draw (2,0) node[draw,circle,fill=black,minimum size=5pt,inner sep=0pt] (2-) {};
\draw (3,0) node[draw,circle,fill=white,minimum size=5pt,inner sep=0pt] (3-) {};
\draw (4,0) node[draw,circle,fill=black,minimum size=5pt,inner sep=0pt] (4-) {};
\draw (5,0) node[draw,circle,fill=black,minimum size=5pt,inner sep=0pt] (5-) {};
\draw (6,0) node[draw,circle,fill=black,minimum size=5pt,inner sep=0pt] (6-) {};
\draw (7,0) node[draw,circle,fill=black,minimum size=5pt,inner sep=0pt] (7-) {};
\draw (8,0) node[draw,circle,fill=white,minimum size=5pt,inner sep=0pt] (8-) {};

\draw (1-) to (2-);
\draw (4-) to (7-);
\draw (1+) to (3+);
\draw (6+) to (7+);
\draw [double distance=.8mm,postaction={decorate}] (8+) to (7+);

\draw [dashed] (1+) to [bend left=60] (3+);
\draw [dashed] (1-) to [bend right=60] (2-);
\draw [dashed] (4-) to [bend right=60] (7-);
\draw [dashed] (5-) to [bend right=60] (6-);

;\end{tikzpicture}\]
\caption{The orbit meander of $\mf{p}_8^\C(
\Pi_1
\dd \Pi_2)$}
\label{fig:orbit meander}
\end{figure}

\begin{figure}[H]
\[\begin{tikzpicture}
[decoration={markings,mark=at position 0.6 with 
{\arrow{angle 90}{>}}}]

\draw (1,-3) node[draw,circle,fill=black,minimum size=5pt,inner sep=0pt] (1+) {};
\draw (2,-3) node[draw,circle,fill=black,minimum size=5pt,inner sep=0pt] (2+) {};
\draw (3,-3) node[draw,circle,fill=black,minimum size=5pt,inner sep=0pt] (3+) {};
\draw (4,-3) node[draw,circle,fill=black,minimum size=5pt,inner sep=0pt] (4+) {};
\draw (5,-3) node[draw,circle,fill=white,minimum size=5pt,inner sep=0pt] (5+) {};
\draw (6,-3) node[draw,circle,fill=black,minimum size=5pt,inner sep=0pt] (6+) {};
\draw (7,-3) node[draw,circle,fill=black,minimum size=5pt,inner sep=0pt] (7+) {};
\draw (8,-3) node[draw,circle,fill=black,minimum size=5pt,inner sep=0pt] (8+) {};
\draw (9,-3) node[draw,circle,fill=black,minimum size=5pt,inner sep=0pt] (9+) {};
\draw (10,-3) node[draw,circle,fill=black,minimum size=5pt,inner sep=0pt] (10+) {};
\draw (11,-3) node[draw,circle,fill=black,minimum size=5pt,inner sep=0pt] (11+) {};
\draw (12,-3) node[draw,circle,fill=black,minimum size=5pt,inner sep=0pt] (12+) {};
\draw (13,-2.3) node[draw,circle,fill=black,minimum size=5pt,inner sep=0pt] (13+) {};
\draw (13,-3.7) node[draw,circle,fill=white,minimum size=5pt,inner sep=0pt] (14+) {};

\draw (1,0) node[draw,circle,fill=black,minimum size=5pt,inner sep=0pt] (1-) {};
\draw (2,0) node[draw,circle,fill=black,minimum size=5pt,inner sep=0pt] (2-) {};
\draw (3,0) node[draw,circle,fill=black,minimum size=5pt,inner sep=0pt] (3-) {};
\draw (4,0) node[draw,circle,fill=black,minimum size=5pt,inner sep=0pt] (4-) {};
\draw (5,0) node[draw,circle,fill=black,minimum size=5pt,inner sep=0pt] (5-) {};
\draw (6,0) node[draw,circle,fill=black,minimum size=5pt,inner sep=0pt] (6-) {};
\draw (7,0) node[draw,circle,fill=black,minimum size=5pt,inner sep=0pt] (7-) {};
\draw (8,0) node[draw,circle,fill=black,minimum size=5pt,inner sep=0pt] (8-) {};
\draw (9,0) node[draw,circle,fill=white,minimum size=5pt,inner sep=0pt] (9-) {};
\draw (10,0) node[draw,circle,fill=black,minimum size=5pt,inner sep=0pt] (10-) {};
\draw (11,0) node[draw,circle,fill=black,minimum size=5pt,inner sep=0pt] (11-) {};
\draw (12,0) node[draw,circle,fill=black,minimum size=5pt,inner sep=0pt] (12-) {};
\draw (13,.7) node[draw,circle,fill=black,minimum size=5pt,inner sep=0pt] (13-) {};
\draw (13,-.7) node[draw,circle,fill=black,minimum size=5pt,inner sep=0pt] (14-) {};

\draw (1-) to (8-);
\draw (10-) to (12-);
\draw (12-) to (13-);
\draw (12-) to (14-);
\draw (1+) to (4+);
\draw (6+) to (12+);
\draw (12+) to (13+);



\draw [dashed] (1+) to [bend right=60] (4+);
\draw [dashed] (2+) to [bend right=60] (3+);
\draw [dashed] (6+) to [bend right=80] (13+);
\draw [dashed] (7+) to [bend right=60] (12+);
\draw [dashed] (8+) to [bend right=60] (11+);
\draw [dashed] (9+) to [bend right=60] (10+);

\draw [dashed] (1-) to [bend left=60] (8-);
\draw [dashed] (2-) to [bend left=60] (7-);
\draw [dashed] (3-) to [bend left=60] (6-);
\draw [dashed] (4-) to [bend left=60] (5-);
\draw [dashed] (13-) to [bend left=60] (14-);

;\end{tikzpicture}\]
\caption{The orbit meander of $\mf{p}_{14}^\D(\Psi_1 \dd \Psi_2)$}
\label{fig:Dorbit meander}
\end{figure}
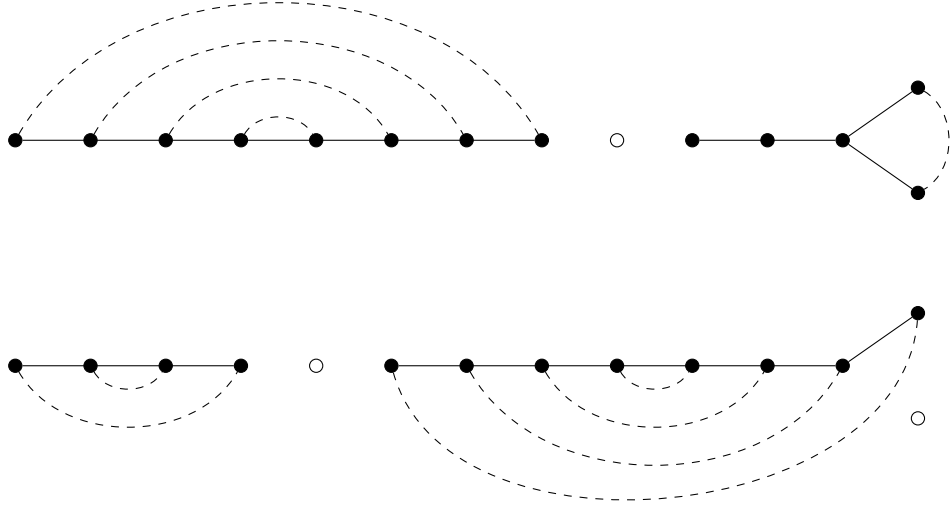

It turns out that the index of $\mf{p}(\pi_1\dd\pi_2)$
is governed by the orbits of the cyclic group $<i_1i_2>$
acting on $\Pi$.

\begin{theorem}[Joseph \textbf{\cite{Joseph4}}, Lemma 4.2]\label{thm:frobenius}
Given subsets $\pi_1,\pi_2\subseteq\Pi$ such that $\pi_1\cup\pi_2=\Pi$, 
let $\pi_{\cup}=\Pi\setminus (\pi_1\cap\pi_2)$.
The seaweed $\mf{p}(\pi_1\dd\pi_2)$ is Frobenius if and only if
every $<i_1i_2>$ orbit contains 
exactly one element from $\pi_{\cup}$.
\end{theorem}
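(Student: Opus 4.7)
The plan is to compute $\ind \mathfrak{q}$ for $\mathfrak{q} = \mathfrak{p}(\pi_1\dd\pi_2)$ by choosing a specific candidate functional $F$ on $\mathfrak{q}$ and identifying the radical of its Kirillov form $B_F$ with combinatorial data read from the $\langle i_1i_2\rangle$-orbits on $\Pi$.

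First, I would take $F \in \mathfrak{q}^*$ to be a ``biparabolic cascade'' functional, i.e., a sum of dual root covectors $x_\beta^*$ indexed by a maximal set of strongly orthogonal roots obtained by merging Kostant's cascade on $\mathfrak{p}_{\pi_1}$ with the corresponding one on $\mathfrak{p}_{\pi_2}^-$. For any $F$ of this shape, a standard root-space pairing argument shows that the radical of $B_F$ is contained in the Cartan $\mathfrak{h}$, so the problem reduces to describing
\[
K_F = \{\, h \in \mathfrak{h} : F([h,x]) = 0 \text{ for all } x \in \mathfrak{q}\,\}.
\]

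Second, I would translate the defining equations of $K_F$ into simple-root language. For each $\alpha \in \pi_j$, the cascade term attached to $\alpha$ produces, after Chevalley-basis bookkeeping, the linear equation $(\alpha - i_j\alpha)(h) = 0$; intuitively, the cascade root ``transports'' $\alpha$ by $-w_j$, which by definition of $i_j$ is exactly $i_j\alpha$. Intersecting the constraints from $\pi_1$ and $\pi_2$, any $h \in K_F$ satisfies $\alpha(h) = (i_1\alpha)(h)$ for $\alpha \in \pi_1$ and $\alpha(h) = (i_2\alpha)(h)$ for $\alpha \in \pi_2$, so the function $\alpha \mapsto \alpha(h)$ on $\Pi$ is constant on every $\langle i_1 i_2\rangle$-orbit.

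Third, the residual cascade terms indexed by elements of $\pi_\cup = \Pi \setminus (\pi_1 \cap \pi_2)$ enforce the additional vanishing $\alpha(h) = 0$ for $\alpha \in \pi_\cup$. Combined with orbit-constancy, a single $\pi_\cup$-element in an orbit $\mathcal{O}$ forces $\alpha(h) = 0$ throughout $\mathcal{O}$; two or more such elements impose this vanishing redundantly, producing a nontrivial radical (a direction in which $F$ may be perturbed while preserving a nonzero element of $K_F$); and an orbit entirely missing $\pi_\cup$ leaves one free parameter for the common value of $\alpha(h)$ on that orbit. Hence $K_F = 0$, i.e., $\mathfrak{q}$ is Frobenius, exactly when every $\langle i_1 i_2\rangle$-orbit meets $\pi_\cup$ in exactly one element.

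The main obstacle is executing Step~2 cleanly: verifying that the cascade-induced equations really are $(\alpha - i_j \alpha)(h) = 0$ and no others. This is transparent when $-w_j$ is trivial on each Dynkin component of $\pi_j$, but in types $\B$, $\C$ and in type $\D_{2k+1}$ components---where $-w_j$ acts as a nontrivial diagram involution on the exceptional simple roots---one must track the cascade roots carefully to confirm they realize $-w_j$ on simple roots. With that identification in place, the orbit translation in Step~3 becomes essentially formal.
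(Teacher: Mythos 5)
First, a point of reference: the paper does not prove this statement at all --- it is imported as Joseph's Lemma 4.2 from \cite{Joseph4} and used as a black box. So there is no in-paper argument to compare against; what follows is an assessment of your proposal on its own terms. The skeleton (an explicit functional whose kernel computation forces $\alpha(h)$ to be constant on $\langle i_1 i_2\rangle$-orbits and to vanish on $\pi_\cup$) does point in the right direction, but two steps are genuinely missing.

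The first gap is structural. Everything rests on the existence of a ``merged cascade'' functional $F$ whose Kirillov radical lies in $\mathfrak{h}$ and whose constraints on $h\in\mathfrak{h}$ are exactly $(\alpha-i_j\alpha)(h)=0$ for $\alpha\in\pi_j$ together with $\alpha(h)=0$ for $\alpha\in\pi_\cup$. None of this is standard. The cascades of $\mathfrak{p}_{\pi_1}$ and $\mathfrak{p}_{\pi_2}^-$ live in opposite nilradicals and need not merge into a strongly orthogonal set whose covector sum has radical inside $\mathfrak{h}$; the cascade roots are highest roots of a descending chain of subsystems, not objects indexed by $\pi_j$ or by $\pi_\cup$; and the identification of the induced linear conditions with $\alpha(h)=(i_j\alpha)(h)$ is precisely the content that needs proof. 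This is in essence Joseph's construction of an adapted pair for a biparabolic, which is a substantial theorem rather than Chevalley-basis bookkeeping, and your own closing caveat about types $\B$, $\C$, $\D_{2k+1}$ concedes that the key identification is unverified exactly where it is least obvious.

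The second gap is logical and sinks the ``only if'' direction even if the first step were granted. The index is a \emph{minimum} over all functionals, so exhibiting one $F$ with $K_F\neq 0$ (because an orbit misses $\pi_\cup$, or meets it twice and the constraints become redundant) proves nothing: every non-regular functional on a genuinely Frobenius algebra also has nontrivial radical. To conclude that $\mathfrak{p}(\pi_1\dd\pi_2)$ is not Frobenius you must show either that your particular $F$ is regular (index-realizing), or supply an independent lower bound on $\ind\mathfrak{p}(\pi_1\dd\pi_2)$ valid for all functionals. Your parenthetical about perturbing $F$ gestures at genericity but does not establish it; this is where the known proofs do real work, bounding the index from below via semi-invariants of the biparabolic and only then matching that bound with an explicit functional. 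Without one of these two ingredients the stated equivalence does not follow.
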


\begin{example}
We show that the seaweed in each of our running examples is Frobenius.  
The type-A seaweed $\mf{p}_{9}^\A(\Upsilon_1 \dd \Upsilon_2)$ is Frobenius as its $<i_1i_2>$ orbits are $\{\alpha_9, \alpha_6, \alpha_7\}$, $\{\alpha_8\}$, $\{\alpha_5,\alpha_4,\alpha_3,\alpha_2,\alpha_1\}$, and each orbit contains exactly one element
from $\pi_{\cup}=\{\alpha_8,\alpha_6,\alpha_5\}$.  
The type-B and type-C seaweeds are Frobenius as each seaweed's $<i_1i_2>$ orbits are
$\{\alpha_7,\alpha_6,\alpha_8\}$, $\{\alpha_5,\alpha_2\}$, $\{\alpha_4,\alpha_3\}$, $\{\alpha_1\}$, and each orbit contains exactly one element
from $\pi_{\cup}=\{\alpha_6,\alpha_5,\alpha_4,\alpha_1\}$.  
Similarly, the type-D seaweed $\mf{p}_{14}^\D(\Psi_1 \dd \Psi_2)$ is Frobenius as its $<i_1i_2>$ orbits are
$\{\alpha_6, \alpha_5\}$, $\{\alpha_{14}, \alpha_{11}, \alpha_{10}, \alpha_7, \alpha_4\}$, and $\{\alpha_{13}, \alpha_{12}, \alpha_9, \alpha_8, \alpha_3, \alpha_2, \alpha_1\}$, each of which contains exactly one element
from $\pi_{\cup}=\{\alpha_{10}, \alpha_6,\alpha_1\}$.
\end{example}

\begin{example} 
For an example of a seaweed that is not Frobenius, consider
the orbit meander of $\mf{p}_7^\A(\{\alpha_7,\alpha_6,\alpha_5,
\alpha_4,\alpha_3,\alpha_2\}
\dd \{\alpha_7,\alpha_6,\alpha_4,\alpha_3,\alpha_2,\alpha_1\})$ shown in Figure \ref{fig:orbit meander nonfrob} below.
The $<i_1i_2>$ orbits $\{\alpha_7,\alpha_3\}$ and 
$\{\alpha_6,\alpha_2\}$ contain no elements from
$\pi_{\cup} = \{\alpha_5, \alpha_1\}$, and the orbit $\{\alpha_5,\alpha_4,\alpha_1\}$ contains
two elements from $\pi_{\cup}$.
\end{example}

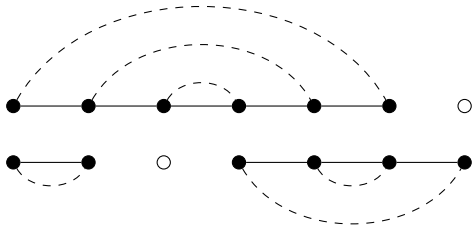
\begin{figure}[H]
\[\begin{tikzpicture}
[decoration={markings,mark=at position 0.6 with 
{\arrow{angle 90}{>}}}]

\draw (1,.75) node[draw,circle,fill=black,minimum size=5pt,inner sep=0pt] (1+) {};
\draw (2,.75) node[draw,circle,fill=black,minimum size=5pt,inner sep=0pt] (2+) {};
\draw (3,.75) node[draw,circle,fill=black,minimum size=5pt,inner sep=0pt] (3+) {};
\draw (4,.75) node[draw,circle,fill=black,minimum size=5pt,inner sep=0pt] (4+) {};
\draw (5,.75) node[draw,circle,fill=black,minimum size=5pt,inner sep=0pt] (5+) {};
\draw (6,.75) node[draw,circle,fill=black,minimum size=5pt,inner sep=0pt] (6+) {};
\draw (7,.75) node[draw,circle,fill=white,minimum size=5pt,inner sep=0pt] (7+) {};

\draw (1,0) node[draw,circle,fill=black,minimum size=5pt,inner sep=0pt] (1-) {};
\draw (2,0) node[draw,circle,fill=black,minimum size=5pt,inner sep=0pt] (2-) {};
\draw (3,0) node[draw,circle,fill=white,minimum size=5pt,inner sep=0pt] (3-) {};
\draw (4,0) node[draw,circle,fill=black,minimum size=5pt,inner sep=0pt] (4-) {};
\draw (5,0) node[draw,circle,fill=black,minimum size=5pt,inner sep=0pt] (5-) {};
\draw (6,0) node[draw,circle,fill=black,minimum size=5pt,inner sep=0pt] (6-) {};
\draw (7,0) node[draw,circle,fill=black,minimum size=5pt,inner sep=0pt] (7-) {};

\draw (1+) to (2+);
\draw (2+) to (3+);
\draw (3+) to (4+);
\draw (4+) to (5+);
\draw (5+) to (6+);
\draw (1-) to (2-);
\draw (4-) to (5-);
\draw (5-) to (6-);
\draw (6-) to (7-);

\draw [dashed] (1+) to [bend left=60] (6+);
\draw [dashed] (2+) to [bend left=60] (5+);
\draw [dashed] (3+) to [bend left=60] (4+);
\draw [dashed] (1-) to [bend right=60] (2-);
\draw [dashed] (4-) to [bend right=60] (7-);
\draw [dashed] (5-) to [bend right=60] (6-);

;\end{tikzpicture}\]
\caption{The orbit meander of $\mf{p}_7^\A(\{\alpha_7,\alpha_6,\alpha_5,
\alpha_4,\alpha_3,\alpha_2\}
\dd \{\alpha_7,\alpha_6,\alpha_4,\alpha_3,\alpha_2,\alpha_1\})$}
\label{fig:orbit meander nonfrob}
\end{figure}

\section{Principal Elements}\label{Principal Elements}

Given a Frobenius seaweed with Frobenius functional
$F$ and corresponding principal element $\widehat{F}$, the eigenvalues of
$\ad \widehat{F}$ are independent of which Frobenius functional is chosen \textbf{\cite{Ooms2}}.
We call these eigenvalues the \textit{spectrum} of the seaweed.
In this section, we describe an algorithm
for computing them.

Let $\mf{p}(\pi_1\dd\pi_2)$ be Frobenius and $F_{\pi_1,\pi_2}$ be an associated Frobenius functional with principal element $ \widehat{F}_{\pi_1,\pi_2}$.  (We will simply write $F$ and $\widehat{F}$ when the seaweed is understood.) 
Let $\sigma$ be a maximally connected component of either
$\pi_1$ or $\pi_2$, and for convenience let 
$\sigma=\{\alpha_k,\alpha_{k-1},\dots ,\alpha_1\}$ where $\alpha_1$ is the
exceptional root if $\sigma$ is of type B, C, or D.

Each eigenvalue of $\ad \widehat{F}$ can be expressed as a linear combination
of elements $\alpha_i(\widehat{F})$ where $\alpha_i$ is a simple root. We call such numbers \textit{simple eigenvalues}.
In many cases, the simple eigenvalues are determined.

\begin{lemma}[Joseph \textbf{\cite{Joseph}}, Section 5]
\label{eigenvalue table}
In Table \ref{tab:simple eigenvalue} below, the given value is 
$\alpha_i(\widehat{F})$ if $\sigma$ is a maximally connected component 
of $\pi_1$, and it is $-\alpha_i(\widehat{F})$ if 
$\sigma$ is a maximally connected component of $\pi_2$.
In either case it is assumed that $\alpha_i\in\sigma$.
\end{lemma}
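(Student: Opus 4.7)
The plan is to use the defining relation $F([\widehat{F}, x]) = F(x)$ together with an explicit normal form for the Frobenius functional $F$. Since every standard seaweed contains the Cartan subalgebra $\mathfrak{h}$, we may take $\widehat{F}\in\mathfrak{h}$, so determining the numbers $\{\alpha_i(\widehat{F}):\alpha_i\in\sigma\}$ reduces to solving a finite linear system indexed by $\sigma$.

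First, I would fix $F$ in its canonical Kostant-cascade form, writing $F = \sum_{\beta} x_{\beta}^{*}$ where the sum runs over a distinguished set of positive roots attached to $\pi_1$ (and a mirror negative set for $\pi_2$). By Theorem~\ref{thm:frobenius}, the $\langle i_1 i_2\rangle$-orbits provide exactly one cascade representative per orbit in $\pi_\cup$, which is what makes $B_F$ nondegenerate. What is critical for the present lemma is that the portion of the cascade attached to a maximally connected component $\sigma$ is supported inside the Levi subalgebra $\mathfrak{l}_\sigma$ generated by $\sigma$; consequently the computation of $\alpha_i(\widehat{F})$ for $\alpha_i\in\sigma$ localizes to $\sigma$ and is independent of the remaining combinatorial data of the seaweed.

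Second, for each cascade root $\beta$ whose simple-root support contains $\alpha_i$, applying $F([\widehat{F}, x_\beta]) = F(x_\beta)$ yields the scalar equation $\beta(\widehat{F}) = 1$. Expanding each such $\beta$ as a nonnegative integer combination of the simple roots in $\sigma$ gives a linear system in $\{\alpha_j(\widehat{F}):\alpha_j\in\sigma\}$ that is triangular enough, owing to the chain ($\A$), tail ($\B$/$\C$), or fork ($\D$) shape of the Dynkin diagram of $\sigma$, to admit a unique closed-form solution. Treating the four types separately reproduces Table~\ref{tab:simple eigenvalue}; the distinct entries for $\B$, $\C$, $\D$ trace back to the different inner-product patterns at the exceptional simple root $\alpha_1$. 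The sign flip in the $\pi_2$ case is then automatic: the cascade supporting $F$ then lies in $\mathfrak{u}_-$, so the analogous equations read $-\beta(\widehat{F}) = 1$, giving $-\alpha_i(\widehat{F})$ in place of $\alpha_i(\widehat{F})$.

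I expect the main obstacle to be the type-$\D_k$ case with $k$ odd. There the longest Weyl element $w_j$ is not $-\mathrm{id}$ but swaps $\alpha_1$ and $\alpha_2$, so the cascade attached to a $\D_k$ component picks up an additional symmetry that must be tracked when one writes down the linear system. Verifying that the resulting equations remain consistent and yield the advertised table entries is the delicate step, and is precisely where the explicit formula for $w_j$ on $\{\alpha_1,\alpha_2\}$ recorded just above the lemma is used.
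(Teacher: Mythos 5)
The paper does not actually prove this lemma: it is imported verbatim from Joseph \textbf{\cite{Joseph}}, Section 5, and used as a black box, so there is no internal argument to compare yours against. Judged on its own terms, your sketch does capture the correct mechanism --- choose $F$ in cascade form, observe that $[\widehat{F},x_\beta]=\beta(\widehat{F})x_\beta$ forces $\beta(\widehat{F})=\pm 1$ for every root $\beta$ in the support of $F$, and solve the resulting linear system --- and this is indeed the engine behind Joseph's computation.

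However, there is a genuine gap in your localization claim. You assert that the cascade attached to a component $\sigma$ determines all of $\{\alpha_j(\widehat{F}):\alpha_j\in\sigma\}$ via a system that is ``triangular enough to admit a unique closed-form solution,'' independently of the rest of the seaweed. For type $A_k$ this is false: the cascade of an $A_k$ component consists of the $\lceil k/2\rceil$ nested roots $\alpha_j+\cdots+\alpha_{k+1-j}$, so you get only $\lceil k/2\rceil$ equations in $k$ unknowns. Differencing consecutive cascade equations yields exactly the relations $\alpha_i(\widehat{F})+i_1\alpha_i(\widehat{F})\in\{0,1\}$ of Lemma \ref{eigenvalue orbit}, and the only value pinned down locally is $\alpha_i(\widehat{F})=1$ at a fixed point $i_j\alpha_i=\alpha_i$ --- which is precisely why the type-$A$ row of Table \ref{tab:simple eigenvalue} contains only that single entry. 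The remaining type-$A$ simple eigenvalues genuinely depend on the global orbit structure of the meander: in Figure \ref{fig:Asimple eigenvalues} they take values $-2,-1,1,2,3$ within a single component, so no component-local system can output them. A correct write-up must either restrict the claim to the entries actually listed in the table (where, for types B, C, and even D, the component's cascade is large enough --- e.g.\ $\{2\epsilon_1,\dots,2\epsilon_k\}$ for $C_k$ --- to determine every $\alpha_i(\widehat{F})$ locally), or explicitly invoke the equations contributed by the opposing components of $\pi_2$ that overlap $\sigma$; as written, the proposal proves more than is true. Your instinct that odd $D_k$ is delicate is right, but the reason is the same undetermination (Table \ref{tab:simple eigenvalue} omits $i=1,2$ there, deferring to Lemma \ref{eigenvalue orbit}), not merely bookkeeping of the swap $w_j\alpha_1=\alpha_2$.
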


\begin{table}[H]
\[\begin{tabular}{|l|l|l|}
\hline
Type & $\pm\alpha_i(\widehat{F})$ & $\pm\alpha_i(\widehat{F})$ \\
\hline
\hline
$A_k:k\geq 1$ & 
1, if $i_j\alpha_i=\alpha_i$
& \\
\hline
$B_{2k-1}:k\geq 2$ & $(-1)^{i-1}$, if $1\leq i\leq 2k-1$ & \\
\hline
$B_{2k}:k\geq 2$ & $(-1)^{i}$, if $2\leq i\leq 2k$ & 0, if $i=1$\\
\hline
$C_{k}:k\geq 2$ & $0$, if $2\leq i\leq k$ & 1, if $i=1$\\
\hline
$D_{2k}:k\geq 2$ & $(-1)^{i}$, if $3\leq i\leq 2k$ & 1, if $i=1,2$\\
\hline
$D_{2k+1}:k\geq 2$ & $(-1)^{i-1}$, if $3\leq i\leq 2k+1$ & \\
\hline
\end{tabular}\]
\caption{Values of $\pm\alpha_i(\widehat{F})$}
\label{tab:simple eigenvalue}
\end{table}

For the cases not covered by Table 1, 
the following lemma 
(which includes a corrected typo from \textbf{\cite{Joseph}})
can be applied.

\begin{lemma}[Joseph \textbf{\cite{Joseph}}, Section 5]
\label{eigenvalue orbit}
For each equation below, $\sigma$ is assumed to be a maximally 
connected component of $\pi_1$.
If $\sigma$ is a maximally connected component of $\pi_2$,
then replace $\alpha_i\mapsto -\alpha_i$ and $i_1\mapsto i_2$.
Then
\[\alpha_i(\widehat{F})+i_1\alpha_i(\widehat{F})=
\begin{dcases}
1, & \text{ if }\sigma \text{ is of type }A_k\text{ and }
(\alpha_i,i_1\alpha_i)<0; \\
0, & \text{ if }\sigma \text{ is of type }A_k\text{ and }
(\alpha_i,i_1\alpha_i)=0; \\
0, & \text{ if }\sigma \text{ is of type }D_{2k+1}\text{ and }
i=1.\\
\end{dcases}\]
\end{lemma}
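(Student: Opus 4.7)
The plan is to derive the stated relations from the defining property of the principal element combined with an explicit description of a Frobenius functional's support. Writing $F=\sum_\beta c_\beta x_\beta^*$ with $c_\beta\neq 0$ for exactly a set $\mathcal{S}$ of roots (prescribed by Joseph's orbit-meander / Kostant cascade for the seaweed), the identity $F\circ\ad\widehat{F}=F$ applied to $x_\beta$ gives $\beta(\widehat{F})=1$ for every $\beta\in\mathcal{S}$ supported in a component of $\pi_1$, and $\beta(\widehat{F})=-1$ for $\beta$ supported in a component of $\pi_2$ (since those root vectors lie in $\mf{u_-}$). The lemma then reduces to an elementary linear relation among nested sums of simple roots.

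For a type-$A_k$ component $\sigma=\{\alpha_k,\ldots,\alpha_1\}$ of $\pi_1$, the involution $i_1$ acts by $\alpha_j\mapsto\alpha_{k+1-j}$. The elements of $\mathcal{S}$ supported inside $\sigma$ are the nested ``palindromic'' roots
\[
\beta_j \;=\; \alpha_j+\alpha_{j+1}+\cdots+\alpha_{k+1-j}, \qquad 1\le j\le \lceil k/2\rceil,
\]
each of which satisfies $\beta_j(\widehat{F})=1$. Whenever $\beta_{j+1}$ also exists (i.e.\ $j+1\le\lceil k/2\rceil$), subtracting consecutive relations gives
\[
\alpha_j(\widehat{F})+\alpha_{k+1-j}(\widehat{F}) \;=\; \beta_j(\widehat{F})-\beta_{j+1}(\widehat{F}) \;=\; 0,
\]
which is exactly the stated identity in the non-adjacent case $(\alpha_j,i_1\alpha_j)=0$. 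The adjacent case $(\alpha_j,i_1\alpha_j)<0$ occurs only for the innermost pair $\{\alpha_{k/2},\alpha_{k/2+1}\}$ of an even $A_k$, where the innermost nested root is $\beta_{k/2}=\alpha_{k/2}+\alpha_{k/2+1}$ itself, forcing $\alpha_{k/2}(\widehat{F})+\alpha_{k/2+1}(\widehat{F})=1$.

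For a type-$D_{2k+1}$ component with $i=1$, the involution swaps the fork nodes $\alpha_1,\alpha_2$, so the target is $\alpha_1(\widehat{F})+\alpha_2(\widehat{F})=0$. I would use the highest root
\[
\theta \;=\; \alpha_1+\alpha_2+2\alpha_3+2\alpha_4+\cdots+2\alpha_{2k}+\alpha_{2k+1},
\]
which sits at the top of the Kostant cascade and hence lies in $\mathcal{S}$, giving $\theta(\widehat{F})=1$. Substituting the Table \ref{tab:simple eigenvalue} values $\alpha_i(\widehat{F})=(-1)^{i-1}$ for $3\le i\le 2k+1$, the middle contribution $2\sum_{i=3}^{2k}(-1)^{i-1}$ vanishes (alternating with evenly many terms) and $\alpha_{2k+1}(\widehat{F})=1$, collapsing the equation to $\alpha_1(\widehat{F})+\alpha_2(\widehat{F})=0$. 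The version for $\sigma\subseteq\pi_2$ follows by the sign flip $\alpha_i\mapsto-\alpha_i$, $i_1\mapsto i_2$.

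The main obstacle is the first step: justifying that the specified roots $\beta_j$ and $\theta$ genuinely appear with nonzero coefficient in \emph{some} Frobenius functional on the full seaweed $\mf{p}(\pi_1\dd\pi_2)$, rather than only for the isolated subsystem spanned by $\sigma$. This is precisely what Joseph's orbit-meander analysis supplies: the cascade splits along maximally connected components, so the local type-$A$ or type-$D$ contributions survive restriction to the seaweed. Once that input is granted, the remainder of the argument is the elementary bookkeeping above.
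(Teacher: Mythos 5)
The paper offers no proof of this lemma at all: it is imported verbatim from Joseph's Section 5 (with a corrected typo), so there is no in-paper argument to measure yours against. Judged on its own terms, your bookkeeping is correct. Granting that the Frobenius functional has the form $F=\sum_{\beta\in\mathcal{S}}x_\beta^*$ with $\mathcal{S}$ containing the nested roots $\beta_j=\alpha_j+\cdots+\alpha_{k+1-j}$ of each type-$A$ component of $\pi_1$ (and the corresponding negative roots for components of $\pi_2$), the eigenvector identity $\beta(\widehat{F})c_\beta=c_\beta$ does force $\beta_j(\widehat{F})=1$; the telescoping $\beta_j-\beta_{j+1}=\alpha_j+\alpha_{k+1-j}$ then gives the orthogonal case, the innermost pair of an even $A_k$ gives the adjacent case, and your $D_{2k+1}$ computation with the highest root together with Table \ref{tab:simple eigenvalue} is arithmetically right and not circular, since that table deliberately leaves $\alpha_1(\widehat{F}),\alpha_2(\widehat{F})$ undetermined in the $D_{2k+1}$ row.

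The genuine gap is the one you flag yourself, and it cannot be waved through: the entire argument rests on (a) $\widehat{F}\in\mathfrak{h}$, which requires $F$ to be supported purely on root vectors and is already the substance of the ``adapted pair'' condition, and (b) the claim that the Kostant cascade of each maximally connected component actually occurs in the support of a Frobenius functional for the \emph{whole} seaweed. Point (b) is available in type A through the Dergachev--Kirillov meander (the arcs of a block are exactly your palindromic roots), but for a general biparabolic the regular functional produced by Joseph's construction is not simply the disjoint union of component cascades; establishing which roots survive is precisely the content of the Section 5 analysis you are citing, i.e., of the result you are trying to prove. So your proposal is best read as a correct and clean \emph{reduction} of the lemma to the structure of Joseph's functional, not as an independent proof; to close it you would need either to restrict to type A and invoke the meander construction explicitly, or to carry out the adapted-pair construction for each component type and verify that $\beta_j$ and $\theta$ carry nonzero coefficients there.
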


\noindent
Here,  $(\alpha, \beta)$ denotes
the standard 
inner product on the Euclidean representation of the simple roots.  
Using Table \ref{tab:simple eigenvalue} and applying Lemma \ref{eigenvalue orbit}, we compute the simple eigenvalues for each running example.  See Figures \ref{fig:Asimple eigenvalues} - \ref{fig:Dsimple eigenvalues}, where each simple eigenvalue is noted above, below, or next to the appropriate vertex in the orbit meander for this seaweed.

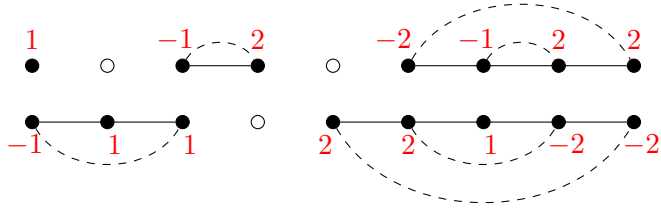
\begin{figure}[H]
\[\begin{tikzpicture}
[decoration={markings,mark=at position 0.6 with 
{\arrow{angle 90}{>}}}]

\draw (1,.75) node[draw,circle,fill=black,minimum size=5pt,inner sep=0pt] (1+) {};
\draw (2,.75) node[draw,circle,fill=white,minimum size=5pt,inner sep=0pt] (2+) {};
\draw (3,.75) node[draw,circle,fill=black,minimum size=5pt,inner sep=0pt] (3+) {};
\draw (4,.75) node[draw,circle,fill=black,minimum size=5pt,inner sep=0pt] (4+) {};
\draw (5,.75) node[draw,circle,fill=white,minimum size=5pt,inner sep=0pt] (5+) {};
\draw (6,.75) node[draw,circle,fill=black,minimum size=5pt,inner sep=0pt] (6+) {};
\draw (7,.75) node[draw,circle,fill=black,minimum size=5pt,inner sep=0pt] (7+) {};
\draw (8,.75) node[draw,circle,fill=black,minimum size=5pt,inner sep=0pt] (8+) {};
\draw (9,.75) node[draw,circle,fill=black,minimum size=5pt,inner sep=0pt] (9+) {};

\draw (1,0) node[draw,circle,fill=black,minimum size=5pt,inner sep=0pt] (1-) {};
\draw (2,0) node[draw,circle,fill=black,minimum size=5pt,inner sep=0pt] (2-) {};
\draw (3,0) node[draw,circle,fill=black,minimum size=5pt,inner sep=0pt] (3-) {};
\draw (4,0) node[draw,circle,fill=white,minimum size=5pt,inner sep=0pt] (4-) {};
\draw (5,0) node[draw,circle,fill=black,minimum size=5pt,inner sep=0pt] (5-) {};
\draw (6,0) node[draw,circle,fill=black,minimum size=5pt,inner sep=0pt] (6-) {};
\draw (7,0) node[draw,circle,fill=black,minimum size=5pt,inner sep=0pt] (7-) {};
\draw (8,0) node[draw,circle,fill=black,minimum size=5pt,inner sep=0pt] (8-) {};
\draw (9,0) node[draw,circle,fill=black,minimum size=5pt,inner sep=0pt] (9-) {};

\node at (9,1.1) [color=red] {{$2$}};
\node at (8,1.1) [color=red] {{$2$}};
\node at (6.9,1.1) [color=red] {{$-1$}};
\node at (5.8,1.1) [color=red] {{$-2$}};
\node at (9.1,-.3) [color=red] {{$-2$}};
\node at (8.1,-.3) [color=red] {{$-2$}};
\node at (7.1,-.3) [color=red] {{$1$}};
\node at (6,-.3) [color=red] {{$2$}};
\node at (4.9,-.3) [color=red] {{$2$}};
\node at (2.1,-.3) [color=red] {{$1$}};
\node at (3.1,-.3) [color=red] {{$1$}};
\node at (.9,-.3) [color=red] {{$-1$}};
\node at (1,1.1) [color=red] {{$1$}};
\node at (2.9,1.1) [color=red] {{$-1$}};
\node at (4,1.1) [color=red] {{$2$}};

\draw (1-) to (3-);
\draw (5-) to (9-);
\draw (3+) to (4+);
\draw (6+) to (9+);

\draw [dashed] (3+) to [bend left=60] (4+);
\draw [dashed] (6+) to [bend left=60] (9+);
\draw [dashed] (7+) to [bend left=60] (8+);
\draw [dashed] (1-) to [bend right=60] (3-);
\draw [dashed] (5-) to [bend right=60] (9-);
\draw [dashed] (6-) to [bend right=60] (8-);

;\end{tikzpicture}\]
\caption{The simple eigenvalues of $\mf{p}_9^\A(
\Upsilon_1 \dd \Upsilon_2)$}
\label{fig:Asimple eigenvalues}
\end{figure}

\begin{figure}[H]
\[\begin{tikzpicture}
[decoration={markings,mark=at position 0.6 with 
{\arrow{angle 90}{>}}}]

\draw (1,.75) node[draw,circle,fill=black,minimum size=5pt,inner sep=0pt] (1+) {};
\draw (2,.75) node[draw,circle,fill=black,minimum size=5pt,inner sep=0pt] (2+) {};
\draw (3,.75) node[draw,circle,fill=black,minimum size=5pt,inner sep=0pt] (3+) {};
\draw (4,.75) node[draw,circle,fill=white,minimum size=5pt,inner sep=0pt] (4+) {};
\draw (5,.75) node[draw,circle,fill=white,minimum size=5pt,inner sep=0pt] (5+) {};
\draw (6,.75) node[draw,circle,fill=black,minimum size=5pt,inner sep=0pt] (6+) {};
\draw (7,.75) node[draw,circle,fill=black,minimum size=5pt,inner sep=0pt] (7+) {};
\draw (8,.75) node[draw,circle,fill=black,minimum size=5pt,inner sep=0pt] (8+) {};

\draw (1,0) node[draw,circle,fill=black,minimum size=5pt,inner sep=0pt] (1-) {};
\draw (2,0) node[draw,circle,fill=black,minimum size=5pt,inner sep=0pt] (2-) {};
\draw (3,0) node[draw,circle,fill=white,minimum size=5pt,inner sep=0pt] (3-) {};
\draw (4,0) node[draw,circle,fill=black,minimum size=5pt,inner sep=0pt] (4-) {};
\draw (5,0) node[draw,circle,fill=black,minimum size=5pt,inner sep=0pt] (5-) {};
\draw (6,0) node[draw,circle,fill=black,minimum size=5pt,inner sep=0pt] (6-) {};
\draw (7,0) node[draw,circle,fill=black,minimum size=5pt,inner sep=0pt] (7-) {};
\draw (8,0) node[draw,circle,fill=white,minimum size=5pt,inner sep=0pt] (8-) {};

\node at (8,1.1) [color=red] {{$1$}};
\node at (7,1.1) [color=red] {{$-1$}};
\node at (6,1.1) [color=red] {{$1$}};
\node at (7.1,-.3) [color=red] {{$1$}};
\node at (6.1,-.3) [color=red] {{$-1$}};
\node at (4.9,-.3) [color=red] {{$2$}};
\node at (3.9,-.3) [color=red] {{$-1$}};
\node at (2.1,-.3) [color=red] {{$-1$}};
\node at (.9,-.3) [color=red] {{$2$}};
\node at (1,1.1) [color=red] {{$-2$}};
\node at (2,1.1) [color=red] {{$1$}};
\node at (3,1.1) [color=red] {{$2$}};

\draw (1-) to (2-);
\draw (4-) to (7-);
\draw (1+) to (3+);
\draw (6+) to (7+);
\draw [double distance=.8mm,postaction={decorate}] (7+) to (8+);

\draw [dashed] (1+) to [bend left=60] (3+);
\draw [dashed] (1-) to [bend right=60] (2-);
\draw [dashed] (4-) to [bend right=60] (7-);
\draw [dashed] (5-) to [bend right=60] (6-);

;\end{tikzpicture}\]
\caption{The simple eigenvalues of $\mf{p}_8^\B(
\Pi_1
\dd \Pi_2)$}
\label{fig:Bsimple eigenvalues}
\end{figure}

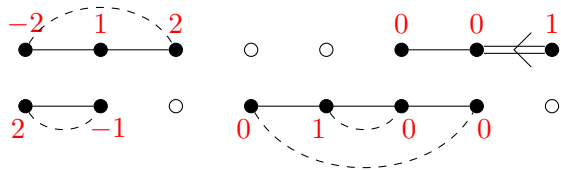
\begin{figure}[H]
\[\begin{tikzpicture}
[decoration={markings,mark=at position 0.6 with 
{\arrow{angle 90}{>}}}]

\draw (1,.75) node[draw,circle,fill=black,minimum size=5pt,inner sep=0pt] (1+) {};
\draw (2,.75) node[draw,circle,fill=black,minimum size=5pt,inner sep=0pt] (2+) {};
\draw (3,.75) node[draw,circle,fill=black,minimum size=5pt,inner sep=0pt] (3+) {};
\draw (4,.75) node[draw,circle,fill=white,minimum size=5pt,inner sep=0pt] (4+) {};
\draw (5,.75) node[draw,circle,fill=white,minimum size=5pt,inner sep=0pt] (5+) {};
\draw (6,.75) node[draw,circle,fill=black,minimum size=5pt,inner sep=0pt] (6+) {};
\draw (7,.75) node[draw,circle,fill=black,minimum size=5pt,inner sep=0pt] (7+) {};
\draw (8,.75) node[draw,circle,fill=black,minimum size=5pt,inner sep=0pt] (8+) {};

\draw (1,0) node[draw,circle,fill=black,minimum size=5pt,inner sep=0pt] (1-) {};
\draw (2,0) node[draw,circle,fill=black,minimum size=5pt,inner sep=0pt] (2-) {};
\draw (3,0) node[draw,circle,fill=white,minimum size=5pt,inner sep=0pt] (3-) {};
\draw (4,0) node[draw,circle,fill=black,minimum size=5pt,inner sep=0pt] (4-) {};
\draw (5,0) node[draw,circle,fill=black,minimum size=5pt,inner sep=0pt] (5-) {};
\draw (6,0) node[draw,circle,fill=black,minimum size=5pt,inner sep=0pt] (6-) {};
\draw (7,0) node[draw,circle,fill=black,minimum size=5pt,inner sep=0pt] (7-) {};
\draw (8,0) node[draw,circle,fill=white,minimum size=5pt,inner sep=0pt] (8-) {};

\node at (8,1.1) [color=red] {{$1$}};
\node at (7,1.1) [color=red] {{$0$}};
\node at (6,1.1) [color=red] {{$0$}};
\node at (7.1,-.3) [color=red] {{$0$}};
\node at (6.1,-.3) [color=red] {{$0$}};
\node at (4.9,-.3) [color=red] {{$1$}};
\node at (3.9,-.3) [color=red] {{$0$}};
\node at (2.1,-.3) [color=red] {{$-1$}};
\node at (.9,-.3) [color=red] {{$2$}};
\node at (1,1.1) [color=red] {{$-2$}};
\node at (2,1.1) [color=red] {{$1$}};
\node at (3,1.1) [color=red] {{$2$}};

\draw (1-) to (2-);
\draw (4-) to (7-);
\draw (1+) to (3+);
\draw (6+) to (7+);
\draw [double distance=.8mm,postaction={decorate}] (8+) to (7+);

\draw [dashed] (1+) to [bend left=60] (3+);
\draw [dashed] (1-) to [bend right=60] (2-);
\draw [dashed] (4-) to [bend right=60] (7-);
\draw [dashed] (5-) to [bend right=60] (6-);

;\end{tikzpicture}\]
\caption{The simple eigenvalues of $\mf{p}_8^\C(
\Pi_1
\dd \Pi_2)$}
\label{fig:simple eigenvalues}
\end{figure}

\begin{remark}
Modulo the arrow which emanates from the exceptional
root, observe that the components and Weyl action for $\mf{p}_8^\B(\Pi_1 \dd \Pi_2)$ and $\mf{p}_8^\C(\Pi_1 \dd \Pi_2)$ are identical. Even so, the simple eigenvalues of the type-B and type-C components differ.  Curiously, this affects the simple eigenvalues associated to the simple eigenvalues of same the type-A components of $\mf{p}_8^\B(\Pi_1 \dd \Pi_2)$ and $\mf{p}_8^\C(\Pi_1 \dd \Pi_2)$ leading to different spectra for these two seaweeds.
\end{remark}

\begin{figure}[H]
\[\begin{tikzpicture}
[decoration={markings,mark=at position 0.6 with 
{\arrow{angle 90}{>}}}]

\draw (1,-3) node[draw,circle,fill=black,minimum size=5pt,inner sep=0pt] (1+) {};
\draw (2,-3) node[draw,circle,fill=black,minimum size=5pt,inner sep=0pt] (2+) {};
\draw (3,-3) node[draw,circle,fill=black,minimum size=5pt,inner sep=0pt] (3+) {};
\draw (4,-3) node[draw,circle,fill=black,minimum size=5pt,inner sep=0pt] (4+) {};
\draw (5,-3) node[draw,circle,fill=white,minimum size=5pt,inner sep=0pt] (5+) {};
\draw (6,-3) node[draw,circle,fill=black,minimum size=5pt,inner sep=0pt] (6+) {};
\draw (7,-3) node[draw,circle,fill=black,minimum size=5pt,inner sep=0pt] (7+) {};
\draw (8,-3) node[draw,circle,fill=black,minimum size=5pt,inner sep=0pt] (8+) {};
\draw (9,-3) node[draw,circle,fill=black,minimum size=5pt,inner sep=0pt] (9+) {};
\draw (10,-3) node[draw,circle,fill=black,minimum size=5pt,inner sep=0pt] (10+) {};
\draw (11,-3) node[draw,circle,fill=black,minimum size=5pt,inner sep=0pt] (11+) {};
\draw (12,-3) node[draw,circle,fill=black,minimum size=5pt,inner sep=0pt] (12+) {};
\draw (13,-2.3) node[draw,circle,fill=black,minimum size=5pt,inner sep=0pt] (13+) {};
\draw (13,-3.7) node[draw,circle,fill=white,minimum size=5pt,inner sep=0pt] (14+) {};

\draw (1,0) node[draw,circle,fill=black,minimum size=5pt,inner sep=0pt] (1-) {};
\draw (2,0) node[draw,circle,fill=black,minimum size=5pt,inner sep=0pt] (2-) {};
\draw (3,0) node[draw,circle,fill=black,minimum size=5pt,inner sep=0pt] (3-) {};
\draw (4,0) node[draw,circle,fill=black,minimum size=5pt,inner sep=0pt] (4-) {};
\draw (5,0) node[draw,circle,fill=black,minimum size=5pt,inner sep=0pt] (5-) {};
\draw (6,0) node[draw,circle,fill=black,minimum size=5pt,inner sep=0pt] (6-) {};
\draw (7,0) node[draw,circle,fill=black,minimum size=5pt,inner sep=0pt] (7-) {};
\draw (8,0) node[draw,circle,fill=black,minimum size=5pt,inner sep=0pt] (8-) {};
\draw (9,0) node[draw,circle,fill=white,minimum size=5pt,inner sep=0pt] (9-) {};
\draw (10,0) node[draw,circle,fill=black,minimum size=5pt,inner sep=0pt] (10-) {};
\draw (11,0) node[draw,circle,fill=black,minimum size=5pt,inner sep=0pt] (11-) {};
\draw (12,0) node[draw,circle,fill=black,minimum size=5pt,inner sep=0pt] (12-) {};
\draw (13,.7) node[draw,circle,fill=black,minimum size=5pt,inner sep=0pt] (13-) {};
\draw (13,-.7) node[draw,circle,fill=black,minimum size=5pt,inner sep=0pt] (14-) {};

\draw (1-) to (8-);
\draw (10-) to (12-);
\draw (12-) to (13-);
\draw (12-) to (14-);
\draw (1+) to (4+);
\draw (6+) to (12+);
\draw (12+) to (13+);

\node at (1,-2.5) [color=red] {$1$};
\node at (2,-2.5) [color=red] {$-1$};
\node at (3,-2.5) [color=red] {$2$};
\node at (4,-2.5) [color=red] {$-1$};
\node at (6,-2.5) [color=red] {$-2$};
\node at (7,-2.5) [color=red] {$1$};
\node at (8,-2.5) [color=red] {$-1$};
\node at (9,-2.5) [color=red] {$2$};
\node at (10,-2.5) [color=red] {$-1$};
\node at (11,-2.5) [color=red] {$1$};
\node at (12,-2.5) [color=red] {$-1$};
\node at (13.5,-2.3) [color=red] {$2$};

\node at (1,-.5) [color=red] {$-1$};
\node at (2,-.5) [color=red] {$1$};
\node at (3,-.5) [color=red] {$-2$};
\node at (4,-.5) [color=red] {$1$};
\node at (5,-.5) [color=red] {$0$};
\node at (6,-.5) [color=red] {$2$};
\node at (7,-.5) [color=red] {$-1$};
\node at (8,-.5) [color=red] {$1$};
\node at (10,-.5) [color=red] {$1$};
\node at (11,-.5) [color=red] {$-1$};
\node at (12,-.5) [color=red] {$1$};
\node at (13.5,.7) [color=red] {$-2$};
\node at (13.5,-.7) [color=red] {$2$};

\draw [dashed] (1+) to [bend right=60] (4+);
\draw [dashed] (2+) to [bend right=60] (3+);
\draw [dashed] (6+) to [bend right=80] (13+);
\draw [dashed] (7+) to [bend right=60] (12+);
\draw [dashed] (8+) to [bend right=60] (11+);
\draw [dashed] (9+) to [bend right=60] (10+);

\draw [dashed] (1-) to [bend left=60] (8-);
\draw [dashed] (2-) to [bend left=60] (7-);
\draw [dashed] (3-) to [bend left=60] (6-);
\draw [dashed] (4-) to [bend left=60] (5-);
\draw [dashed] (13-) to [bend left=60] (14-);

;\end{tikzpicture}\]
\caption{The simple eigenvalues of $\mf{p}_{14}^\D(\Psi_1 \dd \Psi_2)$}
\label{fig:Dsimple eigenvalues}
\end{figure}
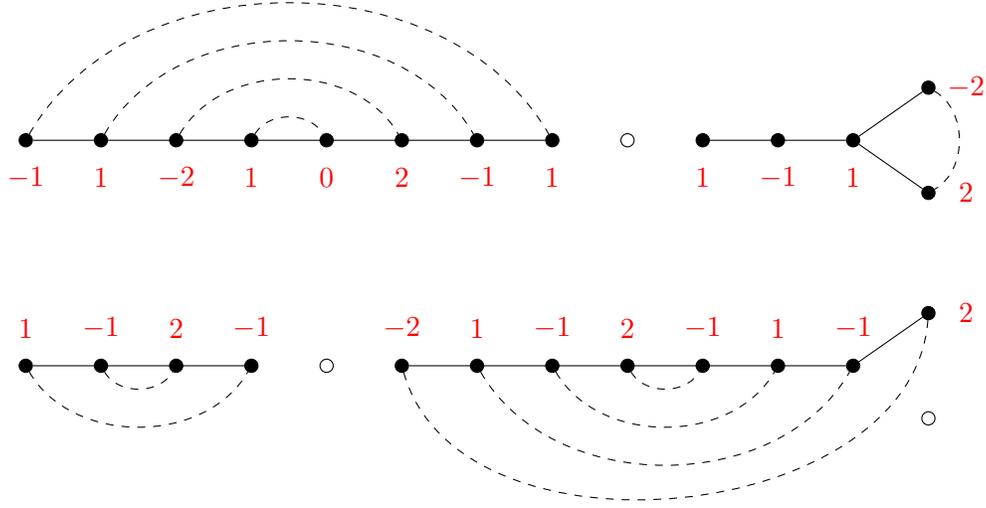

\begin{remark}
We will use the Greek letter $\sigma$ when referring to the simple roots of a maximally connected component of an orbit meander.  However, there are times when it will be more convenient to consider the set of eigenvalues associated to a set of consecutive vertices in an orbit meander.  
If $A$ is a set of consecutive vertices in an orbit meander, let $\mathcal{E}(A)$ be the eigenvalues associated to $A$.  That is, if $A = \{v_k, v_{k-1}, ... ,v_j\}$, let $\sigma = \{\alpha_k, \alpha_{k-1}, ..., \alpha_j \}$.  We make the following notational convention:
$$\mathcal{E}(A):=\mathcal{E}(\sigma).$$
\end{remark}

We have the following corollary of Lemma \ref{eigenvalue orbit} that will be used to prove symmetry and the unbroken property.  

\begin{theorem}\label{typeAsymmetricpositiveroot}
If $\sigma$ is a maximally connected component of type $A_k$, then $\sum_{i=1}^k \alpha_i (\widehat{F}) = 1$.  
\end{theorem}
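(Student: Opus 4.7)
The plan is to apply Lemma~\ref{eigenvalue orbit} and Table~\ref{tab:simple eigenvalue} to a natural pairing of the simple roots of $\sigma$. I will first assume $\sigma\subseteq\pi_1$; the argument for $\sigma\subseteq\pi_2$ is identical after the sign replacement prescribed by Lemma~\ref{eigenvalue orbit}. Since the longest element $w_1$ of the type-$A_k$ Weyl group sends $\alpha_i\mapsto -\alpha_{k+1-i}$, the involution $i_1$ acts on $\sigma$ as the reversal $\alpha_i\mapsto\alpha_{k+1-i}$. This partitions $\sigma$ into two-element orbits $\{\alpha_i,\alpha_{k+1-i}\}$, together with a single fixed root $\alpha_{(k+1)/2}$ when $k$ is odd.

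For each two-element orbit, Lemma~\ref{eigenvalue orbit} gives $\alpha_i(\widehat{F})+\alpha_{k+1-i}(\widehat{F})=1$ when $(\alpha_i,\alpha_{k+1-i})<0$ and $0$ when $(\alpha_i,\alpha_{k+1-i})=0$. Because consecutive simple roots in type $A$ have negative inner product and all non-consecutive pairs are orthogonal, the condition $(\alpha_i,\alpha_{k+1-i})<0$ reduces to $|k+1-2i|=1$. This equation has no integer solution when $k$ is odd, and produces exactly one orbit $\{i,k+1-i\}=\{k/2,\,k/2+1\}$ when $k$ is even.

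Summing over orbits therefore yields two clean cases. If $k$ is even, every two-element orbit contributes $0$ except the unique central adjacent pair, which contributes $1$, so $\sum_{i=1}^{k}\alpha_i(\widehat{F})=1$. If $k$ is odd, every two-element orbit contributes $0$ and the fixed root $\alpha_{(k+1)/2}$ contributes $1$ by Table~\ref{tab:simple eigenvalue}, again giving the total $1$. The only genuine obstacle is the parity bookkeeping that identifies which reversal pair is adjacent in the Dynkin diagram; once that is in place the rest is direct quotation of Joseph's formulas, and the analogous $\pi_2$-side calculation goes through by flipping all signs in a parallel manner.
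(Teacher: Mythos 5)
Your proposal is correct and follows essentially the same route as the paper: the paper's proof likewise pairs $\alpha_i$ with $\alpha_{k+1-i}$, notes that each non-central pair contributes $0$, and that the central element (odd $k$) or the central adjacent pair (even $k$) contributes $1$. You have simply made explicit the appeal to Lemma~\ref{eigenvalue orbit} and the inner-product bookkeeping that the paper leaves implicit.
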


\begin{proof}
If $k$ is odd, then $\alpha_i (\widehat{F}) = -\alpha_{k+1-i} (\widehat{F})$ for $i < \frac{k+1}{2}$, and $\alpha_{\frac{k+1}{2}} (\widehat{F}) = 1$.  
If $k$ is even, then $\alpha_i (\widehat{F}) = -\alpha_{k+1-i} (\widehat{F})$ for $i < \frac{k}{2}$, and $\alpha_{\frac{k}{2}} (\widehat{F}) + \alpha_{\frac{k}{2}+1} (\widehat{F}) = 1$.  
\end{proof}

If will be convenient to use the following notation.  
Let $\sigma$ be a maximally connected component of $\pi_1$, and let $\sigma=\{\alpha_k, \alpha_{k-1}, ..., \alpha_1\}$ be of type $A_k$.  The positive roots of $\sigma$ are of the form 
\[\alpha_j+\alpha_{j-1}+\dots +\alpha_i,\]
where $k\geq j\geq i\geq 1$.  
If $\alpha$ is a positive root with $j+i \neq k+1$, call $\overline{\alpha}$ its \textit{symmetric (positive) root}, where

\[\overline{\alpha}=
\begin{dcases}
\alpha_{i-1}+\alpha_{i-2} + \dots + \alpha_{k+1-j}, & \text{ if }k \geq j > i \geq \lc\frac{k}{2}\rc \geq 1; \\
\alpha_{i-1}+\alpha_{i-2} + \dots + \alpha_{k+1-j}, & \text{ if } \left|j - \lc\frac{k}{2}\rc\right| > \left|i - \lc\frac{k}{2}\rc\right|; \\
\alpha_{j+1}+ \alpha_{j+2} + \dots + \alpha_{k+1-i}, & \text{ if } \left|j - \lc\frac{k}{2}\rc\right| < \left|i - \lc\frac{k}{2}\rc\right|; \\
\alpha_{j+1}+ \alpha_{j+2} + \dots + \alpha_{k+1-i}, & \text{ if }k \geq \lc\frac{k}{2}\rc \geq j > i \geq 1. 
\end{dcases}\]

\noindent 
As a corollary of Theorem \ref{typeAsymmetricpositiveroot}, the symmetric roots $\alpha$ and $\overline{\alpha}$ satisfy the following relation:
\begin{eqnarray}\label{SymmPos}
\alpha(\widehat{F})+\overline{\alpha}(\widehat{F})=1
\end{eqnarray}
\noindent 
Symmetric roots satisfy the relation (\ref{SymmPos}), and since $\alpha(\widehat{F})=1$ when $j+i=k+1$, we say the positive root $\alpha$ has no symmetric root.  
We call $\overline{\alpha}(\widehat{F})$ the \textit{symmetric eigenvalue} of $\alpha(\widehat{F})$ or that $\overline{\alpha}(\widehat{F})$ is an eigenvalue \textit{symmetric to} $\alpha(\widehat{F})$.  

\begin{example}
Referring to, for example, Figure \ref{fig:simple eigenvalues}, consider the type-A component on the bottom $\sigma = \{\alpha_5, \alpha_4, \alpha_3, \alpha_2\}$.  The root symmetric to $\alpha = \alpha_5+\alpha_4+\alpha_3$ is $\overline{\alpha} = \alpha_2$, and the root symmetric to $\alpha = \alpha_5+\alpha_4$ is $\overline{\alpha} = \alpha_3+\alpha_2$.  The root $\alpha = \alpha_5 + \alpha_4 + \alpha_3 + \alpha_2$ has no symmetric root.
\end{example}

\section{Proof of Theorem \ref{thm:main} }

\subsection{Symmetry}

We will partition the multiset of eigenvalues according to the
maximally connected components $\sigma$ of $\pi_1$ and $\pi_2$. 
We will prove that each member of this partition is 
symmetric and unbroken.

If $\sigma$ is a Type-A maximally connected
component of $\pi_1$, then we say the multiset of eigenvalues 
from $\sigma$ is
\begin{eqnarray}\label{S1}
\mathcal{E}(\sigma)=\{\beta(\widehat{F})\mid\beta\in\mathbb{N}\sigma\cap\Delta_+\}
\cup\{0^{\ceil{|\sigma|/2}}\},
\end{eqnarray}
if $\sigma$ is of Type B, C or $\D_k$ with $k$ even, then
\begin{eqnarray}\label{S2}
\mathcal{E}(\sigma)=\{\beta(\widehat{F})\mid\beta\in\mathbb{N}\sigma\cap\Delta_+\}
\cup\{0^{|\sigma|}\},
\end{eqnarray}
and if $\sigma$ is of Type $\D_k$ with $k$ odd, then
\begin{eqnarray}\label{SD3}
\mathcal{E}(\sigma)=\{\beta(\widehat{F})\mid\beta\in\mathbb{N}\sigma\cap\Delta_+\}
\cup\{0^{|\sigma|-1}\},
\end{eqnarray}

\noindent 
where the exponent denotes the multiplicity, and 
$\mathbb{N}\sigma = \left\{\sum c_i \alpha_i ~|~ c_i > 0 ~ \text{ and } \alpha_i \in \sigma\right\}.$
\noindent
We proceed similarly if $\sigma$ is a maximally connected component of $\pi_2$ except that $\Delta_+$ is replaced by $\Delta_-$ in equations (\ref{S1}) - 
(\ref{SD3}).  
We demonstrate these computations for each running example.

\begin{example}\label{ASpectrum}
In the type-A running example $\mf{p}_{9}^\A(\Upsilon_1 \dd \Upsilon_2)$, on the top there are three type-A components:
$\sigma_1=\{\alpha_9\}$, 
$\sigma_2=\{\alpha_7, \alpha_6\}$, and
$\sigma_3=\{\alpha_4, \alpha_3, \alpha_2, \alpha_1\}$.  
There are two type-A components on the bottom:  $\sigma_4=\{\alpha_9, \alpha_8, \alpha_7\}$ and 
$\sigma_5=\{\alpha_5, \alpha_4, \alpha_3, \alpha_2, \alpha_1\}$.  
To compute $\mathcal{E}(\sigma_i)$, note,
for example, that the positive roots for the computation of $\mathcal{E}(\sigma_3)$ are elements of the set 
$$B_{\sigma_3}=\{\alpha_4, \alpha_3, \alpha_2, \alpha_1, \alpha_4+\alpha_3, \alpha_3+\alpha_2, \alpha_2+ \alpha_1, \alpha_4+\alpha_3+\alpha_2, \alpha_3+\alpha_2+\alpha_1, \alpha_4+\alpha_3+\alpha_2+\alpha_1  \}.$$ 
Applying each of $\beta\in B_{\sigma_3}$ to $\widehat{F}$ yields the multiset 
$$
\{-2, -1, 2, 2, -3, 1, 4, -1, 3, 1\} = \{-3,-2,-1,-1,1,1,2,2,3,4\}.$$  
Since $|\sigma_3|=4$, we have by equation (\ref{S1}) that
$$
\mathcal{E}(\sigma_3) = \{-3,-2,-1,-1,1,1,2,2,3,4\}\cup\{0,0\} = \{-3,-2,-1^2,0^2,1^2,2^2,3,4  \}.
$$
\noindent 
Similar computations yield
$$
\mathcal{E}(\sigma_1) = \{0,1\}, ~~
\mathcal{E}(\sigma_2) = \{-1,0,1,2\}, ~~
\mathcal{E}(\sigma_4) = \{-1,0^3,1^3,2\}, 
$$
$$
\mathcal{E}(\sigma_5) = \{-4,-3,-2^2,-1^2,0^3,1^3,2^2,3^2,4,5\}.
$$
\end{example}

\begin{example}\label{ex:Bspectrum}
In the type-B running example $\mf{p}_{8}^\B(\Pi_1 \dd \Pi_2)$, on the top there is a single type-A component $\sigma_1=\{\alpha_8, \alpha_7, \alpha_6\}$ and a single type-B component $\sigma_2=\{\alpha_3, \alpha_2, \alpha_1\}$. There are two type-A components on the bottom:  $\sigma_3=\{\alpha_8, \alpha_7\}$ and 
$\sigma_4=\{\alpha_5, \alpha_4, \alpha_3, \alpha_2\}$.  
For the type-A components, computations similar to those in Example \ref{ASpectrum} yield 
$$\mathcal{E}(\sigma_1) = \{-2,-1,0^2,1^2,2,3\}, ~~~~ \mathcal{E}(\sigma_3) = \{-1,0,1,2 \}, ~~\text{and}~~
\mathcal{E}(\sigma_4) = \{-1^2,0^4,1^4,2^2\}. $$
For the type-B component, the positive roots for the computation of 
$\mathcal{E}(\sigma_2)$ are elements of the set 
$$B_{\sigma_2}=\{\alpha_3, \alpha_2, \alpha_1, \alpha_3+\alpha_2, \alpha_2+\alpha_1, \alpha_3+\alpha_2+\alpha_1, \alpha_2+2\alpha_1, \alpha_3 + \alpha_2+2\alpha_1, \alpha_3+2\alpha_2+2\alpha_1\}.$$ 
\noindent
Applying each of $\beta\in B_{\sigma_2}$ to $\widehat{F}$ yields the multiset 
$$\{1,-1,1,0,0,1,1,2,1\} = \{-1,0,0,1,1,1,1,1,2\}.$$
Since $|\sigma_2| = 3$, we have by equation (\ref{S2}) that
$$
\mathcal{E}(\sigma_2) =
\{-1,0,0,1,1,1,1,1,2\}\cup \{0,0,0\} = 
\{-1,0^5,1^5,2\}.
$$

\end{example}

\begin{example}\label{ex:spectrum}
In the type-C running example $\mf{p}_{8}^\C(\Pi_1 \dd \Pi_2)$, on the top there is a single type-A component $\sigma_1=\{\alpha_8, \alpha_7, \alpha_6\}$ and a single type-C component $\sigma_2=\{\alpha_3, \alpha_2, \alpha_1\}$. There are two type-A components on the bottom:  $\sigma_3=\{\alpha_8, \alpha_7\}$ and 
$\sigma_4=\{\alpha_5, \alpha_4, \alpha_3, \alpha_2\}$.  
For the type-A components, computations similar to those in Example \ref{ASpectrum} yield 
$$\mathcal{E}(\sigma_1) = \{-2,-1,0^2,1^2,2,3\}, ~~~~ \mathcal{E}(\sigma_3) = \{-1,0,1,2 \}, ~~\text{and}~~
\mathcal{E}(\sigma_4) = \{0^6,1^6\}. $$
For the type-C component, the positive roots for the computation of 
$\mathcal{E}(\sigma_2)$ are elements of the set 
$$B_{\sigma_2}=\{\alpha_3, \alpha_2, \alpha_1, \alpha_3+\alpha_2, \alpha_2+\alpha_1, \alpha_3+\alpha_2+\alpha_1, 2\alpha_2+\alpha_1, \alpha_3 + 2\alpha_2+\alpha_1, 2\alpha_3+2\alpha_2+\alpha_1\}.$$ 
\noindent
Applying each of $\beta\in B_{\sigma_2}$ to $\widehat{F}$ yields the multiset 
$$\{0,0,1,0,1,1,1,1,1\} = \{0,0,0,1,1,1,1,1,1\}$$
Since $|\sigma_2| = 3$, we have by equation (\ref{S2}) that
$$
\mathcal{E}(\sigma_2) =
\{0,0,1,0,1,1,1,1,1\}\cup \{0,0,0\} = 
\{0^6,1^6\}.
$$

\end{example}

\begin{example}\label{ex:Dspectrum}
Finally, in the type-D running example, $\mf{p}_{14}^\D(\Psi_1 \dd \Psi_2)$, on the top there again is single type-A component $\sigma_1=\{\alpha_{14}, \alpha_{13}, \alpha_{12}, \alpha_{11}, \alpha_{10}, \alpha_9, \alpha_8, \alpha_7\}$ and there is a single type-D component $\sigma_2=\{\alpha_5, \alpha_4, \alpha_3, \alpha_2, \alpha_1\}$. 
There are also two type-A components on the bottom:  $\sigma_3=\{\alpha_{14}, \alpha_{13}, \alpha_{12}, \alpha_{11}\} \text{ and }
\sigma_4=\{\alpha_9, \alpha_8, \alpha_7, \alpha_6, \alpha_5, \alpha_4, \alpha_3, \alpha_2\}.$ 
For the type-A components, computations similar to those in Example \ref{ASpectrum}, yield the multisets:  
$$\mathcal{E}(\sigma_1) = \{-2^2, -1^7, 0^{11}, 1^{11}, 2^7, 3^2\}, ~~~~ \mathcal{E}(\sigma_3) = \{-1^2, 0^4, 1^4, 2^2\}, ~~\text{and}$$
$$\mathcal{E}(\sigma_4) = \{-2^2, -1^7, 0^{11}, 1^{11}, 2^7, 3^2\}. $$

For the type-D component, the positive roots for the computation of $\mathcal{E}(\sigma_2)$ are elements of the set 
$$\hspace*{-.5cm}B_{\sigma_2}=\left\{
\begin{array}{c}
\alpha_5, \alpha_4, \alpha_3, \alpha_2, \alpha_1, 
\alpha_5+\alpha_4, \alpha_4+\alpha_3, \alpha_3+\alpha_2, \alpha_3+\alpha_1, \\
\alpha_5+\alpha_4+\alpha_3, \alpha_4+\alpha_3+\alpha_2, \alpha_4+\alpha_3+\alpha_1, \alpha_3+\alpha_2+\alpha_1, \\
\alpha_5+\alpha_4+\alpha_3+\alpha_2, \alpha_5+\alpha_4+\alpha_3+\alpha_1, \alpha_4+\alpha_3+\alpha_2+\alpha_1, \alpha_4+2\alpha_3+\alpha_2+\alpha_1, \\
\alpha_5+\alpha_4+\alpha_3+\alpha_2+\alpha_1, \alpha_5+\alpha_4+2\alpha_3+\alpha_2+\alpha_1, \alpha_5+2\alpha_4+2\alpha_3+\alpha_2+\alpha_1
\end{array}
\right\}.$$
Applying each of $\beta\in B_{\sigma_2}$ to $\widehat{F}$ yields the multiset 
$$\{-2^2, -1^3, 0^3, 1^7, 2^3, 3^2\}.$$  
\noindent 
Since, $|\sigma_2|=5$, we have by equation (\ref{SD3}) that
$$\mathcal{E}(\sigma_2) = \{-2^2, -1^3, 0^3, 1^7, 2^3, 3^2\}\cup\{0^4\} = \{-2^2, -1^3, 0^7, 1^7, 2^3, 3^2\}.$$
\end{example}

\begin{lemma}\label{Union}
If $\mf{p}(\pi_1\dd \pi_2)$ is a Frobenius seaweed, then the
multisets of eigenvalues contributed by each maximally
connected component form a multiset partition of the spectrum
of the seaweed.
\end{lemma}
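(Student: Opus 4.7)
The plan is to verify that $\bigsqcup_{\sigma}\mathcal{E}(\sigma)$ matches the full spectrum of $\ad\widehat{F}$ as a multiset. Since the spectrum is independent of the choice of principal element, one may take $\widehat{F}\in\mf{h}$, so that the seaweed decomposes as a direct sum of $\ad\widehat{F}$-eigenspaces,
\[
\mf{p}(\pi_1\dd\pi_2)=\mf{h}\oplus\bigoplus_{\beta\in \mathbb{N}\pi_1\cap\Delta_+}\mf{g}_{\beta}\oplus\bigoplus_{\beta\in \mathbb{N}\pi_2\cap\Delta_+}\mf{g}_{-\beta},
\]
with respective eigenvalues $0$ (of multiplicity $n$), $\beta(\widehat{F})$, and $-\beta(\widehat{F})$. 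Because a root of $\mf{g}$ has connected support in the Dynkin diagram, the positive roots split as $\mathbb{N}\pi_j\cap\Delta_+=\bigsqcup_{\sigma\subseteq\pi_j}(\mathbb{N}\sigma\cap\Delta_+)$, so the ``root'' portions of the $\mathcal{E}(\sigma)$ in (\ref{S1})--(\ref{SD3}) account exactly for the non-Cartan part of the spectrum. The remaining task is to show that the total ``extra zero'' multiplicity appended to the $\mathcal{E}(\sigma)$ equals $n$.

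Let $c(\sigma)$ denote this extra-zero count. A direct case analysis using the description of $i_j$ in Section~\ref{seaweeds} shows that $c(\sigma)$ equals the number of $i_j$-orbits on $\sigma$: in type $A_k$, the involution $\alpha_i\mapsto\alpha_{k+1-i}$ has $\lceil k/2\rceil$ orbits; in types B, C, and $D_k$ with $k$ even, $i_j=\mathrm{id}$ produces $|\sigma|$ singleton orbits; and in $D_k$ with $k$ odd, $i_j$ swaps $\alpha_1\leftrightarrow\alpha_2$ while fixing the other simple roots, giving $|\sigma|-1$ orbits. Combining the involution-count identity $|i_j\text{-orbits on }\pi_j|=(|\pi_j|+|\mathrm{Fix}(i_j)\cap\pi_j|)/2$ with $|\pi_1|+|\pi_2|=n+|\pi_1\cap\pi_2|$ (from $\pi_1\cup\pi_2=\Pi$), the desired equality $\sum_{\sigma}c(\sigma)=n$ reduces to the identity
\[
|\mathrm{Fix}(i_1)\cap\pi_1|+|\mathrm{Fix}(i_2)\cap\pi_2|=|\pi_\cup|.
\]

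To establish this identity I would invoke Theorem~\ref{thm:frobenius}, which guarantees exactly $|\pi_\cup|$ orbits of $\langle i_1i_2\rangle$ on $\Pi$. Viewing the orbit meander as a graph on $\Pi$ with each vertex incident to one $i_1$-bond and one $i_2$-bond (loops at fixed points), its connected components are precisely the $\langle i_1i_2\rangle$-orbits; each component is either an isolated vertex (fixed by both $i_1$ and $i_2$) or a path whose edges alternate between $i_1$- and $i_2$-type. Since every element of $\pi_\cup$ is automatically fixed by one of $i_1,i_2$ and interior vertices of a path carry no loops, the unique $\pi_\cup$-element of each orbit must be either that isolated vertex or a path endpoint. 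A short case analysis then shows each orbit contributes exactly $1$ to $|\mathrm{Fix}(i_1)\cap\pi_1|+|\mathrm{Fix}(i_2)\cap\pi_2|$: an isolated $\pi_\cup$-vertex in $\pi_j$ contributes via $\mathrm{Fix}(i_j)\cap\pi_j$, while the non-$\pi_\cup$ endpoint of a non-trivial path necessarily lies in $\pi_1\cap\pi_2$ and is fixed by exactly one $i_j$. Summing over the $|\pi_\cup|$ orbits yields the identity.

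The main obstacle is this orbit-structure analysis: one must carefully verify that the Frobenius hypothesis rules out cycle components of the orbit meander (a cycle would contain no fixed vertex and hence no $\pi_\cup$-element, violating Theorem~\ref{thm:frobenius}) and that the parity of each path length correctly assigns its non-$\pi_\cup$ endpoint to the appropriate $\mathrm{Fix}(i_j)\cap\pi_j$.
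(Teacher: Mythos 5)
The paper states Lemma~\ref{Union} without any proof --- it is asserted and then applied to the running examples --- so there is no argument of the authors' to compare yours against; what you have written supplies the missing justification, and it is correct. The decomposition $\mf{p}(\pi_1\dd\pi_2)=\mf{h}\oplus\bigoplus_{\beta\in\mathbb{N}\pi_1\cap\Delta_+}\mf{g}_{\beta}\oplus\bigoplus_{\beta\in\mathbb{N}\pi_2\cap\Delta_+}\mf{g}_{-\beta}$ together with connectedness of root supports disposes of the non-Cartan part at once (note the paper's $\mathbb{N}\sigma$, despite the ``$c_i>0$'' in its definition, must be read as ``supported on $\sigma$'' rather than ``full support,'' as the example computations of $B_{\sigma_3}$ confirm --- your reading is the intended one). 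The genuine content is the zero count, and your bookkeeping is right: the appended multiplicities $\lceil|\sigma|/2\rceil$, $|\sigma|$, $|\sigma|-1$ in (\ref{S1})--(\ref{SD3}) (and $4,7,8,4,2$ in the exceptional cases) are in every case the number of $i_j$-orbits on $\sigma$, so the total reduces via the involution identity and $|\pi_1|+|\pi_2|=n+|\pi_1\cap\pi_2|$ to $|\mathrm{Fix}(i_1)\cap\pi_1|+|\mathrm{Fix}(i_2)\cap\pi_2|=|\pi_\cup|$, which your endpoint analysis of the orbit meander delivers: Theorem~\ref{thm:frobenius} forbids cycle components (their $\langle i_1i_2\rangle$-orbits would miss $\pi_\cup$), every component is then a path whose $\langle i_1,i_2\rangle$-orbit coincides with its $\langle i_1i_2\rangle$-orbit, each path has exactly one endpoint in $\pi_\cup$ (contributing $0$) and one in $\pi_1\cap\pi_2$ carrying exactly one loop (contributing $1$), and each isolated vertex lies in $\pi_\cup$ and contributes $1$ through the other involution because $\pi_1\cup\pi_2=\Pi$. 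The only point worth stating explicitly in a final write-up is the one you already flag: the identification of graph components with $\langle i_1i_2\rangle$-orbits is legitimate only after cycles have been excluded, so that exclusion must come first in the argument rather than being an afterthought.
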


We apply Lemma \ref{Union} 
to obtain the spectrum for each running example.  We take the union of the sets $\mathcal{E}(\sigma_i)$ found in Examples \ref{ASpectrum} - \ref{ex:Dspectrum}.  
This data is consolidated in Tables \ref{tab:Aeigenvalue} - \ref{tab:Deigenvalue}.  

\begin{table}[H]
\[\begin{tabular}{|l||l|l|l|l|l|l|l|l|l|l|}
\hline
Eigenvalue& -4 & -3 & -2 & -1 & 0 & 1 & 2 & 3 & 4 & 5 \\
\hline
Multiplicity & 1 & 2 & 3 & 6 & 10 & 10 & 6 & 3 & 2 & 1 \\
\hline
\end{tabular}\]
\caption{Spectrum of $\mf{p}_{9}^\A(\Upsilon_1 \dd \Upsilon_2)$ with multiplicities}
\label{tab:Aeigenvalue}
\end{table}

\begin{table}[H]
\[\begin{tabular}{|l||l|l|l|l|l|l|}
\hline
Eigenvalue& -2 & -1 & 0 & 1 & 2 & 3 \\
\hline
Multiplicity & 1 & 5 & 12 & 12 & 5 & 1 \\
\hline
\end{tabular}\]
\caption{Spectrum of $\mf{p}_{8}^\B(\Pi_1 \dd \Pi_2)$ with multiplicities}
\label{tab:Beigenvalue}
\end{table}

\begin{table}[H]
\[\begin{tabular}{|l||l|l|l|l|l|l|}
\hline
Eigenvalue& -2 & -1 & 0 & 1 & 2 & 3 \\
\hline
Multiplicity & 1 & 2 & 15 & 15 & 2 & 1 \\
\hline
\end{tabular}\]
\caption{Spectrum of $\mf{p}_{8}^\C(\Pi_1 \dd \Pi_2)$ with multiplicities}
\label{tab:eigenvalue}
\end{table}

\begin{table}[H]
\[\begin{tabular}{|l||l|l|l|l|l|l|}
\hline
Eigenvalue& -2 & -1 & 0 & 1 & 2 & 3 \\
\hline
Multiplicity & 6 & 19 & 33 & 33 & 19 & 6 \\
\hline
\end{tabular}\]
\caption{Spectrum of $\mf{p}_{14}^\D(\Psi_1 \dd \Psi_2)$ with multiplicities}
\label{tab:Deigenvalue}
\end{table}

The following example illustrates a Frobenius type-D seaweed containing an even-sized type-D component. 

\begin{example}\label{DEven}
Define the seaweed $\mf{p}_{11}^\D(\Phi_1 \dd \Phi_2)$ by the following sets:
\begin{center}$\Phi_1 = \{\alpha_{11}, \alpha_{10}, \alpha_9, \alpha_8, \alpha_7, \alpha_6, \alpha_4, \alpha_3, \alpha_2, \alpha_1\},$\\ and\\
$\Phi_2 = \{\alpha_{10}, \alpha_9, \alpha_7, \alpha_6, \alpha_5, \alpha_4, \alpha_3, \alpha_2\}.$  
\end{center}
See Figure \ref{fig:DEvensimple eigenvalues} for the simple eigenvalues of $\mf{p}_{11}^\D(\Phi_1 \dd \Phi_2)$.  
\end{example}

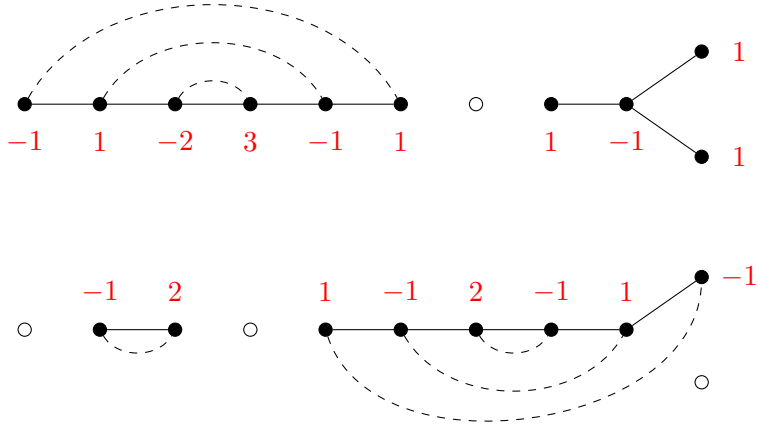
\begin{figure}[H]
\[\begin{tikzpicture}
[decoration={markings,mark=at position 0.6 with 
{\arrow{angle 90}{>}}}]

\draw (4,-3) node[draw,circle,fill=white,minimum size=5pt,inner sep=0pt] (4+) {};
\draw (5,-3) node[draw,circle,fill=black,minimum size=5pt,inner sep=0pt] (5+) {};
\draw (6,-3) node[draw,circle,fill=black,minimum size=5pt,inner sep=0pt] (6+) {};
\draw (7,-3) node[draw,circle,fill=white,minimum size=5pt,inner sep=0pt] (7+) {};
\draw (8,-3) node[draw,circle,fill=black,minimum size=5pt,inner sep=0pt] (8+) {};
\draw (9,-3) node[draw,circle,fill=black,minimum size=5pt,inner sep=0pt] (9+) {};
\draw (10,-3) node[draw,circle,fill=black,minimum size=5pt,inner sep=0pt] (10+) {};
\draw (11,-3) node[draw,circle,fill=black,minimum size=5pt,inner sep=0pt] (11+) {};
\draw (12,-3) node[draw,circle,fill=black,minimum size=5pt,inner sep=0pt] (12+) {};
\draw (13,-2.3) node[draw,circle,fill=black,minimum size=5pt,inner sep=0pt] (13+) {};
\draw (13,-3.7) node[draw,circle,fill=white,minimum size=5pt,inner sep=0pt] (14+) {};

\draw (4,0) node[draw,circle,fill=black,minimum size=5pt,inner sep=0pt] (4-) {};
\draw (5,0) node[draw,circle,fill=black,minimum size=5pt,inner sep=0pt] (5-) {};
\draw (6,0) node[draw,circle,fill=black,minimum size=5pt,inner sep=0pt] (6-) {};
\draw (7,0) node[draw,circle,fill=black,minimum size=5pt,inner sep=0pt] (7-) {};
\draw (8,0) node[draw,circle,fill=black,minimum size=5pt,inner sep=0pt] (8-) {};
\draw (9,0) node[draw,circle,fill=black,minimum size=5pt,inner sep=0pt] (9-) {};
\draw (10,0) node[draw,circle,fill=white,minimum size=5pt,inner sep=0pt] (10-) {};
\draw (11,0) node[draw,circle,fill=black,minimum size=5pt,inner sep=0pt] (11-) {};
\draw (12,0) node[draw,circle,fill=black,minimum size=5pt,inner sep=0pt] (12-) {};
\draw (13,.7) node[draw,circle,fill=black,minimum size=5pt,inner sep=0pt] (13-) {};
\draw (13,-.7) node[draw,circle,fill=black,minimum size=5pt,inner sep=0pt] (14-) {};

\draw (4-) to (9-);
\draw (11-) to (12-);
\draw (12-) to (13-);
\draw (12-) to (14-);
\draw (5+) to (6+);
\draw (8+) to (12+);
\draw (12+) to (13+);

\node at (5,-2.5) [color=red] {$-1$};
\node at (6,-2.5) [color=red] {$2$};
\node at (8,-2.5) [color=red] {$1$};
\node at (9,-2.5) [color=red] {$-1$};
\node at (10,-2.5) [color=red] {$2$};
\node at (11,-2.5) [color=red] {$-1$};
\node at (12,-2.5) [color=red] {$1$};
\node at (13.5,-2.3) [color=red] {$-1$};

\node at (4,-.5) [color=red] {$-1$};
\node at (5,-.5) [color=red] {$1$};
\node at (6,-.5) [color=red] {$-2$};
\node at (7,-.5) [color=red] {$3$};
\node at (8,-.5) [color=red] {$-1$};
\node at (9,-.5) [color=red] {$1$};
\node at (11,-.5) [color=red] {$1$};
\node at (12,-.5) [color=red] {$-1$};
\node at (13.5,.7) [color=red] {$1$};
\node at (13.5,-.7) [color=red] {$1$};

\draw [dashed] (5+) to [bend right=60] (6+);
\draw [dashed] (8+) to [bend right=80] (13+);
\draw [dashed] (9+) to [bend right=60] (12+);
\draw [dashed] (10+) to [bend right=60] (11+);

\draw [dashed] (4-) to [bend left=60] (9-);
\draw [dashed] (5-) to [bend left=60] (8-);
\draw [dashed] (6-) to [bend left=60] (7-);

;\end{tikzpicture}\]
\caption{The simple eigenvalues of \small{$\mf{p}_{11}^\D(\Phi_1 \dd \Phi_2)$}}
\label{fig:DEvensimple eigenvalues}
\end{figure}

\noindent
The eigenvalues for $\mf{p}_{11}^\D(\Phi_1 \dd \Phi_2)$ can be found using computations similar to those in Example \ref{ex:Dspectrum} and are given in Table \ref{tab:DEveneigenvalue}.  

\begin{table}[H]
\[\begin{tabular}{|l||l|l|l|l|l|l|}
\hline
Eigenvalue& -2 & -1 & 0 & 1 & 2 & 3 \\
\hline
Multiplicity & 2 & 9 & 23 & 23 & 9 & 2 \\
\hline
\end{tabular}\]
\caption{Spectrum of $\mf{p}_{11}^\D(\Phi_1 \dd \Phi_2)$ with multiplicities}
\label{tab:DEveneigenvalue}
\end{table}

Notice that in Tables \ref{tab:Aeigenvalue} - \ref{tab:DEveneigenvalue}, $\mathcal{E}(\sigma_i)$ is symmetric about one half for each $i$.  
Furthermore, the multiplicities form a symmetric distribution. 
As the following theorem shows,  these observations are not coincidences. 

\begin{theorem}\label{thm:symmetry}
Let $\mf{p}(\pi_1\dd \pi_2)$ be a Frobenius seaweed.
For each maximally connected component $\sigma$ of $\pi_1$ or $\pi_2$,
let $r_i$ be the multiplicity of the eigenvalue $i$ in $\mathcal{E}(\sigma)$. 
The sequence $(i)$ is symmetric about one-half.  Moreover, $r_{-i} = r_{i+1}$ for each eigenvalue $i$.
\end{theorem}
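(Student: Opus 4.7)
The theorem concerns $\mathcal{E}(\sigma)$ for a single maximally connected component, so I would argue case-by-case on the Dynkin type of $\sigma$, establishing that the multiset of eigenvalues is invariant under the involution $x\mapsto 1-x$ on integers (equivalently, $r_{-i}=r_{i+1}$). Since the formulas in Lemma \ref{eigenvalue table} depend on the Dynkin type, the argument naturally splits into cases (type A; types B, C, $D_{2k}$; type $D_{2k+1}$).

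\textbf{Type A.} Here the symmetry is almost immediate from the machinery already in place. Enumerate the positive roots $\alpha=\alpha_j+\cdots+\alpha_i$ of a type-$A_k$ component and split them into (i) the $\lceil k/2\rceil$ self-symmetric roots with $j+i=k+1$, each contributing eigenvalue $1$ by Theorem \ref{typeAsymmetricpositiveroot}, and (ii) the remaining roots, which pair into symmetric couples $\{\alpha,\overline{\alpha}\}$ contributing $\{x,\,1-x\}$ to the multiset by equation (\ref{SymmPos}). The paired contributions are already invariant under $x\mapsto 1-x$, and the $\lceil k/2\rceil$ ones from self-symmetric roots balance precisely the $\lceil k/2\rceil$ zeros appended in (\ref{S1}), producing $\lceil k/2\rceil$ copies of the symmetric pair $\{0,1\}$. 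Thus $r_{-i}=r_{i+1}$ for all $i$.

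\textbf{Types B, C, D.} For these I would compute the eigenvalue of each positive root $\beta=\sum c_i\alpha_i$ from Lemma \ref{eigenvalue table} and construct an explicit involution on positive roots whose orbits are either self-fixed (contributing the appropriate "central" eigenvalue) or pairs $\{\beta,\iota(\beta)\}$ with $\beta(\widehat{F})+\iota(\beta)(\widehat{F})=1$. The cleanest case is type $C_k$: since $\alpha_1(\widehat{F})=1$ and $\alpha_i(\widehat{F})=0$ for $i\geq 2$, the eigenvalue of $\beta$ equals $c_1\in\{0,1\}$, and a direct count together with the $|\sigma|$ padding zeros in (\ref{S2}) gives $r_0=r_1=\binom{k+1}{2}$ and $r_i=0$ otherwise. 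Types B and D exploit the alternating $\pm 1$ patterns in Lemma \ref{eigenvalue table}: pair each positive root to the one obtained by "reflecting" its simple-root expansion (with separate handling of the exceptional fork $\{\alpha_1,\alpha_2\}$ in $D_{2k+1}$), then verify term-by-term that $\beta(\widehat{F})+\iota(\beta)(\widehat{F})=1$; the padding zeros in (\ref{S2}) or (\ref{SD3}) absorb the imbalance between $0$ and $1$ produced by the self-fixed orbits, exactly as in the $B_3$-subcomponent of the type-B running example.

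\textbf{Main obstacle.} The hardest step is constructing and verifying the explicit involution in types $B_k$ and $D_k$, where (unlike in type A) the positive roots involve coefficients larger than $1$ on certain simple roots (namely $\alpha_1$ in types B and the central nodes $\alpha_3,\alpha_4,\dots$ in type D). The case $D_{2k+1}$ is additionally delicate because Lemma \ref{eigenvalue orbit} gives only the coupled relation $\alpha_1(\widehat{F})+\alpha_2(\widehat{F})=0$ rather than individual values, and this is precisely what forces the reduced padding count $|\sigma|-1$ in (\ref{SD3}). A careful bookkeeping of coefficients against the alternating simple-eigenvalue patterns, cross-validated with the running examples tabulated in Tables \ref{tab:Beigenvalue}--\ref{tab:DEveneigenvalue}, should confirm the involution in every case and close the argument.
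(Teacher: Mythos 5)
Your Type-A argument is exactly the paper's: pair each root with its symmetric root via equation (\ref{SymmPos}), and match the $\lceil k/2\rceil$ self-symmetric roots (eigenvalue $1$) against the $\lceil k/2\rceil$ appended zeros. Your Type-C observation that every eigenvalue equals the coefficient $c_1\in\{0,1\}$ also reproduces the paper's count $r_0=r_1=\binom{k+1}{2}$. So for types A and C you are on the paper's track.

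For types B and D, however, there is a genuine gap: the involution $\iota$ with $\beta(\widehat{F})+\iota(\beta)(\widehat{F})=1$ is the entire content of the claim in those cases, and you neither define it precisely nor verify it --- you only assert that ``reflecting the simple-root expansion'' plus ``careful bookkeeping'' should work. Since the existence of such a pairing (together with the absorption of the surplus by the padding zeros) is \emph{equivalent} to $r_{-i}=r_{i+1}$, deferring its construction defers the proof. The difficulty is real: the roots with doubled coefficients ($2\alpha_i+\dots+2\alpha_1$ in type B, $2\alpha_i+\dots+2\alpha_3+\alpha_2+\alpha_1$ in type D) have no obvious ``reflection'' partner among the coefficient-one roots, and in type $D_{2k+1}$ the answer further depends on which of the two admissible values $\alpha_2(\widehat{F})$ takes, which your sketch does not address. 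The paper avoids constructing any involution at all: it first observes that only the eigenvalues $-1,0,1,2$ (types B, C, even D) or $-2,\dots,3$ (odd D) can occur, then computes each multiplicity $r_j$ by an explicit closed-form sum over the parametrized families of positive roots, splitting type D odd into the subcases $\alpha_2(\widehat{F})=-2$ and $\alpha_2(\widehat{F})=-1$, and reads off $r_{-i}=r_{i+1}$ from the resulting formulas. If you want to keep the involution framing, you would still have to carry out essentially this enumeration to check that your pairing is well defined and exhaustive, so the direct count is the shorter road.
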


\begin{proof}
We prove this in the case
that $\sigma$ is a maximally connected component of $\pi_1$.
The case that $\sigma$ is a maximally connected component of $\pi_2$
is similar and is omitted.

\setcounter{case}{0}
\begin{case}\label{symmetryA}
Type A
\end{case}
Suppose $\sigma$ is of type $A_k$.  
Then $\mathcal{E}(\sigma)$ is comprised of elements of the form 
\[\alpha_j(\widehat{F})+\alpha_{j-1}(\widehat{F})+\dots +\alpha_i(\widehat{F}),\]
where $k\geq j\geq i\geq 1$ along with $\lc \frac{k}{2}\rc$ zeros per Equation (\ref{S1}).  

Each element $\alpha(\widehat{F})$ in $\mathcal{E}(\sigma)$ with $j+i \neq k+1$ has a symmetric eigenvalue $\overline{\alpha}$ with $\alpha(\widehat{F})+\overline{\alpha}(\widehat{F}) = 1$.  
Moreover, there are $\lc \frac{k}{2}\rc$ positive roots 
$\alpha_j + \alpha_{j-1} ... + \alpha_i$ with $k \geq j \geq i \geq 1$ and $j+i = k+1$.  These satisfy
\[\alpha_j(\widehat{F})+\alpha_{j-1}(\widehat{F})+\dots +\alpha_i(\widehat{F})=1\]
 and are in bijective correspondence with the zeros from Equation (\ref{S1}).  Therefore, the multiset of eigenvalues from $\sigma$ is symmetric about one-half.  

\begin{case}\label{symmetryB}
Type B
\end{case}
Suppose $\sigma$ is of type B, with odd cardinality.
For convenience, reorder the indices of the simple roots
so that $\sigma=\{\alpha_{2k-1},\alpha_{2k-2},\dots ,\alpha_{1}\}$ as in 
Lemma \ref{eigenvalue table}.
Then $\mathcal{E}(\sigma)$ is comprised of elements of the form
\[\alpha_j(\widehat{F})+\alpha_{j-1}(\widehat{F})+\dots +\alpha_i(\widehat{F}),\]
where $2k-1\geq j\geq i\geq 1$, or
\[\alpha_{j}(\widehat{F})+\dots +\alpha_{i+1}(\widehat{F})+2\alpha_{i}(\widehat{F})+\dots +2\alpha_{1}(\widehat{F}),\]
where $2k-1\geq j>i\geq 2$, along with $2k-1$ zeros per Equation (\ref{S2}).

We need only consider eigenvalues $-1, 0, 1,$ and $2$, and these eigenvalues have multiplicities given by 

\[r_{-1}=\sum_{i=1}^{k-1}i=\binom{k}{2},\]
\[r_{0}=\left(\sum_{i=1}^{k-1}2i\right)+(2k-1)+\left(\sum_{i=1}^{k-2}i\right)
=\dfrac{k(3k-1)}{2},\]
\[r_{1}=\left(\sum_{i=1}^{k}i\right)+\left(\sum_{i=1}^{k-1}2i\right)
=\dfrac{k(3k-1)}{2},\]
\[r_{2}=\sum_{i=1}^{k-1}i=\binom{k}{2}.\]

If $\sigma$ is of type B with even cardinality,
we reorder the indices of the simple roots
so that $\sigma=\{\alpha_{2k},\dots ,\alpha_{1}\}$ as in 
Lemma \ref{eigenvalue table}. 
Then $\mathcal{E}(\sigma)$ is comprised of elements of the form
\[\alpha_j(\widehat{F})+\alpha_{j-1}(\widehat{F})+\dots +\alpha_i(\widehat{F}),\]
where $2k\geq j\geq i\geq 1$, or

\[\alpha_{j}(\widehat{F})+\dots +\alpha_{i+1}(\widehat{F})+2\alpha_{i}(\widehat{F})+\dots +2\alpha_{1}(\widehat{F}),\]
where $2k\geq j>i\geq 2$, along with $2k$ zeros per Equation (\ref{S2}).
Then

\[r_{-1}=\sum_{i=1}^{k-1}i=\binom{k}{2},\]
\[r_{0}=\left(\sum_{i=1}^{k}2(i-1)+1\right)+(2k)+\left(\sum_{i=1}^{k-1}i\right)
=3\binom{k+1}{2},\]
\[r_{1}=\left(\sum_{i=1}^{k}i+1\right)+(k)+\left(\sum_{i=1}^{k-1}2i\right)
=3\binom{k+1}{2},\]
\[r_{2}=\sum_{i=1}^{k-1}i=\binom{k}{2}.\]

\begin{case}\label{symmetryC}
Type C
\end{case}
Suppose $\sigma$ is of type C.  We again 
reorder the indices of the simple roots
so $\sigma=\{\alpha_{k},\alpha_{k-1},\dots ,\alpha_{1}\}$ as in 
Lemma \ref{eigenvalue table}.
Then $\mathcal{E}(\sigma)$ is comprised of elements of the form
\[\alpha_j(\widehat{F})+\alpha_{j-1}(\widehat{F})+\dots +\alpha_i(\widehat{F}),\]
where $k\geq j\geq i\geq 1$, or
\[\alpha_j(\widehat{F})+\dots +\alpha_{i+1}(\widehat{F})+2\alpha_{i}(\widehat{F})+\dots +2\alpha_{2}(\widehat{F})
+\alpha_1(\widehat{F}),\]
where $k\geq j\geq i\geq 2$, along with $k$ zeros per Equation (\ref{S2}).

Counting the multiplicity of the eigenvalue 0, we see that 
there are $\binom{k-1}{2}$ eigenvalues of the form 
$\alpha_j(\widehat{F})+\dots +\alpha_i(\widehat{F})$ where $k\geq j>i\geq 2$, 
$k-1$ eigenvalues of the form $\alpha_i(\widehat{F})$ where $k\geq i\geq 2$,
and $k$ zeros per Equation (\ref{S2}) included in $\mathcal{E}(\sigma)$.
Thus $r_0=\binom{k-1}{2}+(k-1)+k=\binom{k+1}{2}$.

Counting the multiplicity of the eigenvalue 1, we see that 
there are $k$ eigenvalues of the form 
$\alpha_j(\widehat{F})+\dots +\alpha_1(\widehat{F})$ where $k\geq j\geq 1$, 
$k-1$ eigenvalues of the form 
$2\alpha_i(\widehat{F})+\dots 2\alpha_{2}(\widehat{F})+\alpha_1(\widehat{F})$ where $k\geq i\geq 2$, and $\binom{k-1}{2}$ eigenvalues of the form 
$\alpha_j(\widehat{F})+\dots +\alpha_{i+1}(\widehat{F})+2\alpha_{i}(\widehat{F})+\dots +2\alpha_{2}(\widehat{F})
+\alpha_1(\widehat{F})$ where $k\geq j>i\geq 2$.
It follows that $r_1=\binom{k+1}{2}$ as well.

\begin{case}\label{symmetryD}
Type D
\end{case}

Suppose $\sigma$ is of type D, with odd cardinality.
For convenience, reorder the indices of the simple roots
so that $\sigma=\{\alpha_{2k+1},\alpha_{2k},\dots ,\alpha_{1}\}$ as in 
Lemma \ref{eigenvalue table}.
Then $\mathcal{E}(\sigma)$ is comprised of elements of the form
\[\alpha_j(\widehat{F})+\alpha_{j-1}(\widehat{F})+\dots +\alpha_i(\widehat{F}),\]
where $2k+1\geq j\geq i\geq 1$  except the case with $j = 2$ and $i = 1$, or
\[\alpha_j(\widehat{F})+\alpha_{j-1}(\widehat{F})+\dots + \alpha_{i+2}(\widehat{F})+\alpha_i(\widehat{F}),\]
where $2k+1 \geq j\geq i = 1$, or
\[\alpha_{j}(\widehat{F})+\dots +\alpha_{i+1}(\widehat{F})+2\alpha_{i}(\widehat{F})+\dots +2\alpha_{3}(\widehat{F})+\alpha_{2}(\widehat{F})+\alpha_{1}(\widehat{F}),\]
where $2k+1\geq j>i\geq 2$, along with $2k$ zeros per Equation (\ref{SD3}).

\noindent 
If $\alpha_2(\widehat{F})=-2$, then 

\[r_{-2}=\sum_{i=1}^{k}1=k,\]
\[r_{-1}=\sum_{i=1}^k 1+\sum_{i=1}^{k-1}i=\dfrac{k(k+1)}{2},\]
\[r_{0}=\left(\sum_{i=1}^{k-1}2i\right)+2k+\left(\sum_{i=1}^{k-1}i\right)
=\dfrac{k(3k+1)}{2},\]
\[r_{1}=\left(\sum_{i=0}^{k-1}2i+1\right)+\left(\sum_{i=1}^{k}i\right)
=\dfrac{k(3k+1)}{2},\]
\[r_{2}=\sum_{i=1}^k 1+\sum_{i=1}^{k-1}i=\dfrac{k(k+1)}{2},\]
\[r_{3}=\sum_{i=1}^{k}1=k.\]
If
$\alpha_2(\widehat{F})=-1$, then 
\[r_{-1}=\sum_{i=1}^k i=\dfrac{k(k+1)}{2},\]
\[r_{0}=\left(\sum_{i=0}^{k-1}2i+1\right)+2k+\sum_{i=1}^{k-1}i
=\dfrac{3k(k+1)}{2},\]
\[r_{1}=\sum_{i=1}^{k}i+\left(\sum_{i=1}^{k}2i\right)
=\dfrac{3k(k+1)}{2},\]
\[r_{2}=\sum_{i=1}^k i=\dfrac{k(k+1)}{2}.\]

Suppose $\sigma$ is of type D, with even cardinality.
For convenience, reorder the indices of the simple roots
so that $\sigma=\{\alpha_{2k},\dots ,\alpha_{1}\}$ as in 
Lemma \ref{eigenvalue table}. 
Then $\mathcal{E}(\sigma)$ is comprised of elements of the form
\[\alpha_j(\widehat{F})+\alpha_{j-1}(\widehat{F})+\dots +\alpha_i(\widehat{F}),\]
where $2k\geq j\geq i\geq 1$  except the case with $j = 2$ and $i = 1$, or
\[\alpha_j(\widehat{F})+\alpha_{j-1}(\widehat{F})+\dots + \alpha_{i+2}(\widehat{F})+\alpha_i(\widehat{F}),\]
where $2k \geq j\geq i = 1$, or
\[\alpha_{j}(\widehat{F})+\dots +\alpha_{i+1}(\widehat{F})+2\alpha_{i}(\widehat{F})+\dots +2\alpha_{3}(\widehat{F})+\alpha_{2}(\widehat{F})+\alpha_{1}(\widehat{F}),\]
where $2k\geq j>i\geq 2$, along with $2k$ zeros per Equation (\ref{S2}).
We then have
\[r_{-1}=\sum_{i=1}^{k-1}i=\dfrac{k(k-1)}{2},\]
\[r_{0}=\left(\sum_{i=1}^{k-1}2i\right)+2k+\left(\sum_{i=1}^{k-1}i\right)
=\dfrac{k(3k+1)}{2},\]
\[r_{1}=\left(\sum_{i=1}^{k}i\right)+\left(\sum_{i=0}^{k-1}2i+1\right)
=\dfrac{k(3k+1)}{2},\]
\[r_{2}=\sum_{i=1}^{k-1}i=\dfrac{k(k-1)}{2}.\]

\end{proof} 

We have the following corollary that will be used to prove the unbroken property.  

\begin{corollary}
Let $\mf{p}(\pi_1\dd \pi_2)$ be a Frobenius seaweed, and let $\sigma$ be a maximally connected component of type A, B, C, or D.  If $\mathcal{E}(\sigma)$ is an unbroken multiset, then $\mathcal{E}(\sigma) \cup [-\mathcal{E}(\sigma)]$ is an unbroken multiset.  Moreover, if $x$ is symmetric to any eigenvalue in $\mathcal{E}(\sigma)$, then $\mathcal{E}(\sigma) \cup \{x\}$ is an unbroken multiset.  
\end{corollary}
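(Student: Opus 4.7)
The plan is to leverage Theorem \ref{thm:symmetry}, which already guarantees that $\mathcal{E}(\sigma)$ is symmetric about one-half ($r_{-i}=r_{i+1}$ for every eigenvalue $i$), and to combine that symmetry with the unbroken hypothesis to pin down the support of $\mathcal{E}(\sigma)$ as an integer interval of a very specific form. Once this is done, both assertions reduce to simple interval arithmetic.

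First I would observe that, since $\mathcal{E}(\sigma)$ is an unbroken multiset of integers and is symmetric about $\tfrac{1}{2}$ by Theorem \ref{thm:symmetry}, its support must take the form $\{a, a+1, \ldots, 1-a\}$ for some integer $a$, and the inequality $a \leq 1-a$ forces $a \leq 0$. For the first claim, note that $-\mathcal{E}(\sigma)$ then has support $\{a-1, a, \ldots, -a\}$. Because $a \leq 0 \leq -a$, the two intervals $[a, 1-a]$ and $[a-1, -a]$ overlap in $[a, -a]$, so their union is $\{a-1, a, \ldots, 1-a\}$, which is an unbroken set of integers. This establishes the first assertion.

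For the second claim, I would appeal directly to the defining relation (\ref{SymmPos}): saying that $x$ is symmetric to some eigenvalue $y \in \mathcal{E}(\sigma)$ means $x + y = 1$, so $x = 1 - y$. Since $y$ lies in the support $\{a, a+1, \ldots, 1-a\}$, the reflection $x = 1-y$ lies in the same interval. Consequently $x$ already belongs to the support of $\mathcal{E}(\sigma)$, and adjoining it only increases the multiplicity at $x$ without altering the support. Hence $\mathcal{E}(\sigma) \cup \{x\}$ remains unbroken.

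There is essentially no serious obstacle here; the content is just bookkeeping built on the symmetry furnished by Theorem \ref{thm:symmetry}. The only delicate point is verifying that the support of a nonempty symmetric unbroken multiset of integers centered at $\tfrac{1}{2}$ really must take the form $\{a, a+1, \ldots, 1-a\}$ with $a \leq 0$; this is immediate from the fact that an unbroken set of integers is an interval $[m,M]$ and that symmetry about $\tfrac{1}{2}$ forces $m + M = 1$.
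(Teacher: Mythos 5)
Your argument is correct: the paper states this corollary without proof (it is presented as an immediate consequence of Theorem \ref{thm:symmetry}), and your deduction---using the relation $r_{-i}=r_{i+1}$ to force the support of a nonempty unbroken $\mathcal{E}(\sigma)$ to be an integer interval $\{a,a+1,\dots,1-a\}$ with $a\le 0$, after which both claims are elementary interval arithmetic---is exactly the intended route. The one step worth recording explicitly is the endpoint computation you sketch at the end: if the support is $[m,M]$, then $r_M=r_{1-M}>0$ and $r_m=r_{1-m}>0$ give $m+M\le 1$ and $m+M\ge 1$, hence $M=1-m$.
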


\subsection{Unbroken}

A key concept for showing the spectrum is unbroken is a ``U-turn" in an orbit of the orbit meander.  
By U-turn, we mean an application of Lemma \ref{eigenvalue orbit} 
in the case that $(\alpha_i,i_1\alpha_i)<0$.  Since this case applies only to components of type A, if $(\alpha_i,i_1\alpha_i)<0$, then $\alpha_i$ and $i_1\alpha_i$ must be adjacent. It follows that the type-A component must have an even number of vertices. 
To see this, note that in the orbit meander, $\alpha_i$ and $i_1\alpha_i$ are connected by a dashed edge.  A component of type A with an odd number of vertices does not have any adjacent vertices connected by a dashed edge since the middle vertex is isolated. For example, the orbit meander in Figure \ref{fig:simple eigenvalues} has two U-turns: one in the orbit 
$\{\alpha_7,\alpha_6,\alpha_8\}$ and one in the orbit $\{\alpha_4,\alpha_3\}$.  We will find it convenient to break U-turns into two types of U-turns: \textit{right U-turns} and \textit{left U-turns}.  

Now, arrange all orbits so that the first entry is a fixed point in $\pi_1\cap\pi_2$.
If a U-turn involves a dashed edge
from $v_i^-$ to $v_{i-1}^-$, or a dashed edge from $v_i^+$ to $v_{i+1}^+$, we call this a right U-turn.  
Similarly, if a U-turn involves a dashed edge
from $v_i^-$ to $v_{i+1}^-$, or a dashed edge from $v_i^+$ to $v_{i-1}^+$, we call this a left U-turn.  See Figure \ref{fig:uturns}; the right U-turn is represented by a red dashed arc, and the left U-turns are blue.

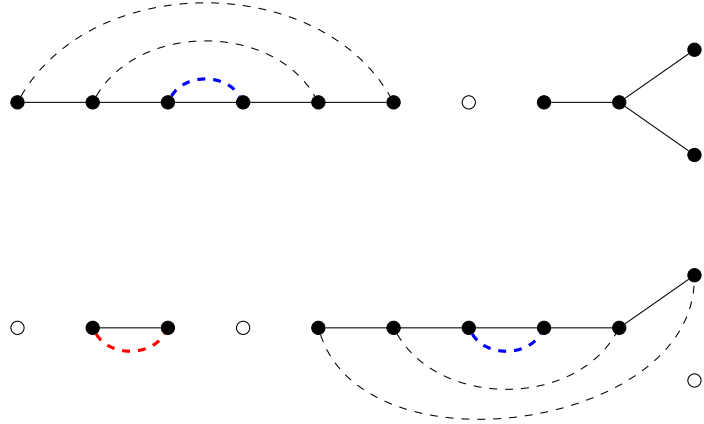
\begin{figure}[H]
\[\begin{tikzpicture}
[decoration={markings,mark=at position 0.6 with 
{\arrow{angle 90}{>}}}]

\draw (4,-3) node[draw,circle,fill=white,minimum size=5pt,inner sep=0pt] (4+) {};
\draw (5,-3) node[draw,circle,fill=black,minimum size=5pt,inner sep=0pt] (5+) {};
\draw (6,-3) node[draw,circle,fill=black,minimum size=5pt,inner sep=0pt] (6+) {};
\draw (7,-3) node[draw,circle,fill=white,minimum size=5pt,inner sep=0pt] (7+) {};
\draw (8,-3) node[draw,circle,fill=black,minimum size=5pt,inner sep=0pt] (8+) {};
\draw (9,-3) node[draw,circle,fill=black,minimum size=5pt,inner sep=0pt] (9+) {};
\draw (10,-3) node[draw,circle,fill=black,minimum size=5pt,inner sep=0pt] (10+) {};
\draw (11,-3) node[draw,circle,fill=black,minimum size=5pt,inner sep=0pt] (11+) {};
\draw (12,-3) node[draw,circle,fill=black,minimum size=5pt,inner sep=0pt] (12+) {};
\draw (13,-2.3) node[draw,circle,fill=black,minimum size=5pt,inner sep=0pt] (13+) {};
\draw (13,-3.7) node[draw,circle,fill=white,minimum size=5pt,inner sep=0pt] (14+) {};

\draw (4,0) node[draw,circle,fill=black,minimum size=5pt,inner sep=0pt] (4-) {};
\draw (5,0) node[draw,circle,fill=black,minimum size=5pt,inner sep=0pt] (5-) {};
\draw (6,0) node[draw,circle,fill=black,minimum size=5pt,inner sep=0pt] (6-) {};
\draw (7,0) node[draw,circle,fill=black,minimum size=5pt,inner sep=0pt] (7-) {};
\draw (8,0) node[draw,circle,fill=black,minimum size=5pt,inner sep=0pt] (8-) {};
\draw (9,0) node[draw,circle,fill=black,minimum size=5pt,inner sep=0pt] (9-) {};
\draw (10,0) node[draw,circle,fill=white,minimum size=5pt,inner sep=0pt] (10-) {};
\draw (11,0) node[draw,circle,fill=black,minimum size=5pt,inner sep=0pt] (11-) {};
\draw (12,0) node[draw,circle,fill=black,minimum size=5pt,inner sep=0pt] (12-) {};
\draw (13,.7) node[draw,circle,fill=black,minimum size=5pt,inner sep=0pt] (13-) {};
\draw (13,-.7) node[draw,circle,fill=black,minimum size=5pt,inner sep=0pt] (14-) {};

\draw (4-) to (9-);
\draw (11-) to (12-);
\draw (12-) to (13-);
\draw (12-) to (14-);
\draw (5+) to (6+);
\draw (8+) to (12+);
\draw (12+) to (13+);

\draw [dashed, color=red, line width = 1.2pt] (5+) to [bend right=60] (6+);
\draw [dashed] (8+) to [bend right=80] (13+);
\draw [dashed] (9+) to [bend right=60] (12+);
\draw [dashed, color=blue, line width = 1.2pt] (10+) to [bend right=60] (11+);

\draw [dashed] (4-) to [bend left=60] (9-);
\draw [dashed] (5-) to [bend left=60] (8-);
\draw [dashed, color=blue, line width = 1.2pt] (6-) to [bend left=60] (7-);

;\end{tikzpicture}\]
\caption{The orbit meander of $\mf{p}_{11}^\D(\Phi_1 \dd \Phi_2)$ with U-turns highlighted}
\label{fig:uturns}
\end{figure}

 The orbit meander for $\mf{p}_{11}^\D(\Phi_1 \dd \Phi_2)$ has four orbits:
$$\mathcal{O}_1 = \{\alpha_2, \alpha_7, \alpha_{10}, \alpha_9, \alpha_8\}, \quad 
\mathcal{O}_2 = \{\alpha_3, \alpha_6, \alpha_{11}\}, \quad 
\mathcal{O}_3 = \{\alpha_4, \alpha_5\}, ~~ \text{and} ~~ 
\mathcal{O}_4 = \{\alpha_1\}.$$
\noindent
Note that $\mathcal{O}_2$ and $\mathcal{O}_4$ contain no U-turns, $\mathcal{O}_1$ has two U-turns (a right and a left) and $\mathcal{O}_3$ has a single left U-turn.
As it turns out, an orbit cannot have more than two U-turns.  
Moreover, if an orbit has two U-turns, one must be a right U-turn and one must be a left U-turn.  
We record this in the following Lemma.

\begin{lemma}[U-turn Lemma]\label{lem:Dsimple}
An orbit in an orbit meander contains at most two U-turns.  Moreover, if an orbit contains two U-turns, one must be a right U-turn and one must be a left U-turn.
\end{lemma}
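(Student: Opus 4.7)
My plan is to trace each orbit as a walk through the orbit meander and exploit the nested (non-crossing) structure of the dashed edges within each type-A component to bound the number of U-turns.

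First I would record the following structural observations. Within a single type-A component, the action of $i_j$ is the reversal involution on the simple roots, so the dashed edges within a component form a nested matching. U-turns arise only from the innermost pair of such a matching, so there is at most one U-turn per component and only when the component has even cardinality. The orbit walk, built by alternating top and bottom dashed edges with vertical segments between $v_i^+$ and $v_i^-$ at each visited position, can be drawn in the plane with all top arcs above the top row and all bottom arcs below the bottom row; since the dashed edges on each side are non-crossing, the walk traces out a non-self-intersecting curve that starts and ends at fixed-point endpoints of $i_1$ or $i_2$, at least one of which lies in $\pi_{\cup}$.

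For the key step, I would interpret right and left U-turns as opposite local rotations of the walk's tangent direction --- a right U-turn is a counterclockwise half-turn whether on top or on bottom, while a left U-turn is a clockwise half-turn --- and then argue as follows. If an orbit contained two U-turns of the same orientation, say both right, then the portion of the walk strung between them would accumulate two counterclockwise half-turns. Because the intermediate non-U-turn arcs are forced by non-crossing to be nested with the U-turn arcs, the walk would be compelled either to self-intersect (violating the non-crossing of the meander) or to close into a loop before reaching the second endpoint (violating the fact that the walk is a simple open path with distinct endpoints). The symmetric argument rules out two left U-turns, and pigeonhole then rules out three or more U-turns altogether, since any such configuration contains two of the same orientation.

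The main obstacle is the case analysis. U-turns occur on either the top or the bottom, and the ``right'' designation corresponds to opposite spatial movements on the two sides, so the crossing-versus-closure dichotomy must be made precise by examining how the intermediate walk interacts with the nested dashed-edge structure --- in particular, which enclosed positions are visited by the walk and in what order. Organizing these sub-cases (same-component versus different-component U-turns, and top versus bottom placements) cleanly, and verifying the impossibility in each, is where the heavy lifting lies.
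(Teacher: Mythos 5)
Your proposal is essentially the paper's own argument: the paper likewise traces each orbit as a simple planar walk starting from its fixed point in $\pi_1\cap\pi_2$, observes that U-turns occur only at the innermost dashed edge of an even type-A component, and rules out a second same-orientation U-turn on the grounds that the orbit would self-intersect and never terminate at its unique element of $\pi_{\cup}$. Your rotation-number gloss and the pigeonhole reduction (no two same-orientation U-turns yields both claims at once) are cosmetic repackagings of that same mechanism, so the two proofs coincide in substance.
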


\begin{proof}

The orbit meander for a Frobenius type-A seaweed has exactly two maximally connected
components of even size, so an orbit can have at most two U-turns. For a seaweed of type B, C, or D, it is possible to 
have more than two maximally connected components of even size.
Indeed, start from a fixed point contained in $\pi_1\cap\pi_2$.
Without loss of generality, we proceed with the proof in the case that the first U-turn is a right U-turn.

The orbit may make a second U-turn. However, it cannot make a second
right U-turn, as the orbit would self-intersect 
and never terminate at an element of $\pi_\cup$. Therefore, a second U-turn must be to the left. But now the orbit has
previously traced edges on both the left and right. Additional U-turns
are not allowed.  
\end{proof}





Simple eigenvalues can increase in absolute value by one only through a U-turn.  This observation together with the U-turn Lemma gives us the following critical result.

\begin{theorem}\label{DSimple}
Let $\mf{g}$ be a Frobenius seaweed of type A or type D.  If $\widehat{F}$ is a principal element of $\mf{g}$ and $\alpha_i(\widehat{F})$ is a simple eigenvalue, then $$\alpha_i(\widehat{F}) \in \{-2,-1,0,1,2,3\}.$$
In particular, the simple eigenvalues are bounded in absolute value by three.
\end{theorem}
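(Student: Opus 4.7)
The plan is to trace each orbit in the orbit meander and bound how large $|\alpha_i(\widehat{F})|$ can grow along the way, with Lemma \ref{lem:Dsimple} (the U-turn Lemma) as the main combinatorial tool. I would start by translating Lemma \ref{eigenvalue orbit} into propagation rules along dashed edges: on a dashed edge in a type-A component of $\pi_1$, the simple eigenvalue transforms as $x \mapsto -x$ (no U-turn) or $x \mapsto 1-x$ (U-turn); on a type-A component of $\pi_2$, the analogous rules are $x \mapsto -x$ and $x \mapsto -1-x$. For non-type-A vertices, which in a type-A or type-D seaweed can only arise inside a single type-D component, Lemma \ref{eigenvalue table} produces values directly in $\{-1,0,1\}$.

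Next, I would anchor each orbit at its unique $\pi_{\cup}$-element (guaranteed by Theorem \ref{thm:frobenius}) and propagate values outward along the orbit. Sign-flip steps preserve $|x|$, so the magnitude grows only at U-turns; since Lemma \ref{lem:Dsimple} forces at most two U-turns per orbit, a preliminary bound of $|\alpha_i(\widehat{F})| \leq 3$ drops out immediately: a U-turn applied to a value of $\pm 1$ yields one of $\{-2,0,2\}$, and a U-turn applied to a value in $\{-2,0,2\}$ yields one of $\{-3,-1,0,1,3\}$.

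To sharpen the range to $\{-2,-1,0,1,2,3\}$, that is, to exclude the value $-3$, I would invoke the second clause of Lemma \ref{lem:Dsimple}: if an orbit has two U-turns, one is a right U-turn and one is a left U-turn. The only way to produce $-3$ is a bottom U-turn applied to $+2$. A case analysis keyed on the side (top vs.\ bottom) and direction (right vs.\ left) of each U-turn, together with orbit closure and the alternation of dashed edges between top and bottom, should show that no admissible orbit can feed a $+2$ into a bottom U-turn of the permitted orientation.

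The delicate step is this final case analysis. It requires keeping careful track of how the anchor's starting sign interacts with the sign-flips interleaved between the two U-turns, and then verifying in each combination of side and direction that the right-left constraint forbids precisely the configuration producing $-3$ while still permitting $+3$. The earlier steps are essentially mechanical applications of Lemmas \ref{eigenvalue table}, \ref{eigenvalue orbit}, and \ref{lem:Dsimple}; the real obstacle lies in enumerating these cases without missing an exception.
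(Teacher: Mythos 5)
Your first two steps are exactly the paper's (very terse) argument: the paper justifies Theorem~\ref{DSimple} only by the observation that a simple eigenvalue can change in absolute value solely at a U-turn, combined with Lemma~\ref{lem:Dsimple}, and your propagation rules ($x\mapsto -x$ off U-turns, $x\mapsto 1-x$ and $x\mapsto -1-x$ at top and bottom U-turns) are the correct formalization of Lemma~\ref{eigenvalue orbit}, giving $|\alpha_i(\widehat{F})|\leq 3$ just as you say. One small inaccuracy along the way: Table~\ref{tab:simple eigenvalue} does \emph{not} determine $\alpha_1(\widehat{F})$ and $\alpha_2(\widehat{F})$ in an odd type-D component; those are governed by the third clause of Lemma~\ref{eigenvalue orbit} (a sign flip) and can carry propagated values of magnitude $2$ (see $\alpha_2$ in Figure~\ref{fig:Dsimple eigenvalues}), so your claim that non-type-A vertices get values ``directly in $\{-1,0,1\}$'' is false as stated, though harmless for the coarse bound.

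The genuine gap is your endgame. The configuration you plan to rule out --- a value $+2$ feeding a U-turn of the ``wrong'' row --- actually occurs, in the paper's own Example~\ref{DEven}: the orbit $\mathcal{O}_1=\{\alpha_2,\alpha_7,\alpha_{10},\alpha_9,\alpha_8\}$ makes one U-turn at the pair $(\alpha_{10},\alpha_9)$ and a second, on the opposite row, at $(\alpha_9,\alpha_8)$ (Figure~\ref{fig:uturns}); in the orientation of this seaweed in which the second U-turn lies in a component of $\pi_2$, Lemma~\ref{eigenvalue orbit} forces the literal value $\alpha_8(\widehat{F})=-1-2=-3$. No right/left case analysis can forbid this, so a proof built on excluding it would fail. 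What saves the theorem is the sign convention of Lemma~\ref{eigenvalue table}: a ``simple eigenvalue'' on a $\pi_2$ component is $-\alpha_i(\widehat{F})$, which here equals $+3$ (the value recorded in Figure~\ref{fig:DEvensimple eigenvalues}), and $\alpha_8$ belongs to no component of $\pi_1$ because it is the orbit's $\pi_{\cup}$ endpoint. In the row-normalized values, \emph{both} U-turn types act by $y\mapsto 1-y$ and every anchored value is at most $1$, so two U-turns can produce at most $3$ and never less than $-2$ on the rows where roots actually lie; the statement that then still requires proof is not a top/bottom restriction on the two U-turns but a termination claim --- that the far vertex of the second U-turn is the orbit's unique $\pi_{\cup}$ element, since any continuation through a vertex of $\pi_1\cap\pi_2$ would place $-3$ on the opposite row. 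Your right/left dichotomy from Lemma~\ref{lem:Dsimple} is the natural tool for that claim, but as aimed, your case analysis targets the wrong configuration.
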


\begin{remark}
Theorem \ref{DSimple} combined with the results of \textbf{\cite{Coll typea}} gives us the following complete table of possible simple eigenvalues for each classical case.  

\begin{table}[H]
\[\begin{tabular}{|c|c|}
\hline
Component type & Possible values of $\alpha_i(\widehat{F})$  \\
\hline
\hline
A & 
$-2, -1, 0, 1, 2, 3$ \\
\hline
B & $-1, 0, 1$  \\
\hline
C & $0,1$\\
\hline
D & $-2,-1,0,1,2,3$\\
\hline
\end{tabular}\]
\caption{Values of $\alpha_i(\widehat{F})$}
\label{tab:Dsimple eigenvalue}
\end{table}
\end{remark}




To ease discourse, we note the following bases for the subsequent induction before  detailing the winding-up moves.
These bases are established in \textbf{\cite{DIndex}} and follow from a general combinatorial formula for the index of a type-D seaweed. 

\begin{theorem}\label{typeDIndBases}
The following seaweeds are the bases for induction.   
\begin{enumerate}[\textup(i\textup)]
\item $D_{2k}$ with $k \geq 2$ is
$\mf{p}_{2k}^D(\{\alpha_{2k}, \alpha_{2k-1}, ...,\alpha_1\}
\dd \emptyset)$.  
\item $D_{2k+1}$ with $k \geq 1$  $\mf{p}_{2k+1}^D(\{\alpha_{2k+1}, \alpha_{2k}, ...,\alpha_1\} \dd \{\alpha_2\})$, and
\item $D_{2k+1}$ with $k \geq 1$ $\mf{p}_{2k+1}^D(\{\alpha_{2k+1}, \alpha_{2k}, ...,\alpha_1\} \dd \{\alpha_3, \alpha_2\})$.  
\end{enumerate}
\end{theorem}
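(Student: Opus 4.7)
The plan is to verify each of the three listed families is Frobenius by directly applying Joseph's orbit criterion (Theorem \ref{thm:frobenius}), and then to invoke the combinatorial index formula of \cite{DIndex} to confirm that no winding-up reduction can further simplify any of them, so they are the genuine terminal cases of the induction. I would handle the three cases in sequence, computing the $\langle i_1 i_2\rangle$-orbits and intersecting each with $\pi_\cup$.

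For (i), since $\pi_1 = \Pi$ in type $D_{2k}$ has longest Weyl element $w_1 = -\mathrm{id}$, the involution $i_1$ fixes every simple root; and $i_2 = \mathrm{id}$ because $\pi_2 = \emptyset$. Thus every orbit is a singleton, and $\pi_\cup = \Pi$, so each orbit meets $\pi_\cup$ in exactly one element. For (ii), with $\pi_1 = \Pi$ in type $D_{2k+1}$, the odd-$D$ formula makes $i_1$ swap $\alpha_1 \leftrightarrow \alpha_2$ and fix $\alpha_i$ for $i \geq 3$, while $i_2 = \mathrm{id}$ on simple roots since $\pi_2 = \{\alpha_2\}$ is of type $A_1$. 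The orbits are $\{\alpha_1,\alpha_2\}$ and $\{\alpha_i\}_{i \geq 3}$, each of which meets $\pi_\cup = \Pi \setminus \{\alpha_2\}$ exactly once (at $\alpha_1$, respectively $\alpha_i$). For (iii), $\pi_2 = \{\alpha_2,\alpha_3\}$ is of type $A_2$, whose longest element gives $i_2(\alpha_2) = \alpha_3$ and $i_2(\alpha_3) = \alpha_2$. Composing with $i_1$ produces the 3-cycle $\alpha_1 \to \alpha_2 \to \alpha_3 \to \alpha_1$ together with singletons $\{\alpha_i\}_{i \geq 4}$; with $\pi_\cup = \Pi \setminus \{\alpha_2,\alpha_3\}$, the 3-cycle contributes only $\alpha_1$ while each singleton contributes its own element of $\pi_\cup$. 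Hence all three families are Frobenius.

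To upgrade this to the stronger claim that these are the \emph{bases} of the induction, I would appeal to the closed-form type-D index formula of \cite{DIndex}: one checks that the formula yields index zero only on configurations conjugate to one of (i)--(iii) or on those which admit a winding-up reduction to a seaweed of smaller rank. The main obstacle I anticipate is the case analysis for the fork in $D_{2k+1}$, where one must verify that no single family subsumes both (ii) and (iii)---that is, that attaching $\pi_2$ to one short leg of the fork versus both yields inequivalent irreducible Frobenius configurations, neither of which can be further reduced. Once that bookkeeping is complete, the three families are exactly the terminal leaves of the inductive tree used in the remainder of the paper.
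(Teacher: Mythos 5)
Your orbit computations are correct: in (i) the even-rank longest element gives $i_1=\mathrm{id}$ and $i_2=\mathrm{id}$, so all orbits are singletons meeting $\pi_\cup=\Pi$ once; in (ii) $i_1$ swaps $\alpha_1\leftrightarrow\alpha_2$ while $i_2=\mathrm{id}$ (since $-w_0=\mathrm{id}$ on an $A_1$ component), giving the orbit $\{\alpha_1,\alpha_2\}$ plus singletons; and in (iii) composing the $A_2$ flip on $\{\alpha_2,\alpha_3\}$ with the fork swap produces the $3$-cycle $\alpha_1\to\alpha_2\to\alpha_3\to\alpha_1$, each orbit again meeting $\pi_\cup$ exactly once. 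This is a genuine verification, via Theorem \ref{thm:frobenius}, that the paper does not write out at all: the paper's entire justification of Theorem \ref{typeDIndBases} is the sentence preceding it, which cites the index formula of \cite{DIndex}. So your route is more explicit on the Frobenius property, and in that respect it adds content rather than diverging from the paper.

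Where you and the paper are in the same boat is the substantive half of the statement, namely that these three families are the \emph{bases} — i.e., that every Frobenius type-D orbit meander is reachable from one of them by the winding-up moves of Lemma \ref{Expansion}, equivalently that nothing else is terminal under winding-down. Your proposal reduces this to ``one checks that the formula yields index zero only on configurations conjugate to one of (i)--(iii) or on those which admit a winding-up reduction,'' which is precisely the claim being asserted, not a proof of it; the paper likewise delegates it wholesale to \cite{DIndex}. That is acceptable if the citation is taken as given, but you should be aware that the completeness/terminality claim is the real content here and that your Frobenius check, while correct and worth recording, does not touch it. Your remark about distinguishing (ii) from (iii) at the fork is a sensible sanity check (they are indeed inequivalent, since $\pi_1\cap\pi_2$ has different cardinality), but it is a small piece of the case analysis, not the main obstacle.
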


To prove the unbroken property, we will find it convenient to express these seaweeds in terms of sequences of flags defining them.   Let $C_{\leq n}$ denote the set of sequences of positive integers whose sum 
is less than or equal to $n$, and call each integer in the string a \textit{part}.
Let $\mathcal{P}(X)$ denote the power set of a set $X$.
Given $\ul{a}=(a_1,a_2,\dots ,a_m)\in \Cn$, define a bijection
$\varphi:\Cn\rightarrow \mathcal{P}(\Pi)$ by
\[\varphi(\ul{a})=\{\alpha_{n+1-a_1},\alpha_{n+1-(a_1+a_2)},
\dots ,\alpha_{n+1-(a_1+a_2+\dots +a_m)}\}.\]
Then define \[\mf{p}_n(\ul{a} \dd \ul{b})=
\mf{p}\left(\Pi\setminus\varphi(\ul{a}) \dd \Pi\setminus\varphi(\ul{b})\right),\]
and let $\mathcal{M}_n(\ul{a} \dd \ul{b})$ denote the orbit meander of 
$\mf{p}_n(\ul{a} \dd \ul{b})$.

\begin{example}
For example,
$$\mf{p}_8^\C(\{\alpha_6,\alpha_5,\alpha_4,\alpha_3\}
\dd \{\alpha_8,\alpha_7,\alpha_6,\alpha_4,
\alpha_3,\alpha_2,\alpha_1\})=
\mf{p}^\C_8((1,1,5,1)\dd (4)).$$
\end{example}

We now write the bases for induction from Theorem \ref{typeDIndBases} using sequences of integers.  

\begin{theorem}\label{typeDSpectrumFrobParabolicBlocks}
The following seaweeds are the bases for the induction.   
\begin{enumerate}[\textup(i\textup)$^{\prime}$]
\item $\mf{p}_q^\D\left( 1^q \dd \emptyset \right)$ with $q$ even,
\item $\mf{p}_q^\D\left( 1^{q-2}, 2 \dd \emptyset \right)$ with $q$ odd,
\item $\mf{p}_q^\D\left( 1^{q-3}, 3 \dd \emptyset \right)$ with $q$ odd.
\end{enumerate}
\end{theorem}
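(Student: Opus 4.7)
The plan is to verify Theorem~\ref{typeDSpectrumFrobParabolicBlocks} by a direct translation: apply the bijection $\varphi: \Cn \to \mathcal{P}(\Pi)$ to each integer sequence in (i)$'$, (ii)$'$, (iii)$'$ and show that the resulting standard seaweed agrees, up to the $\pi_1 \leftrightarrow \pi_2$ swap $\mathfrak{p}(\pi_1 \dd \pi_2) \mapsto \mathfrak{p}(\pi_2 \dd \pi_1)$, with the corresponding item of Theorem~\ref{typeDIndBases}. The swap produces a Lie-algebra isomorphic seaweed (via the Chevalley anti-involution exchanging $\mathfrak{u}_+$ and $\mathfrak{u}_-$), hence has the same Frobenius status, the same orbit meander (up to top-bottom reflection), and the same spectrum; so either form is equally valid as an induction base.

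First I would handle (i)$'$. For the sequence $\underline{a} = 1^q$ the partial sums are $1,2,\ldots,q$, giving
\[
\varphi(1^q) = \{\alpha_{q+1-1}, \alpha_{q+1-2}, \ldots, \alpha_{q+1-q}\} = \{\alpha_q, \alpha_{q-1}, \ldots, \alpha_1\} = \Pi.
\]
Since $\varphi(\emptyset) = \emptyset$, we get $\mathfrak{p}_q^\D(1^q \dd \emptyset) = \mathfrak{p}(\emptyset \dd \Pi)$, which after swap is $\mathfrak{p}(\Pi \dd \emptyset)$, i.e.\ item (i) of Theorem~\ref{typeDIndBases} (requiring $q = 2k$ with $k \geq 2$).

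Next I would handle (ii)$'$ and (iii)$'$ similarly. For $\underline{a} = (1^{q-2}, 2)$ the partial sums are $1, 2, \ldots, q-2, q$, so
\[
\varphi(1^{q-2}, 2) = \{\alpha_q, \alpha_{q-1}, \ldots, \alpha_3, \alpha_1\},
\]
giving $\pi_1 = \Pi \setminus \varphi(1^{q-2}, 2) = \{\alpha_2\}$ and $\pi_2 = \Pi$. After the swap this is $\mathfrak{p}(\Pi \dd \{\alpha_2\})$, which is item (ii) of Theorem~\ref{typeDIndBases} for $q = 2k+1$, $k \geq 1$. Likewise for $\underline{a} = (1^{q-3}, 3)$ the partial sums are $1, 2, \ldots, q-3, q$, yielding
\[
\varphi(1^{q-3}, 3) = \{\alpha_q, \alpha_{q-1}, \ldots, \alpha_4, \alpha_1\},
\]
so that $\pi_1 = \{\alpha_3, \alpha_2\}$ and $\pi_2 = \Pi$, matching item (iii).

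There is essentially no obstacle here: the nontrivial content — that these are legitimate base cases for a type-D induction, i.e.\ that they are Frobenius and minimal in the relevant sense — is already established in~\textbf{\cite{DIndex}} and quoted as Theorem~\ref{typeDIndBases}. The only thing to check carefully is bookkeeping of the partial sums under $\varphi$ and the observation that the $\pi_1 \leftrightarrow \pi_2$ swap preserves the Frobenius property and the spectrum, both of which are immediate from the symmetric role of $\pi_1$ and $\pi_2$ in Theorem~\ref{thm:frobenius} and in the orbit-meander construction.
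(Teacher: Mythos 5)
Your proposal is correct and matches what the paper does implicitly: Theorem \ref{typeDSpectrumFrobParabolicBlocks} is offered without proof as a direct rewriting of Theorem \ref{typeDIndBases} in the $\varphi$-sequence notation, and your unwinding of the partial sums reproduces exactly that translation. Your one extra observation --- that the result of $\varphi$ is the $\pi_1\leftrightarrow\pi_2$ swap of the stated bases, which is harmless --- is also consistent with the paper, since the Flip-up move of Lemma \ref{Expansion} is explicitly noted there to produce an inverted isomorphic copy with the same spectrum.
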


\begin{example}\label{Dbases}
The following Figure \ref{typedbase} illustrates the inductive bases of Theorem \ref{typeDSpectrumFrobParabolicBlocks}.  
\end{example}

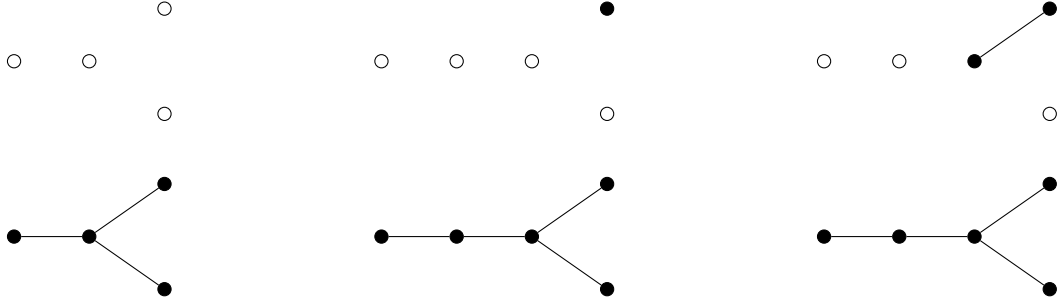
\begin{figure}[H]
$$
\begin{tikzpicture}
[decoration={markings,mark=at position 0.6 with 
{\arrow{angle 90}{>}}}]

\draw (2,.75) node[draw,circle,fill=white,minimum size=5pt,inner sep=0pt] (2+) {};
\draw (3,.75) node[draw,circle,fill=white,minimum size=5pt,inner sep=0pt] (3+) {};
\draw (4,1.45) node[draw,circle,fill=white,minimum size=5pt,inner sep=0pt] (4+) {};
\draw (4,.05) node[draw,circle,fill=white,minimum size=5pt,inner sep=0pt] (5+) {};



;\end{tikzpicture}
\hspace{7em}
\begin{tikzpicture}
[decoration={markings,mark=at position 0.6 with 
{\arrow{angle 90}{>}}}]

\draw (1,.75) node[draw,circle,fill=white,minimum size=5pt,inner sep=0pt] (1+) {};
\draw (2,.75) node[draw,circle,fill=white,minimum size=5pt,inner sep=0pt] (2+) {};
\draw (3,.75) node[draw,circle,fill=white,minimum size=5pt,inner sep=0pt] (3+) {};
\draw (4,1.45) node[draw,circle,fill=black,minimum size=5pt,inner sep=0pt] (4+) {};
\draw (4,.05) node[draw,circle,fill=white,minimum size=5pt,inner sep=0pt] (5+) {};



;\end{tikzpicture}
\hspace{7em}
\begin{tikzpicture}
[decoration={markings,mark=at position 0.6 with 
{\arrow{angle 90}{>}}}]

\draw (1,.75) node[draw,circle,fill=white,minimum size=5pt,inner sep=0pt] (1+) {};
\draw (2,.75) node[draw,circle,fill=white,minimum size=5pt,inner sep=0pt] (2+) {};
\draw (3,.75) node[draw,circle,fill=black,minimum size=5pt,inner sep=0pt] (3+) {};
\draw (4,1.45) node[draw,circle,fill=black,minimum size=5pt,inner sep=0pt] (4+) {};
\draw (4,.05) node[draw,circle,fill=white,minimum size=5pt,inner sep=0pt] (5+) {};

\draw (3+) to (4+);


;\end{tikzpicture}
$$

$$
\begin{tikzpicture}
[decoration={markings,mark=at position 0.6 with 
{\arrow{angle 90}{>}}}]

\draw (2,.75) node[draw,circle,fill=black,minimum size=5pt,inner sep=0pt] (2+) {};
\draw (3,.75) node[draw,circle,fill=black,minimum size=5pt,inner sep=0pt] (3+) {};
\draw (4,1.45) node[draw,circle,fill=black,minimum size=5pt,inner sep=0pt] (4+) {};
\draw (4,.05) node[draw,circle,fill=black,minimum size=5pt,inner sep=0pt] (5+) {};

\draw (2+) to (3+);
\draw (3+) to (4+);
\draw (3+) to (5+);


;\end{tikzpicture}
\hspace{7em}
\begin{tikzpicture}
[decoration={markings,mark=at position 0.6 with 
{\arrow{angle 90}{>}}}]

\draw (1,.75) node[draw,circle,fill=black,minimum size=5pt,inner sep=0pt] (1+) {};
\draw (2,.75) node[draw,circle,fill=black,minimum size=5pt,inner sep=0pt] (2+) {};
\draw (3,.75) node[draw,circle,fill=black,minimum size=5pt,inner sep=0pt] (3+) {};
\draw (4,1.45) node[draw,circle,fill=black,minimum size=5pt,inner sep=0pt] (4+) {};
\draw (4,.05) node[draw,circle,fill=black,minimum size=5pt,inner sep=0pt] (5+) {};

\draw (1+) to (2+);
\draw (2+) to (3+);
\draw (3+) to (4+);
\draw (3+) to (5+);


;\end{tikzpicture}
\hspace{7em}
\begin{tikzpicture}
[decoration={markings,mark=at position 0.6 with 
{\arrow{angle 90}{>}}}]

\draw (1,.75) node[draw,circle,fill=black,minimum size=5pt,inner sep=0pt] (1+) {};
\draw (2,.75) node[draw,circle,fill=black,minimum size=5pt,inner sep=0pt] (2+) {};
\draw (3,.75) node[draw,circle,fill=black,minimum size=5pt,inner sep=0pt] (3+) {};
\draw (4,1.45) node[draw,circle,fill=black,minimum size=5pt,inner sep=0pt] (4+) {};
\draw (4,.05) node[draw,circle,fill=black,minimum size=5pt,inner sep=0pt] (5+) {};

\draw (1+) to (2+);
\draw (2+) to (3+);
\draw (3+) to (4+);
\draw (3+) to (5+);


;\end{tikzpicture}
$$

\caption{Type-D inductive bases with $q=4$ and $q=5$}
\label{typedbase}
\end{figure}

If $a_1+\dots +a_m=n$, then each part $a_i$ corresponds to a 
maximally connected component $\sigma$ of cardinality $|\sigma|=a_i-1$,
all of which are of type A.
If $a_1+\dots +a_m=r<n$, then each part $a_i$ corresponds to a 
type-A maximally connected component $\sigma$ of cardinality $|\sigma|=a_i-1$,
and there is one additional maximally connected component of cardinality $n-r$,
which is of type B, C, or D if $n-r>1$.

The following ``Winding-up" lemma can be used to develop any Frobenius orbit meander of any size or configuration.   It can be regarded as the inverse graph-theoretic rendering of Panyushev's well-known reduction \textbf{\cite{Panyushev1}}.

\begin{lemma}[Coll et al. \textbf{\cite{Meanders3}}, Lemma 4.2]\label{Expansion}
Let $\mathcal{M}_n(\ul{c} \dd \ul{d})$ be any type-B or type-C Frobenius
orbit meander, and without loss
of generality, assume that $\sum c_i=q+\sum d_i=n$.
Then $\mathcal{M}_n(\ul{c} \dd \ul{d})$ is the result of a sequence of the 
following moves starting from $\mathcal{M}_q(1^q \dd \emptyset)$.
Starting with an orbit meander 
$\mathcal{M}=\mathcal{M}_n\left(a_{1},a_{2}, \dots ,a_{m}\dd b_{1},b_{2}, \dots ,b_{t}\right)$, 
create an orbit meander $\mathcal{M}'$ by one of of the following:

\begin{enumerate}
\item {\bf Block Creation:}
$\displaystyle
\mathcal{M}'=\mathcal{M}_{n+a_1}(2a_{1},a_{2}, \dots ,a_{m}\dd a_1, b_{1},b_{2}, \dots ,b_{t})$,
\item {\bf Rotation Expansion:}
$\displaystyle
\mathcal{M}'=\mathcal{M}_{n+a_1-b_1}(2a_{1}-b_1,a_{2},a_3, \dots ,a_{m}\dd a_1, b_{2},b_{3}, \dots ,b_{t})$, provided that $a_1>b_1$,
\item {\bf Pure Expansion:}
$\displaystyle
\mathcal{M}'=\mathcal{M}_{n+a_2}(a_{1}+2a_2,a_{3},a_4, \dots ,a_{m}\dd a_2,b_{1},b_{2}, \dots ,b_{t})$,
\item {\bf Flip-Up:}
$\displaystyle
\mathcal{M}'=\mathcal{M}_{n}(b_{1},b_{2}, \dots ,b_{t}\dd a_{1},a_{2}, \dots ,a_{m})$.
\end{enumerate}
Similarly, any type-D Frobenius orbit meander is the result of a sequence of the same moves but starting from one of the bases given in Theorem \ref{typeDSpectrumFrobParabolicBlocks}.  
\end{lemma}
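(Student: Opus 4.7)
The plan is to establish the lemma by induction on a suitable complexity measure, say the sum of the number of parts in $\ul{c}$ and $\ul{d}$, with ties broken by the rank $n$. The base case is $\mathcal{M}_q(1^q \dd \emptyset)$ itself, which trivially satisfies the statement. For the inductive step, I need to show that any non-base Frobenius orbit meander $\mathcal{M}_n(\ul{c} \dd \ul{d})$ of type B or C is the image, under one of the four moves, of a strictly smaller Frobenius orbit meander to which the induction hypothesis applies.

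The core of the argument is a case analysis on the relationship between the leading parts $c_1$ and $d_1$. If $c_1 > 2d_1$, then $\mathcal{M}_n(\ul{c} \dd \ul{d})$ is the output of a Pure Expansion applied to the meander with flags $(c_1 - 2d_1,\, d_1,\, c_2,\, \ldots)$ on top and $(d_2,\, \ldots)$ on the bottom. If $c_1 = 2d_1$, it is the output of a Block Creation applied to $(d_1,\, c_2,\, \ldots) \dd (d_2,\, \ldots)$. If $d_1 < c_1 < 2d_1$, it is the output of a Rotation Expansion applied to $(d_1,\, c_2,\, \ldots) \dd (2d_1 - c_1,\, d_2,\, \ldots)$. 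Finally, if $c_1 \le d_1$ and we are not already at the base, a Flip-Up falls into one of the three preceding cases. In each instance the reduced meander has strictly smaller complexity, and running the corresponding forward move recovers $\mathcal{M}_n(\ul{c} \dd \ul{d})$.

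The two essential verifications are (i) that each inverse reduction preserves the Frobenius property, and (ii) that the case analysis is exhaustive with Flip-Up not applied indefinitely. The first follows from Theorem \ref{thm:frobenius}: one tracks how each reduction modifies the $\langle i_1 i_2 \rangle$-orbit structure and checks that every orbit continues to contain exactly one element of $\pi_\cup$. Since the three non-trivial moves all strictly decrease the complexity measure, termination is immediate.

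The main obstacle is the Frobenius-preservation analysis near the exceptional root in types B, C, and D, where the action of $i_j$ is not simply $-\mathrm{id}$ (as flagged for $D_k$ with $k$ odd in Section \ref{seaweeds}). One must verify case-by-case that the combinatorial moves interact correctly with the exceptional component, so that reducing never spoils an orbit anchored at the distinguished root. A further subtlety is ensuring that the type of the meander (B versus C, as distinguished by the orientation of the double bond at the exceptional root) is preserved throughout the reduction; this is what forces the base case to be specifically $\mathcal{M}_q(1^q \dd \emptyset)$ rather than some more general low-rank configuration, and this matching is what permits the analogous statement (beginning from the type-D bases of Theorem \ref{typeDSpectrumFrobParabolicBlocks}) to go through in the type-D setting by the same scheme.
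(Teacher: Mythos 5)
You should first note that the paper does not prove Lemma \ref{Expansion} at all: it is imported verbatim from \cite{Meanders3} (Lemma 4.2 there), and the only commentary the present paper offers is the remark that the winding-up moves are the inverse graph-theoretic rendering of Panyushev's reduction \cite{Panyushev1}. So there is no in-paper argument to compare yours against; what can be assessed is whether your reconstruction is sound. Your overall strategy is the expected one, and your inversion of the moves is arithmetically correct: reading off the leading parts from the four moves, Pure Expansion produces $c_1=a_1+2a_2$, $d_1=a_2$ (so $c_1>2d_1$ and the preimage is $(c_1-2d_1,d_1,c_2,\dots\dd d_2,\dots)$), Block Creation produces $c_1=2d_1$, and Rotation Expansion produces $c_1=2a_1-b_1$, $d_1=a_1$ with $a_1>b_1$, i.e. exactly the window $d_1<c_1<2d_1$. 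So the three inverse reductions are correctly identified and do strictly decrease rank or part count.

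The gaps are in the two places you yourself flag as ``essential verifications'' but then do not carry out. First, exhaustiveness: your trichotomy $c_1>2d_1$, $c_1=2d_1$, $d_1<c_1<2d_1$ presupposes that a non-base meander has a nonempty $\ul{d}$ and that $c_1\neq d_1$; after a Flip-Up the roles swap, so if $c_1=d_1$ no case ever applies, and if $\ul{d}=\emptyset$ while $\ul{c}\neq 1^q$ there is no $d_1$ to compare against. Both situations must be excluded using the Frobenius criterion of Theorem \ref{thm:frobenius} (e.g., $c_1=d_1$ forces an $\langle i_1i_2\rangle$ orbit violating the one-element-of-$\pi_\cup$ condition), and this is precisely the kind of orbit-level bookkeeping your sketch defers. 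Second, and more seriously, the claim that each inverse reduction preserves the Frobenius property and that the reduction terminates exactly at $\mathcal{M}_q(1^q\dd\emptyset)$ (respectively at one of the three type-D bases of Theorem \ref{typeDSpectrumFrobParabolicBlocks}) is the actual content of the lemma; you correctly locate the difficulty at the exceptional root, where $i_j$ need not act as $-\mathrm{id}$, but you give no argument there. In particular, the type-D statement is not ``the same scheme'' applied mechanically: one must show the reduction cannot bottom out at any Frobenius configuration other than the three listed bases, which is where the parity distinctions $D_{2k}$ versus $D_{2k+1}$ enter. As a proof the proposal is therefore incomplete, though its skeleton matches the known Panyushev-style argument.
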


\begin{remark}
While we previously found it convenient to order the vertices of an orbit meander from right to left, to ease notation in the following proof, we will find it convenient to relabel the vertices going from left to right.  That is, if an orbit meander is defined by vertices
$\{v_n^+, v_{n-1}^+, ..., v_1^+\}$ and 
$\{v_n^-, v_{n-1}^-, ..., v_1^-\}$, redefine the orbit meander by 

$$\{v_n^+, v_{n-1}^+, ..., v_1^+\} \mapsto \{w_1^+, w_{2}^+, ..., w_n^+\}$$
$$\{v_n^-, v_{n-1}^-, ..., v_1^-\} \mapsto \{w_1^-, w_{2}^-, ..., w_n^-\}$$
\noindent 
This is done so that the induction is done on the block whose leftmost vertex is labeled $w_1^+$ rather than $v_n^+$.  
\end{remark}

\begin{theorem}\label{thm:unbroken}
If $\mathcal{M}(\pi_1\dd\pi_2)=\mathcal{M}(\ul{a}\dd\ul{b})$ is any Frobenius orbit meander, 
then
$\mathcal{E}(\sigma)$ is unbroken for every maximally connected component $\sigma$. 
\end{theorem}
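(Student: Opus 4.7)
The plan is to proceed by induction on the number of winding-up moves used to construct the orbit meander, using Lemma~\ref{Expansion}. The base cases are $\mathcal{M}_q(1^q \dd \emptyset)$ for types B and C, and the three bases from Theorem~\ref{typeDSpectrumFrobParabolicBlocks} for type D. In each base case the Dynkin diagram of the meander consists of a single maximally connected component $\sigma$ (modulo already-handled A pieces), and the multiplicity counts $r_{-2},r_{-1},r_0,r_1,r_2,r_3$ worked out inside the proof of Theorem~\ref{thm:symmetry} are all strictly positive in the relevant range, which is precisely the unbroken statement for the base-case components.

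For the inductive step, I would dispatch each of the four winding-up moves separately. The key structural observation is that each move alters only the maximally connected component associated with the leftmost block being manipulated, while every other component is carried over essentially unchanged from the input meander and therefore still has unbroken $\mathcal{E}$ by the inductive hypothesis. The Flip-Up move merely swaps $\pi_1$ and $\pi_2$, negating every simple eigenvalue; the Corollary stated after Theorem~\ref{thm:symmetry} then preserves unbrokenness. For Block Creation, Rotation Expansion, and Pure Expansion the real task is to show that the newly constructed leftmost component has an unbroken spectrum. Here I would exploit three tools in combination: the U-turn Lemma~\ref{lem:Dsimple}, which caps each orbit at one right and one left U-turn; Theorem~\ref{DSimple} (together with Table~\ref{tab:Dsimple eigenvalue}), which bounds each simple eigenvalue in absolute value by at most three; and the symmetry result Theorem~\ref{thm:symmetry}, which already guarantees $r_{-i}=r_{i+1}$ for each $i$.

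More concretely, after invoking symmetry it suffices to produce, for every nonnegative integer $v$ up to $\max \mathcal{E}(\sigma)$, a positive root $\beta \in \mathbb{N}\sigma\cap\Delta_\pm$ with $\beta(\wh F)=v$. I would build such a $\beta$ by starting at an endpoint of the new block and extending the sum of consecutive simple roots one step at a time. Because each simple eigenvalue lies in a small bounded set, the running partial sum $\sum_{l=i}^j \alpha_l(\wh F)$ is an integer walk with step sizes at most three; the U-turn structure forces this walk to change sign pattern at most once on each side, so it behaves monotonically between U-turns, and by tracking the sign pattern around each U-turn one shows the walk visits every integer between its extremes, giving the unbroken property.

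The main obstacle I anticipate is Pure Expansion in type D. This move simultaneously enlarges a top block and appends a bottom block, and the simple eigenvalues of the resulting component are determined by a coupled application of Lemma~\ref{eigenvalue orbit} that mixes data from the expanded top block with fresh constraints imposed by the new bottom block. Showing that the resulting spectrum is not merely symmetric but also gap-free may require strengthening the inductive hypothesis to carry along concrete witnesses---specific positive roots $\beta$ realizing each extreme eigenvalue of the input component---so that the expansion can splice those witnesses with the newly created U-turns in a controlled way and thereby hit every intermediate integer without interruption.
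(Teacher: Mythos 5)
Your scaffolding matches the paper's: induction on the number of winding-up moves of Lemma \ref{Expansion}, the same bases (type A from $\mathcal{M}_1(1\dd\emptyset)$, types B/C from $\mathcal{M}_q(1^q\dd\emptyset)$, type D from Theorem \ref{typeDSpectrumFrobParabolicBlocks}), dismissal of Flip-Up, and the observation that only the leftmost components change. But the core of your inductive step --- the claim that the running partial sum $\sum_{l=i}^{j}\alpha_l(\widehat{F})$ anchored at an endpoint of the new block ``visits every integer between its extremes'' because the steps are bounded by three and the walk is monotone between U-turns --- is false, and it is false in exactly the situation the paper has to work hardest on. A monotone integer walk with steps of absolute value $2$ or $3$ skips integers; indeed, in Case 3 of the paper's proof (Pure Expansion) the authors explicitly note that when $|\alpha_i(\widehat{F})|=3$ for all $i$ in the relevant range, the set $\mathcal{E}(B'\cup E')\setminus\mathcal{E}(B')$ ``is not necessarily unbroken'' and its gaps all lie in a single residue class mod $3$. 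Those gaps are not filled by the walk itself; they are filled by observing that $\mathcal{E}(B'\cup E')\setminus\mathcal{E}(B')=-[\mathcal{E}(C'\cup F')\setminus\mathcal{E}(C')]$ and invoking symmetric roots from the opposite end of the same component. Your proposal has no mechanism to see these mod-$3$ gaps, let alone repair them.

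The deeper omission is that your argument never actually uses the inductive hypothesis where it matters. The paper's engine is a collection of multiset identities transporting old eigenvalue data into the new component --- e.g.\ $\mathcal{E}(A')=\mathcal{E}(A)$, $\mathcal{E}(B')=-\mathcal{E}(A)$, $\mathcal{E}(D')=\mathcal{E}(A)$ for Block Creation, and analogues for the other moves --- so that unbrokenness of the pieces $\mathcal{E}(A'\cup C')$, $\mathcal{E}(B'\cup C')$, etc.\ is inherited from the (full, all-interval-sums) unbroken multisets of $\mathcal{M}$, after which overlapping unbroken sets containing $0$ are glued and the leftovers are handled by the symmetric-root relation (\ref{SymmPos}). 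Your walk only produces roots anchored at one endpoint, which is a strictly smaller family than $\mathbb{N}\sigma\cap\Delta_{\pm}$; and the bound $|\alpha_i(\widehat{F})|\le 3$ from Theorem \ref{DSimple} cannot by itself force unbrokenness (the paper remarks the argument would survive even with a bound of $4$, precisely because the bound is not what drives the proof). Your closing suggestion to carry ``concrete witnesses'' for extreme eigenvalues points in the right direction but would need to be upgraded to carrying the entire multisets $\mathcal{E}(\sigma)$ of the input meander and the transport identities above; as written, the proposal does not close the Pure Expansion case and therefore does not prove the theorem.
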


\begin{proof} 
The proof is by  
induction on the number of Winding-up moves from Lemma \ref{Expansion} and
that $\mathcal{E}(\sigma)$ is unbroken for every maximally connected component $\sigma$.
Since $\mathcal{E}(\sigma)$ always contains $0$, Theorem \ref{thm:unbroken} implies the unbroken property of
Theorem \ref{thm:main}.

The base of the induction in type-A is 
$\mathcal{M}_1(1 \dd \emptyset)$, which has unbroken spectrum $\{0,1\}$.  

The base of the induction for either type B or type C is an orbit meander $\mathcal{M}_q(1^q\dd \emptyset)$
 where $q$ is a positive integer.
There is one maximally connected component $\sigma$, of type B or type C, and by Theorem \ref{thm:symmetry}, $\mathcal{E}(\sigma)$ is unbroken either way.

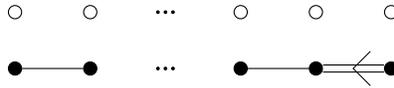
\begin{figure}[H]
\[\begin{tikzpicture}
[decoration={markings,mark=at position 0.6 with 
{\arrow{angle 90}{>}}}]

\draw (3,-.75) node[draw,circle,fill=black,minimum size=5pt,inner sep=0pt] (3+) {};
\draw (4,-.75) node[draw,circle,fill=black,minimum size=5pt,inner sep=0pt] (4+) {};
\draw (4.9,-.75) node[draw,circle,fill=black,minimum size=1pt,inner sep=0pt] (4.9+) {};
\draw (5,-.75) node[draw,circle,fill=black,minimum size=1pt,inner sep=0pt] (5+) {};
\draw (5.1,-.75) node[draw,circle,fill=black,minimum size=1pt,inner sep=0pt] (5.1+) {};
\draw (6,-.75) node[draw,circle,fill=black,minimum size=5pt,inner sep=0pt] (6+) {};
\draw (7,-.75) node[draw,circle,fill=black,minimum size=5pt,inner sep=0pt] (7+) {};
\draw (8,-.75) node[draw,circle,fill=black,minimum size=5pt,inner sep=0pt] (8+) {};

\draw (3,0) node[draw,circle,fill=white,minimum size=5pt,inner sep=0pt] (3-) {};
\draw (4,0) node[draw,circle,fill=white,minimum size=5pt,inner sep=0pt] (4-) {};
\draw (4.9,0) node[draw,circle,fill=black,minimum size=1pt,inner sep=0pt] (4.9-) {};
\draw (5,0) node[draw,circle,fill=black,minimum size=1pt,inner sep=0pt] (5-) {};
\draw (5.1,0) node[draw,circle,fill=black,minimum size=1pt,inner sep=0pt] (5.1-) {};
\draw (6,0) node[draw,circle,fill=white,minimum size=5pt,inner sep=0pt] (6-) {};
\draw (7,0) node[draw,circle,fill=white,minimum size=5pt,inner sep=0pt] (7-) {};
\draw (8,0) node[draw,circle,fill=white,minimum size=5pt,inner sep=0pt] (8-) {};

\draw (3+) to (4+);
\draw (6+) to (7+);
\draw [double distance=.8mm,postaction={decorate}] (8+) to (7+);

;\end{tikzpicture}\]
\caption{The Frobenius orbit meander $\mathcal{M}_q^\C(1^q\dd \emptyset)$}
\label{typeCbase}
\end{figure}

The base of the induction for type D is one of the orbit meanders from Example \ref{Dbases}.  The first two cases follow immediately from Theorem \ref{thm:symmetry}.  For the last, observe the orbit meander consists of a type-A component and a type-D component.  The eigenvalues contributed by the type-A component are
$\{-1, 0, 1, 2\}$.  The eigenvalues contributed by the type-D component are unbroken and contain $0$ by Theorem \ref{thm:symmetry}.  Their union is therefore unbroken.

Let $\mathcal{M} = \mathcal{M}(\ul{a}\dd\ul{b})$ be an orbit meander of classical type.  Suppose $\mathcal{E}(\sigma)$ is unbroken for each maximally connected component $\sigma$ in $\mathcal{M}$.  Let $\mathcal{M}'$ be the orbit meander resulting from applying one of the Winding-up moves from Lemma \ref{Expansion} to $\mathcal{M}$.  
For the inductive step, we need not consider the Flip-up move since this merely replaces $\mathcal{M}$ with an inverted isomorphic copy and consequently has no effect on the eigenvalue calculations. 
So, there are three cases we need to consider: block creation, rotation expansion, and pure expansion.  
We show $\mathcal{E}(\sigma)$ is unbroken for each maximally connected component $\sigma$ in $\mathcal{M}'$ that is not in $\mathcal{M}$.  
The following sets of vertices will assist in the computation of the eigenvalues.  
Note that the ordering of the sets in columns three and four of the following table are set up so that the vertices in the sets are in the order of the inducted-upon orbit meander.  

\newpage  

\begin{table}[H]
\begin{center}
\scalebox{0.83}{
\begin{tabular}{|c|c|c|c|}
\hline
\textbf{Move} & \textbf{Base} $\mathcal{M}$ & \textbf{New} $\mathcal{M}'$ & \textbf{New $\sigma$'s} \\
\hline
\hline
$\begin{array}{cc}
     \textbf{Block}  \\
     \textbf{Creation} 
\end{array}$ 
& 
$\begin{array}{l}
     A = \{w_1^+, w_{2}^+, ..., w_{a_1-1}^+\}  
\end{array}$ 
&
$\begin{array}{llll}
     B' = \{w_1^+, w_2^+, ..., w_{a_1-1}^+\}  \\
     C' = \{w_{a_1}^+\}  \\
     A' = \{w_{a_1+1}^+, w_{a_1+2}^+, ..., w_{2a_1-1}^+\} \\
     D' = \{w_1^-, w_2^-, ..., w_{a_1-1}^-\}  
\end{array}$  
&
$\begin{array}{ll}
     \sigma_1 = B' \cup C' \cup A'  \\ 
     \sigma_2 = D'  
\end{array}$ \\ 
\hline
$\begin{array}{cc}
     \textbf{Rotation}  \\
     \textbf{Expansion} 
\end{array}$ 
& 
$\begin{array}{lll}
     A = \{w_1^+, w_2^+, ..., w_{b_1-1}^+\}  \\ 
     B = \{w_{b_1}^+, w_{b_1+1}^+, ..., w_{a_1-1}^+\} \\
     C = \{w_1^-, w_2^-, ..., w_{b_1-1}^-\} 
\end{array}$
&
$\begin{array}{llll}
     A' = \{w_1^+, w_2^+, ..., w_{a_1-b_1}^+\}  \\
     C' = \{w_{a_1-b_1+1}^+, w_{a_1-b_1+2}^+, ..., w_{a_1-1}^+\}  \\
     B' = \{w_{a_1}^+, w_{a_1+1}^+, ..., w_{2a_1-b_1-1}^+\} \\
     D'= \{w_1^-, w_2^-, ...,w_{a_1-1}^-\} 
\end{array}$  
&
$\begin{array}{ll}
     \sigma_1 = A' \cup C' \cup B'  \\
     \sigma_2 = D'
\end{array}$ \\
\hline
$\begin{array}{cc}
     \textbf{Pure}  \\
     \textbf{Expansion} 
\end{array}$ & 
$\begin{array}{ll}
     A = \{w_1^+, w_2^+, ..., w_{a_1-1}^+\}  \\ 
     B = \{w_{a_1+1}^+, w_{a_1+2}^+, ..., w_{a_1+a_2-1}^+\}
\end{array}$
&
$\begin{array}{lll}
     B' = \{w_1^+, w_2^+, ..., w_{a_2-1}^+\} \\
     E' = \{w_{a_2}^+\} \\
     A' = \{w_{a_2+1}^+, w_{a_2+2}^+, ..., w_{a_1+a_2-1}\}  \\
     F' = \{w_{a_1+a_2}^+\} \\
     C' = \{w_{a_1+a_2+1}^+, w_{a_1+a_2+2}^+, ..., w_{a_1+2a_2-1}^+\} \\
     D' = \{w_1^-, w_2^-, ..., w_{a_2-1}^-\}
\end{array}$  
&
$\begin{array}{ll}
     \sigma_1 = B' \cup E' \cup A' \cup F' \cup C'  \\
     \sigma_2 = D' 
\end{array}$ \\
\hline
\end{tabular}
}
\caption{Sets in the Induction Proof}
\label{tab:induction}
\end{center}
\end{table}

Gray vertices in the orbit meanders associated to each Winding up move are vertices not impacted by the induction.  Furthermore, for brevity, the orbit meanders do not extend beyond the relevant top blocks.  

\setcounter{case}{0}
\begin{case}\label{unbroken1}
Block Creation:
\end{case}

\begin{figure}[H]
\[\begin{tikzpicture}
[decoration={markings,mark=at position 0.6 with 
{\arrow{angle 90}{>}}}]

\draw (.5,1) -- (4.5,1);
\draw (.5,.25) -- (4.5,.25);
\draw (.5,.25) -- (.5,1);
\draw (4.5,.25) -- (4.5,1);

\draw (1,.75) node[draw,circle,fill=black,minimum size=5pt,inner sep=0pt] (1+) {};
\draw (2,.75) node[draw,circle,fill=black,minimum size=5pt,inner sep=0pt] (2+) {};
\draw (3,.75) node[draw,circle,fill=black,minimum size=5pt,inner sep=0pt] (3+) {};
\draw (4,.75) node[draw,circle,fill=black,minimum size=5pt,inner sep=0pt] (4+) {};
\draw (5,.75) node[draw,circle,fill=white,minimum size=5pt,inner sep=0pt] (5+) {};

\draw (1,0) node[draw,circle,fill=gray,minimum size=5pt,inner sep=0pt] (1-) {};
\draw (2,0) node[draw,circle,fill=gray,minimum size=5pt,inner sep=0pt] (2-) {};
\draw (3,0) node[draw,circle,fill=gray,minimum size=5pt,inner sep=0pt] (3-) {};
\draw (4,0) node[draw,circle,fill=gray,minimum size=5pt,inner sep=0pt] (4-) {};
\draw (5,0) node[draw,circle,fill=gray,minimum size=5pt,inner sep=0pt] (5-) {};

\draw (1+) to (2+);
\draw (2+) to (3+);
\draw (3+) to (4+);

\draw [dashed] (1+) to [bend left=60] (4+);
\draw [dashed] (2+) to [bend left=60] (3+);

\node at (.5,1.25) {$A$};
\node at (1,.5) {$w_1^+$};
\node at (2,.5) {$w_2^+$};
\node at (3,.5) {$w_3^+$};
\node at (4,.5) {$w_4^+$};

;\end{tikzpicture}
\hspace{1cm}
\begin{tikzpicture}
[decoration={markings,mark=at position 0.6 with 
{\arrow{angle 90}{>}}}]

\draw (.5,1.5) -- (4.5,1.5);
\draw (.5,.75) -- (4.5,.75);
\draw (.5,.75) -- (.5,1.5);
\draw (4.5,.75) -- (4.5,1.5);

\draw (5.5,1.5) -- (9.5,1.5);
\draw (5.5,.75) -- (9.5,.75);
\draw (5.5,.75) -- (5.5,1.5);
\draw (9.5,.75) -- (9.5,1.5);

\draw (4.6,1.5) -- (5.4,1.5);
\draw (4.6,.75) -- (5.4,.75);
\draw (4.6,.75) -- (4.6,1.5);
\draw (5.4,.75) -- (5.4,1.5);

\draw (.5,-.25) -- (4.5,-.25);
\draw (.5,.6) -- (4.5,.6);
\draw (.5,.6) -- (.5,-.25);
\draw (4.5,.6) -- (4.5,-.25);

\draw (1,1.25) node[draw,circle,fill=black,minimum size=5pt,inner sep=0pt] (1+) {};
\draw (2,1.25) node[draw,circle,fill=black,minimum size=5pt,inner sep=0pt] (2+) {};
\draw (3,1.25) node[draw,circle,fill=black,minimum size=5pt,inner sep=0pt] (3+) {};
\draw (4,1.25) node[draw,circle,fill=black,minimum size=5pt,inner sep=0pt] (4+) {};
\draw (5,1.25) node[draw,circle,fill=black,minimum size=5pt,inner sep=0pt] (5+) {};
\draw (6,1.25) node[draw,circle,fill=black,minimum size=5pt,inner sep=0pt] (6+) {};
\draw (7,1.25) node[draw,circle,fill=black,minimum size=5pt,inner sep=0pt] (7+) {};
\draw (8,1.25) node[draw,circle,fill=black,minimum size=5pt,inner sep=0pt] (8+) {};
\draw (9,1.25) node[draw,circle,fill=black,minimum size=5pt,inner sep=0pt] (9+) {};
\draw (10,1.25) node[draw,circle,fill=white,minimum size=5pt,inner sep=0pt] (10+) {};

\draw (1,0) node[draw,circle,fill=black,minimum size=5pt,inner sep=0pt] (1-) {};
\draw (2,0) node[draw,circle,fill=black,minimum size=5pt,inner sep=0pt] (2-) {};
\draw (3,0) node[draw,circle,fill=black,minimum size=5pt,inner sep=0pt] (3-) {};
\draw (4,0) node[draw,circle,fill=black,minimum size=5pt,inner sep=0pt] (4-) {};
\draw (5,0) node[draw,circle,fill=white,minimum size=5pt,inner sep=0pt] (5-) {};
\draw (6,0) node[draw,circle,fill=gray,minimum size=5pt,inner sep=0pt] (6-) {};
\draw (7,0) node[draw,circle,fill=gray,minimum size=5pt,inner sep=0pt] (7-) {};
\draw (8,0) node[draw,circle,fill=gray,minimum size=5pt,inner sep=0pt] (8-) {};
\draw (9,0) node[draw,circle,fill=gray,minimum size=5pt,inner sep=0pt] (9-) {};
\draw (10,0) node[draw,circle,fill=gray,minimum size=5pt,inner sep=0pt] (10-) {};

\draw (1+) to (2+);
\draw (2+) to (3+);
\draw (3+) to (4+);
\draw (4+) to (5+);
\draw (5+) to (6+);
\draw (6+) to (7+);
\draw (7+) to (8+);
\draw (8+) to (9+);
\draw (1-) to (2-);
\draw (2-) to (3-);
\draw (3-) to (4-);

\draw [dashed] (1+) to [bend left=60] (9+);
\draw [dashed] (2+) to [bend left=60] (8+);
\draw [dashed] (3+) to [bend left=60] (7+);
\draw [dashed] (4+) to [bend left=90] (6+);
\draw [dashed] (1-) to [bend right=60] (4-);
\draw [dashed] (2-) to [bend right=60] (3-);

\node at (.5,-.5) {$D'$};
\node at (.5,1.75) {$B'$};
\node at (5,1.75) {$C'$};
\node at (9.5,1.75) {$A'$};
\node at (1,1) {$w_1^+$};
\node at (2,1) {$w_2^+$};
\node at (3,1) {$w_3^+$};
\node at (4,1) {$w_4^+$};
\node at (5,1) {$w_5^+$};
\node at (6,1) {$w_6^+$};
\node at (7,1) {$w_7^+$};
\node at (8,1) {$w_8^+$};
\node at (9,1) {$w_9^+$};
\node at (1,.35) {$w_1^-$};
\node at (2,.35) {$w_2^-$};
\node at (3,.35) {$w_3^-$};
\node at (4,.35) {$w_4^-$};

;\end{tikzpicture}
\]
\caption{Block Creation applied to $\mathcal{M}$ with $a_1 = 5$ (left) to obtain $\mathcal{M}'$ (right)}
\label{fig:InductionBlockCreation}
\end{figure}
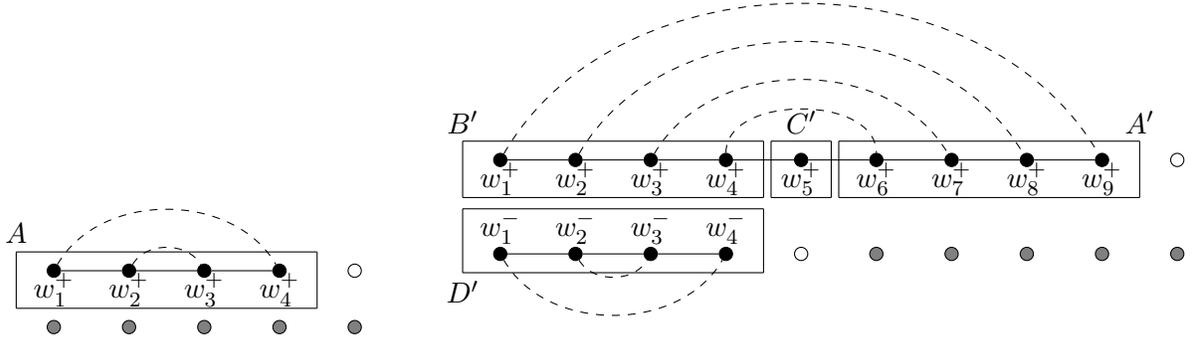

We have  

\noindent 
\begin{eqnarray}
\label{ind1.1} \mathcal{E}(A') &=& \mathcal{E}(A),\\
\label{ind1.2} \mathcal{E}(B') &=& -\mathcal{E}(A), \\
\label{ind1.3} \mathcal{E}(D') &=& \mathcal{E}(A).
\end{eqnarray} 
Note that $\mathcal{E}(A)$ is unbroken by induction.  So, by Equation (\ref{ind1.3}), $\mathcal{E}(\sigma_2)$ is unbroken.  

Any number in 
$\mathcal{E}(A' \cup C')$
is in either $\mathcal{E}(A')$ or $\mathcal{E}(A')+1$, so by equation (\ref{ind1.1}), $\mathcal{E}(A' \cup C')$ is unbroken.  By symmetry and equation (\ref{ind1.2}), 
$\mathcal{E}(B' \cup C')$ is unbroken.  Hence
$\mathcal{E}(B' \cup C') \cup \mathcal{E}(A' \cup C')$ is unbroken.  
Any remaining eigenvalue in $\mathcal{E}(\sigma_1)$ is either one or symmetric to an eigenvalue in $\mathcal{E}(B' \cup C') \cup \mathcal{E}(A' \cup C')$,  
so $\mathcal{E}(\sigma_1)$ is unbroken.

\begin{case}\label{unbroken2}
Rotation Expansion:
\end{case}

\begin{figure}[H]
\[\begin{tikzpicture}
[decoration={markings,mark=at position 0.6 with 
{\arrow{angle 90}{>}}}]

\draw (.5,1.5) -- (2.5,1.5);
\draw (.5,.75) -- (2.5,.75);
\draw (.5,.75) -- (.5,1.5);
\draw (2.5,.75) -- (2.5,1.5);

\draw (2.6,1.5) -- (4.5,1.5);
\draw (2.6,.75) -- (4.5,.75);
\draw (2.6,.75) -- (2.6,1.5);
\draw (4.5,.75) -- (4.5,1.5);

\draw (.5,-.25) -- (2.5,-.25);
\draw (.5,.6) -- (2.5,.6);
\draw (.5,.6) -- (.5,-.25);
\draw (2.5,.6) -- (2.5,-.25);

\draw (1,1.25) node[draw,circle,fill=black,minimum size=5pt,inner sep=0pt] (1+) {};
\draw (2,1.25) node[draw,circle,fill=black,minimum size=5pt,inner sep=0pt] (2+) {};
\draw (3,1.25) node[draw,circle,fill=black,minimum size=5pt,inner sep=0pt] (3+) {};
\draw (4,1.25) node[draw,circle,fill=black,minimum size=5pt,inner sep=0pt] (4+) {};
\draw (5,1.25) node[draw,circle,fill=white,minimum size=5pt,inner sep=0pt] (5+) {};

\draw (1,0) node[draw,circle,fill=black,minimum size=5pt,inner sep=0pt] (1-) {};
\draw (2,0) node[draw,circle,fill=black,minimum size=5pt,inner sep=0pt] (2-) {};
\draw (3,0) node[draw,circle,fill=white,minimum size=5pt,inner sep=0pt] (3-) {};
\draw (4,0) node[draw,circle,fill=gray,minimum size=5pt,inner sep=0pt] (4-) {};
\draw (5,0) node[draw,circle,fill=gray,minimum size=5pt,inner sep=0pt] (5-) {};

\draw (1+) to (2+);
\draw (2+) to (3+);
\draw (3+) to (4+);

\draw [dashed] (1+) to [bend left=60] (4+);
\draw [dashed] (2+) to [bend left=60] (3+);
\draw [dashed] (1-) to [bend right=60] (2-);

\node at (.5,1.75) {$A$};
\node at (4.5,1.75) {$B$};
\node at (.5,-.5) {$C$};
\node at (1,1) {$w_1^+$};
\node at (2,1) {$w_2^+$};
\node at (3,1) {$w_3^+$};
\node at (4,1) {$w_4^+$};
\node at (1,.35) {$w_1^-$};
\node at (2,.35) {$w_2^-$};

;\end{tikzpicture}
\hspace{1cm}
\begin{tikzpicture}
[decoration={markings,mark=at position 0.6 with 
{\arrow{angle 90}{>}}}]

\draw (.5,1.5) -- (2.5,1.5);
\draw (.5,.75) -- (2.5,.75);
\draw (.5,.75) -- (.5,1.5);
\draw (2.5,.75) -- (2.5,1.5);

\draw (4.5,1.5) -- (6.5,1.5);
\draw (4.5,.75) -- (6.5,.75);
\draw (4.5,.75) -- (4.5,1.5);
\draw (6.5,.75) -- (6.5,1.5);

\draw (2.6,1.5) -- (4.4,1.5);
\draw (2.6,.75) -- (4.4,.75);
\draw (2.6,.75) -- (2.6,1.5);
\draw (4.4,.75) -- (4.4,1.5);

\draw (.5,-.25) -- (4.5,-.25);
\draw (.5,.6) -- (4.5,.6);
\draw (.5,.6) -- (.5,-.25);
\draw (4.5,.6) -- (4.5,-.25);

\draw (1,1.25) node[draw,circle,fill=black,minimum size=5pt,inner sep=0pt] (1+) {};
\draw (2,1.25) node[draw,circle,fill=black,minimum size=5pt,inner sep=0pt] (2+) {};
\draw (3,1.25) node[draw,circle,fill=black,minimum size=5pt,inner sep=0pt] (3+) {};
\draw (4,1.25) node[draw,circle,fill=black,minimum size=5pt,inner sep=0pt] (4+) {};
\draw (5,1.25) node[draw,circle,fill=black,minimum size=5pt,inner sep=0pt] (5+) {};
\draw (6,1.25) node[draw,circle,fill=black,minimum size=5pt,inner sep=0pt] (6+) {};
\draw (7,1.25) node[draw,circle,fill=white,minimum size=5pt,inner sep=0pt] (7+) {};

\draw (1,0) node[draw,circle,fill=black,minimum size=5pt,inner sep=0pt] (1-) {};
\draw (2,0) node[draw,circle,fill=black,minimum size=5pt,inner sep=0pt] (2-) {};
\draw (3,0) node[draw,circle,fill=black,minimum size=5pt,inner sep=0pt] (3-) {};
\draw (4,0) node[draw,circle,fill=black,minimum size=5pt,inner sep=0pt] (4-) {};
\draw (5,0) node[draw,circle,fill=white,minimum size=5pt,inner sep=0pt] (5-) {};
\draw (6,0) node[draw,circle,fill=gray,minimum size=5pt,inner sep=0pt] (6-) {};
\draw (7,0) node[draw,circle,fill=gray,minimum size=5pt,inner sep=0pt] (7-) {};

\draw (1+) to (6+);
\draw (1-) to (4-);

\draw [dashed] (1+) to [bend left=60] (6+);
\draw [dashed] (2+) to [bend left=60] (5+);
\draw [dashed] (3+) to [bend left=60] (4+);
\draw [dashed] (1-) to [bend right=60] (4-);
\draw [dashed] (2-) to [bend right=60] (3-);

\node at (.5,-.5) {$D'$};
\node at (.5,1.75) {$A'$};
\node at (3.5,1.75) {$C'$};
\node at (6.5,1.75) {$B'$};
\node at (1,1) {$w_1^+$};
\node at (2,1) {$w_2^+$};
\node at (3,1) {$w_3^+$};
\node at (4,1) {$w_4^+$};
\node at (5,1) {$w_5^+$};
\node at (6,1) {$w_6^+$};
\node at (1,.35) {$w_1^-$};
\node at (2,.35) {$w_2^-$};
\node at (3,.35) {$w_3^-$};
\node at (4,.35) {$w_4^-$};

;\end{tikzpicture}
\]
\caption{Rotation Expansion applied to $\mathcal{M}$ with $a_1 = 5$ and $b_1=3$ (left) to obtain $\mathcal{M}'$ (right)}
\label{fig:InductionRotationExpansion}
\end{figure}
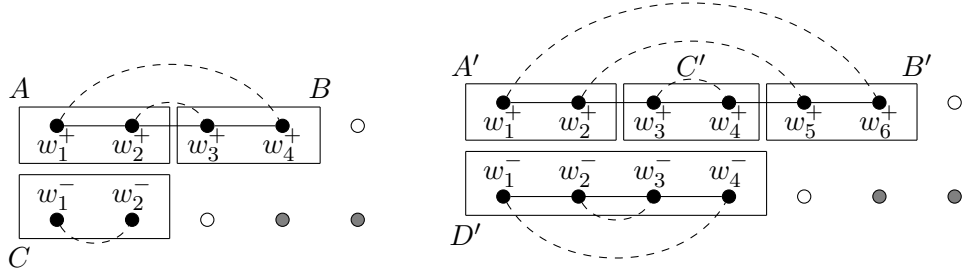

We have 
\begin{eqnarray}
\label{ind2.1} \mathcal{E}(D') &=& \mathcal{E}(A \cup B),\\
\label{ind2.2} \mathcal{E}(C') &=& -\mathcal{E}(C). 
\end{eqnarray}

\noindent 
By equation (\ref{ind2.1}), $\mathcal{E}(\sigma_2)$ is unbroken.  Without loss of generality, 
\begin{eqnarray}
\label{ind2.3} \mathcal{E}(A' \cup C') &=& -\mathcal{E}(A \cup B),
\end{eqnarray} 
so $\mathcal{E}(A' \cup C') \subseteq \mathcal{E}(\sigma_1)$ is unbroken.  All other eigenvalues in $\mathcal{E}(\sigma_1)$ are symmetric to eigenvalues in $\mathcal{E}(A' \cup C')$, and consequently, $\mathcal{E}(\sigma_1)$ is unbroken.  

\begin{case}\label{unbroken3}
Pure Expansion:
\end{case}

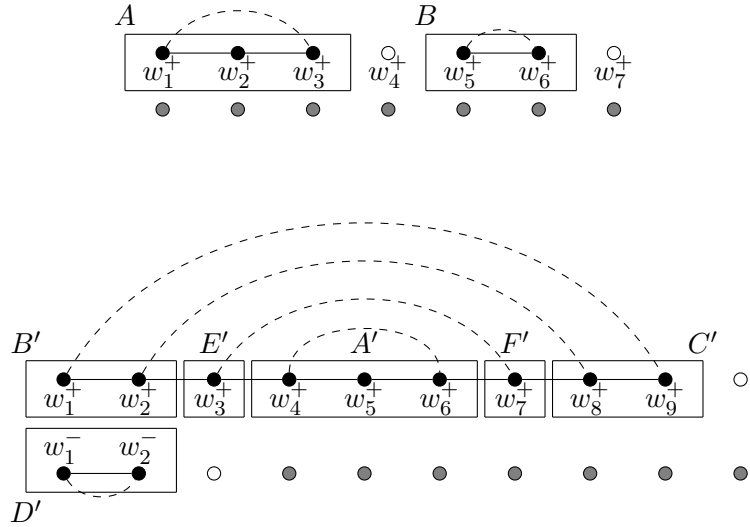
\begin{figure}[H]
\[\begin{tikzpicture}
[decoration={markings,mark=at position 0.6 with 
{\arrow{angle 90}{>}}}]

\draw (.5,1) -- (3.5,1);
\draw (.5,.25) -- (3.5,.25);
\draw (.5,.25) -- (.5,1);
\draw (3.5,.25) -- (3.5,1);

\draw (4.5,1) -- (6.5,1);
\draw (4.5,.25) -- (6.5,.25);
\draw (4.5,.25) -- (4.5,1);
\draw (6.5,.25) -- (6.5,1);

\draw (1,.75) node[draw,circle,fill=black,minimum size=5pt,inner sep=0pt] (1+) {};
\draw (2,.75) node[draw,circle,fill=black,minimum size=5pt,inner sep=0pt] (2+) {};
\draw (3,.75) node[draw,circle,fill=black,minimum size=5pt,inner sep=0pt] (3+) {};
\draw (4,.75) node[draw,circle,fill=white,minimum size=5pt,inner sep=0pt] (4+) {};
\draw (5,.75) node[draw,circle,fill=black,minimum size=5pt,inner sep=0pt] (5+) {};
\draw (6,.75) node[draw,circle,fill=black,minimum size=5pt,inner sep=0pt] (6+) {};
\draw (7,.75) node[draw,circle,fill=white,minimum size=5pt,inner sep=0pt] (7+) {};

\draw (1,0) node[draw,circle,fill=gray,minimum size=5pt,inner sep=0pt] (1-) {};
\draw (2,0) node[draw,circle,fill=gray,minimum size=5pt,inner sep=0pt] (2-) {};
\draw (3,0) node[draw,circle,fill=gray,minimum size=5pt,inner sep=0pt] (3-) {};
\draw (4,0) node[draw,circle,fill=gray,minimum size=5pt,inner sep=0pt] (4-) {};
\draw (5,0) node[draw,circle,fill=gray,minimum size=5pt,inner sep=0pt] (5-) {};
\draw (6,0) node[draw,circle,fill=gray,minimum size=5pt,inner sep=0pt] (6-) {};
\draw (7,0) node[draw,circle,fill=gray,minimum size=5pt,inner sep=0pt] (7-) {};

\draw (1+) to (2+);
\draw (2+) to (3+);
\draw (5+) to (6+);

\draw [dashed] (1+) to [bend left=60] (3+);
\draw [dashed] (5+) to [bend left=60] (6+);

\node at (.5,1.25) {$A$};
\node at (4.5,1.25) {$B$};
\node at (1,.5) {$w_1^+$};
\node at (2,.5) {$w_2^+$};
\node at (3,.5) {$w_3^+$};
\node at (4,.5) {$w_4^+$};
\node at (5,.5) {$w_5^+$};
\node at (6,.5) {$w_6^+$};
\node at (7,.5) {$w_7^+$};

;\end{tikzpicture}\]

\[\begin{tikzpicture}
[decoration={markings,mark=at position 0.6 with 
{\arrow{angle 90}{>}}}]

\draw (.5,1.5) -- (2.5,1.5);
\draw (.5,.75) -- (2.5,.75);
\draw (.5,.75) -- (.5,1.5);
\draw (2.5,.75) -- (2.5,1.5);

\draw (2.6,1.5) -- (3.4,1.5);
\draw (2.6,.75) -- (3.4,.75);
\draw (2.6,.75) -- (2.6,1.5);
\draw (3.4,.75) -- (3.4,1.5);

\draw (3.5,1.5) -- (6.5,1.5);
\draw (3.5,.75) -- (6.5,.75);
\draw (3.5,.75) -- (3.5,1.5);
\draw (6.5,.75) -- (6.5,1.5);

\draw (6.6,1.5) -- (7.4,1.5);
\draw (6.6,.75) -- (7.4,.75);
\draw (6.6,.75) -- (6.6,1.5);
\draw (7.4,.75) -- (7.4,1.5);

\draw (7.5,1.5) -- (9.5,1.5);
\draw (7.5,.75) -- (9.5,.75);
\draw (7.5,.75) -- (7.5,1.5);
\draw (9.5,.75) -- (9.5,1.5);

\draw (.5,-.25) -- (2.5,-.25);
\draw (.5,.6) -- (2.5,.6);
\draw (.5,.6) -- (.5,-.25);
\draw (2.5,.6) -- (2.5,-.25);

\draw (1,1.25) node[draw,circle,fill=black,minimum size=5pt,inner sep=0pt] (1+) {};
\draw (2,1.25) node[draw,circle,fill=black,minimum size=5pt,inner sep=0pt] (2+) {};
\draw (3,1.25) node[draw,circle,fill=black,minimum size=5pt,inner sep=0pt] (3+) {};
\draw (4,1.25) node[draw,circle,fill=black,minimum size=5pt,inner sep=0pt] (4+) {};
\draw (5,1.25) node[draw,circle,fill=black,minimum size=5pt,inner sep=0pt] (5+) {};
\draw (6,1.25) node[draw,circle,fill=black,minimum size=5pt,inner sep=0pt] (6+) {};
\draw (7,1.25) node[draw,circle,fill=black,minimum size=5pt,inner sep=0pt] (7+) {};
\draw (8,1.25) node[draw,circle,fill=black,minimum size=5pt,inner sep=0pt] (8+) {};
\draw (9,1.25) node[draw,circle,fill=black,minimum size=5pt,inner sep=0pt] (9+) {};
\draw (10,1.25) node[draw,circle,fill=white,minimum size=5pt,inner sep=0pt] (10+) {};

\draw (1,0) node[draw,circle,fill=black,minimum size=5pt,inner sep=0pt] (1-) {};
\draw (2,0) node[draw,circle,fill=black,minimum size=5pt,inner sep=0pt] (2-) {};
\draw (3,0) node[draw,circle,fill=white,minimum size=5pt,inner sep=0pt] (3-) {};
\draw (4,0) node[draw,circle,fill=gray,minimum size=5pt,inner sep=0pt] (4-) {};
\draw (5,0) node[draw,circle,fill=gray,minimum size=5pt,inner sep=0pt] (5-) {};
\draw (6,0) node[draw,circle,fill=gray,minimum size=5pt,inner sep=0pt] (6-) {};
\draw (7,0) node[draw,circle,fill=gray,minimum size=5pt,inner sep=0pt] (7-) {};
\draw (8,0) node[draw,circle,fill=gray,minimum size=5pt,inner sep=0pt] (8-) {};
\draw (9,0) node[draw,circle,fill=gray,minimum size=5pt,inner sep=0pt] (9-) {};
\draw (10,0) node[draw,circle,fill=gray,minimum size=5pt,inner sep=0pt] (10-) {};

\draw (1+) to (2+);
\draw (2+) to (3+);
\draw (3+) to (4+);
\draw (4+) to (5+);
\draw (5+) to (6+);
\draw (6+) to (7+);
\draw (7+) to (8+);
\draw (8+) to (9+);
\draw (1-) to (2-);

\draw [dashed] (1+) to [bend left=60] (9+);
\draw [dashed] (2+) to [bend left=60] (8+);
\draw [dashed] (3+) to [bend left=60] (7+);
\draw [dashed] (4+) to [bend left=90] (6+);
\draw [dashed] (1-) to [bend right=60] (2-);

\node at (.5,-.5) {$D'$};
\node at (.5,1.75) {$B'$};
\node at (3,1.75) {$E'$};
\node at (5,1.75) {$A'$};
\node at (7,1.75) {$F'$};
\node at (9.5,1.75) {$C'$};
\node at (1,1) {$w_1^+$};
\node at (2,1) {$w_2^+$};
\node at (3,1) {$w_3^+$};
\node at (4,1) {$w_4^+$};
\node at (5,1) {$w_5^+$};
\node at (6,1) {$w_6^+$};
\node at (7,1) {$w_7^+$};
\node at (8,1) {$w_8^+$};
\node at (9,1) {$w_9^+$};
\node at (1,.35) {$w_1^-$};
\node at (2,.35) {$w_2^-$};

;\end{tikzpicture}
\]
\caption{Pure Expansion applied to $\mathcal{M}$ with $a_1 = 4$ and $a_2=3$ (top) to obtain $\mathcal{M}'$ (bottom)}
\label{fig:InductionPureExpansion}
\end{figure}

We have 

\begin{eqnarray}
\label{ind2.4} \mathcal{E}(B') &=& -\mathcal{E}(B), \\
\label{ind2.5} \mathcal{E}(A') &=& \mathcal{E}(A), \\
\label{ind2.6} \mathcal{E}(C') &=& \mathcal{E}(B), \\
\label{ind2.7} \mathcal{E}(D') &=& \mathcal{E}(B).  
\end{eqnarray} 
\noindent 
By equation (\ref{ind2.7}), $\mathcal{E}(\sigma_2)$ is unbroken.  
Let $\gamma = \alpha_{a_2}(\widehat{F})$ for $\alpha_{a_2} \in \pi_1$.  By Theorem \ref{DSimple}, $\gamma = 1, 2,$ or $3$.  Since 
\[\alpha_{1}(\widehat{F})+ \alpha_{2}(\widehat{F}) + ... + \alpha_{a_2-1}(\widehat{F}) = -1,\] 
and $\gamma = 1, 2,$ or $3$, $\mathcal{E}(B' \cup E')$ contains $0, 1,$ or $2$.  In particular, 
\[\alpha_{1}(\widehat{F})+ \alpha_{2}(\widehat{F}) + ... + \alpha_{a_2-1}(\widehat{F}) + \alpha_{a_2}(\widehat{F})= \gamma-1.\] 
It is clear that if $a_2$ is even, then the eigenvalues in $\mathcal{E}(B' \cup E') \setminus \mathcal{E}(B')$ are 
\[\{ \gamma, 
\gamma + \alpha_{a_2-1}(\widehat{F}), 
\gamma + \alpha_{a_2-2}(\widehat{F}), ..., 
\gamma + \alpha_{\frac{a_2}{2}+1}(\widehat{F}), 
\gamma - 1, 
\gamma - 1 +  \alpha_{a_2-1}(\widehat{F}), 
\gamma - 1 + \alpha_{a_2-2}(\widehat{F}), ..., 
\gamma - 1 + \alpha_{\frac{a_2}{2}+1}(\widehat{F})
\}.\] 
By Theorem \ref{DSimple}, $|\alpha_i(\widehat{F})| = 0, 1, 2,$ or $3$ for all $i$.  

If $|\alpha_i(\widehat{F})| \neq 3$ for some $i \in \{a_2 - 1, a_2 - 2, ..., \frac{a_2}{2}+1\}$, then $\mathcal{E}(B' \cup E') \setminus \mathcal{E}(B')$ is unbroken.  Moreover, since 
\[\alpha_{a_2+1}(\widehat{F})+ \alpha_{a_2+2}(\widehat{F}) + ... + \alpha_{a_1+a_2-1}(\widehat{F}) = 1,\] 
the multiset
\[\mathcal{E}(B' \cup E' \cup A') \setminus \left[\mathcal{E}(B' \cup E') ~\cup~ \mathcal{E}(A')\right]\] 
is unbroken; but since $\{0, 1\} \subseteq \mathcal{E}(A')$, the multiset 
$\mathcal{E}(B' \cup E' \cup A')$ is unbroken and contains $0$.  Similarly, the multiset 
$\mathcal{E}(A' \cup F' \cup C')$ is unbroken and contains $0$.  Therefore,  
$\mathcal{E}(B' \cup E' \cup A') \cup \mathcal{E}(A' \cup F' \cup C')$ is unbroken and contains $0$.  Any remaining eigenvalue in $\mathcal{E}(\sigma_1)$ is symmetric to an eigenvalue in $\mathcal{E}(B' \cup E' \cup A') \cup \mathcal{E}(A' \cup F' \cup C')$.  Therefore, $\mathcal{E}(\sigma_1)$ is unbroken.  

If $|\alpha_i(\widehat{F})| = 3$ for all $i \in \{a_2 - 1, a_2 - 2, ..., \frac{a_2}{2}+1\}$, then $\mathcal{E}(B' \cup E') \setminus \mathcal{E}(B')$ is not necessarily unbroken.  Moreover, any numbers preventing $\mathcal{E}(B' \cup E') \setminus \mathcal{E}(B')$ from being unbroken must be congruent $(\bmod ~3)$.  
Let $x$ be any such number.  Observe that 
\[\mathcal{E}(B' \cup E') \setminus \mathcal{E}(B') = 
-[\mathcal{E}(C' \cup F') \setminus \mathcal{E}(C')].\]
Then $x$ is symmetric to some number in $\mathcal{E}(C' \cup F') \setminus \mathcal{E}(C')$ and exists somewhere in $\mathcal{E}(\sigma_1)$.  The rest of the argument follows similarly to the previous argument: that is,  when $|\alpha_i(\widehat{F})| \neq 3$ for each $i \in \{a_2 - 1, a_2 - 2, ..., \frac{a_2}{2}+1\}$.  

If $a_2$ is odd, then the eigenvalues in $\mathcal{E}(B' \cup E') \setminus \mathcal{E}(B')$ are the same as in the previous case in addition to $\gamma + \alpha_{\lf\frac{a_2}{2}\rf}(\widehat{F})$.  But since 
\[\alpha_{a_2+1}(\widehat{F})+ \alpha_{a_2+2}(\widehat{F}) + ... + \alpha_{a_1+a_2-1}(\widehat{F}) = 1,\] 
the multiset
\[\mathcal{E}(B' \cup E' \cup A') - \left[\mathcal{E}(B' \cup E') ~\cup~ \mathcal{E}(A')\right]\] 
is unbroken.  That $\mathcal{E}(\sigma_1)$ is unbroken follows from the argument where $a_2$ is even.  
\end{proof}


\begin{remark}
The proof of Theorem \ref{thm:unbroken}, in particular, the proof for Case \ref{unbroken3}, would have held even if the simple eigenvalues from Theorem \ref{DSimple} were bounded in absolute value by $4$.  The type-A proof by Coll et al. in \textbf{\cite{Coll typea}} requires the bound to be $3$.  
\end{remark}

\section{Exceptional cases}\label{Proof of the Structure Theorem - Exceptional Cases}

We now consider seaweeds of the exceptional Lie algebras:
$\E_6$, $\E_7$, $\E_8$, $\F_4$, and $\G_2$.  
We continue to use the Bourbaki ordering of the simple roots.  
We let $\mathfrak{p}^\X_n$ denote a seaweed of exceptional type, where $\X \in \{\E, \F, \G\}$ and $n$ is the rank.  
In this notation, $\mathfrak{p}^\G_2$ is a seaweed of type $\G_2$.  In the case where the pairs of included simple roots are explicit, we use the notation $\mathfrak{p}^\X_n(\pi_1 \dd \pi_2)$.

For components of all exceptional types except $\E_6$, the longest element of $W_{\pi_j}$ is given by $w_j = -id$.  For a component of type $\E_6$, 

\[w_j\alpha_i=
\begin{cases}
\alpha_6, & \text{ if  }i = 1;\\
\alpha_5, & \text{ if  }i = 3;\\
\alpha_i, & \text{ if }i =2, 4.
\end{cases}\]

We visualize these actions and construct an exceptional orbit meander as in the classical cases.  See Example \ref{FrobeniusE6Example}.  

\begin{example}\label{FrobeniusE6Example}
Define the seaweed $\mf{p}_{6}^\E(\Theta_1 \dd \Theta_2)$ by 
$$\Theta_1 = \{\alpha_5, \alpha_4, \alpha_3,\alpha_1\},$$ 
$$\Theta_2 = \{\alpha_6, \alpha_5, \alpha_4, \alpha_3, \alpha_2, \alpha_1\}.$$  
See Figure \ref{FrobeniusE6} for the orbit meander of $\mf{p}_{6}^\E(\Theta_1 \dd \Theta_2)$.  
 Note that this seaweed is Frobenius by Theorem \ref{thm:frobenius}.

\begin{figure}[H]
\[\begin{tikzpicture}

\draw (1,0) node[draw,circle,fill=white,minimum size=5pt,inner sep=0pt] (1) {};
\draw (2,0) node[draw,circle,fill=black,minimum size=5pt,inner sep=0pt] (2) {};
\draw (3,0) node[draw,circle,fill=black,minimum size=5pt,inner sep=0pt] (3) {};
\draw (4,-.6) node[draw,circle,fill=black,minimum size=5pt,inner sep=0pt] (4) {};
\draw (5,-1.2) node[draw,circle,fill=black,minimum size=5pt,inner sep=0pt] (5) {};
\draw (4,.6) node[draw,circle,fill=white,minimum size=5pt,inner sep=0pt] (6) {};

\draw (2) to (3);
\draw (3) to (4);
\draw (4) to (5);

\draw [dashed] (2) to [bend right=60] (5);
\draw [dashed] (3) to [bend right=60] (4);

\node at (1,.25) {$\alpha_6$};
\node at (2,.25) {$\alpha_5$};
\node at (3,.25) {$\alpha_4$};
\node at (4,-.35) {$\alpha_3$};
\node at (5,-.95) {$\alpha_1$};
\node at (4,.85) {$\alpha_2$};

;\end{tikzpicture}\]

\[\begin{tikzpicture}

\draw (1,0) node[draw,circle,fill=black,minimum size=5pt,inner sep=0pt] (1) {};
\draw (2,0) node[draw,circle,fill=black,minimum size=5pt,inner sep=0pt] (2) {};
\draw (3,0) node[draw,circle,fill=black,minimum size=5pt,inner sep=0pt] (3) {};
\draw (4,-.6) node[draw,circle,fill=black,minimum size=5pt,inner sep=0pt] (4) {};
\draw (5,-1.2) node[draw,circle,fill=black,minimum size=5pt,inner sep=0pt] (5) {};
\draw (4,.6) node[draw,circle,fill=black,minimum size=5pt,inner sep=0pt] (6) {};

\draw (1) to (2);
\draw (2) to (3);
\draw (3) to (4);
\draw (4) to (5);
\draw (3) to (6);

\draw [dashed] (1) to [bend right=60] (5);
\draw [dashed] (2) to [bend right=60] (4);

\node at (1,.25) {$\alpha_6$};
\node at (2,.25) {$\alpha_5$};
\node at (3,.25) {$\alpha_4$};
\node at (4,-.35) {$\alpha_3$};
\node at (5,-.95) {$\alpha_1$};
\node at (4,.85) {$\alpha_2$};

;\end{tikzpicture}\]
\caption{The seaweed $\mf{p}_{6}^\E(\Theta_1 \dd \Theta_2)$}
\label{FrobeniusE6} 
\end{figure}

\end{example}

Again, as in Section~\ref{Principal Elements}, we have the exceptional analogues of Lemma \ref{eigenvalue table}
 and Lemma \ref{eigenvalue orbit}.

\begin{lemma}[Joseph \textbf{\cite{Joseph}}, Section 5]
\label{Exceptionaleignevalue table}
In Table \ref{tab:ExceptionalEigenvalue} below, the given value is 
$\alpha_i(\widehat{F})$ if $\sigma$ is a maximally connected component 
of $\pi_1$, and it is $-\alpha_i(\widehat{F})$ if 
$\sigma$ is a maximally connected component of $\pi_2$.
In either case, it is assumed that $\alpha_i\in\sigma$.
\end{lemma}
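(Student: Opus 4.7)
The plan is to extend Joseph's derivation of Table \ref{tab:simple eigenvalue} to the exceptional types $\E_6, \E_7, \E_8, \F_4, \G_2$, in a case-by-case manner that mirrors the classical treatment. Since the statement is local to a single maximally connected component $\sigma$, I would first reduce to the setting in which the ambient seaweed is a minimal Frobenius seaweed containing $\sigma$ (augmented with trivial fixed-point components so that Theorem \ref{thm:frobenius} holds). The principal element $\widehat{F}$ may then be chosen in the Cartan subalgebra $\mf{h}$, so $\alpha_i(\widehat{F})$ is just the coordinate of $\widehat{F}$ against $\alpha_i$.

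The defining relation $F = F \circ \ad\widehat{F}$ translates, once a specific Frobenius functional $F$ is pinned down, into a linear system in the unknowns $\{\alpha_i(\widehat{F})\}_{\alpha_i \in \sigma}$. A natural choice of $F$ is the Kostant cascade functional $F = \sum_{\beta} x_{-\beta}^{*}$, where $\beta$ ranges over the Kostant cascade of $\sigma$. The equation $F([\widehat{F}, x_{-\beta}]) = F(x_{-\beta})$ for each cascade root $\beta$ reduces to $\beta(\widehat{F}) = 1$; expanding $\beta$ in the basis of simple roots gives one linear equation in the $\alpha_i(\widehat{F})$ per cascade element. For the types $\E_7, \E_8, \F_4, \G_2$, where $w_j = -\text{id}$, the cascade spans $\mf{h}^{*}$, so the linear system has a unique solution, which one checks matches the values recorded in Table \ref{tab:ExceptionalEigenvalue}.

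The $\E_6$ case requires extra care because $w_j$ acts as the nontrivial Dynkin diagram involution (swapping $\alpha_1 \leftrightarrow \alpha_6$ and $\alpha_3 \leftrightarrow \alpha_5$, fixing $\alpha_2, \alpha_4$). Here the cascade itself no longer spans $\mf{h}^{*}$, but one obtains supplementary relations of the form $\alpha_i(\widehat{F}) + w_j\alpha_i(\widehat{F}) = c_i$ by applying the analogue of Lemma \ref{eigenvalue orbit} to the non-fixed orbits. Combining these with the cascade equations again gives a determined system whose unique solution is precisely the tabulated one.

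The main obstacle is simply the bookkeeping: each exceptional type has its own cascade and its own interplay with the Weyl involution $i_j$, and $\E_8$ in particular has a large cascade that must be enumerated carefully. However, since the rank is bounded and each computation is finite, verification is routine once the general setup is in place; no conceptually new ingredient is required beyond what Joseph used for the classical types.
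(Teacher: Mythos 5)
The paper does not prove this lemma at all: it is imported verbatim from Joseph's work (\textbf{\cite{Joseph}}, Section 5) as a citation, so there is no in-paper argument to compare against. Your proposal is, in outline, a reconstruction of what Joseph actually does -- pin down a regular functional supported on the Kostant cascade, read off linear equations $\beta(\widehat{F})=\pm 1$ for cascade roots $\beta$, and solve. That is the right skeleton, but as written it has three concrete problems. First, the opening reduction ``to a minimal Frobenius seaweed containing $\sigma$'' is both unjustified and unnecessary: $\widehat{F}$ is a global object of the ambient seaweed, and you cannot replace the seaweed by a smaller one without already knowing that the values $\alpha_i(\widehat{F})$, $\alpha_i\in\sigma$, are local to $\sigma$ -- which is essentially the content of the lemma. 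The correct source of locality is that the cascade roots of $\sigma$ lie in $\mathbb{N}\sigma$, so the corresponding equations involve only the unknowns $\{\alpha_i(\widehat{F})\}_{\alpha_i\in\sigma}$; no surgery on the seaweed is needed. Second, there is a sign slip that is not cosmetic: $[\widehat{F},x_{-\beta}]=-\beta(\widehat{F})x_{-\beta}$, so your displayed condition yields $\beta(\widehat{F})=-1$, not $+1$, for your choice of $F$; getting the signs in Table \ref{tab:ExceptionalEigenvalue} to come out right requires carefully tracking whether $\sigma$ sits in $\pi_1$ or $\pi_2$ and on which root vectors the Frobenius functional is supported. Since the entire content of the lemma is a table of signed integers, ``routine bookkeeping'' is exactly where this derivation can silently fail, and you should verify at least one low-rank case (e.g.\ $\G_2$, whose cascade is $\{3\alpha_1+2\alpha_2,\ \alpha_1\}$ in the Bourbaki ordering) end to end against the table before trusting the convention.

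Third, your $\E_6$ paragraph overclaims. The $\sigma$-local data there consist of the four cascade equations together with the two orbit relations $\alpha_i(\widehat{F})+i_1\alpha_i(\widehat{F})=0$ for $i=1,3$; this system does \emph{not} determine the individual values $\alpha_1(\widehat{F}),\alpha_3(\widehat{F}),\alpha_5(\widehat{F}),\alpha_6(\widehat{F})$, which genuinely depend on how the $\langle i_1 i_2\rangle$ orbits leave the component and interact with the rest of the seaweed. So there is no ``determined system whose unique solution is the tabulated one.'' What saves the statement is that Table \ref{tab:ExceptionalEigenvalue} only records the fixed-point entries $i=2,4$ for $\E_6$, and those two values (namely $\alpha_4(\widehat{F})$ from the innermost cascade root and $\alpha_2(\widehat{F})$ from the highest-root equation after substituting the other cascade relations) are already forced by the cascade alone. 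You should restate the $\E_6$ case as: the cascade determines exactly the entries the table claims, and the remaining simple eigenvalues are deliberately deferred to Lemma \ref{exceptional eigenvalue orbit}, which only asserts the sum relations.
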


\begin{table}[H]
\[\begin{tabular}{|l|l|l|}
\hline
Type & $\pm\alpha_i(\widehat{F})$ & $\pm\alpha_i(\widehat{F})$ \\
\hline
\hline
$E_6$ & -1, if $i=2$ & 1, if $i=4$ \\
\hline
$E_7$ & -1, if $i=1,4,6$ & 1, if $i=2,3,5,7$ \\
\hline
$E_8$ & -1, if $i=1,4,6,8$ & 1, if $i=2,3,5,7$ \\
\hline
$F_4$ & $(-1)^{i}$, if $i=1,2$ & 0, if $i=3,4$ \\
\hline 
$G_2$ & $-1$, if $i=1$ & 1, if $i=2$\\
\hline
\end{tabular}\]
\caption{Values of $\pm\alpha_i(\widehat{F})$}
\label{tab:ExceptionalEigenvalue}
\end{table}

Table \ref{tab:ExceptionalEigenvalue} can be used to find all simple eigenvalues for comopnents not of type $\E_6$.  For components of type $\E_6$, the following lemma 
can be applied.

\begin{lemma}[Joseph \textbf{\cite{Joseph}}, Section 5]
\label{exceptional eigenvalue orbit}
For the equation below, $\sigma$ is assumed to be a component of $\pi_1$ of type $\E_6$.  
If $\sigma$ is a maximally connected component of $\pi_2$,
then replace $\alpha_i\mapsto -\alpha_i$ and $i_1\mapsto i_2$.
Then
\[\alpha_i(\widehat{F})+i_1\alpha_i(\widehat{F})=
0 \text{ if } i=1,3.\]
\end{lemma}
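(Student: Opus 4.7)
The plan is to argue in parallel with the classical Lemma \ref{eigenvalue orbit}, exploiting the non-trivial diagram automorphism of $E_6$. First I would set up the restriction to a single component: let $\sigma$ be the type-$E_6$ maximally connected component of $\pi_1$, and note that the longest element $w_1\in W_\sigma$ is realized (on $\sigma$) by the diagram automorphism $\tau$ that swaps $\alpha_1\leftrightarrow\alpha_6$ and $\alpha_3\leftrightarrow\alpha_5$ while fixing $\alpha_2,\alpha_4$. Consequently, $i_1$ acts on $\sigma$ by the same involution, and the crucial feature for the argument is that the paired simple roots $(\alpha_1,\alpha_6)$ and $(\alpha_3,\alpha_5)$ are \emph{orthogonal} in the root system (they are non-adjacent in the Dynkin diagram).

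The main step is to apply the defining relation $F\circ\ad\widehat{F}=F$ to root vectors supported on $\sigma$, using Joseph's construction of $F$ as a sum of dual root vectors $x_{-\gamma}^{*}$ indexed by the cascade roots attached to $\sigma$. For each cascade root $\gamma$ one recovers the scalar equation $\gamma(\widehat{F})=1$. In the classical cases, the Lemma \ref{eigenvalue orbit} dichotomy ``$1$ if $(\alpha_i,i_1\alpha_i)<0$, $0$ if $(\alpha_i,i_1\alpha_i)=0$'' came directly from whether $\alpha_i+i_1\alpha_i$ is itself a root (hence a cascade root contributing a ``$+1$'' to the sum). In $E_6$, the orthogonality of $\alpha_1,\alpha_6$ (and of $\alpha_3,\alpha_5$) means that neither $\alpha_1+\alpha_6$ nor $\alpha_3+\alpha_5$ is a root, so no cascade equation forces the sum to equal $1$.

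To pin the sum to exactly $0$, I would use the $\tau$-invariance of the cascade of $\sigma$: the cascade roots of a type-$E_6$ component are either $\tau$-fixed or occur in $\tau$-pairs, so the system $\{\gamma(\widehat{F})=1\}$ is invariant under replacing $\widehat{F}$ by $\tau(\widehat{F})$. On the two-dimensional subspace of the Cartan dual spanned by $\alpha_1-\alpha_6$ and $\alpha_3-\alpha_5$ (the $(-1)$-eigenspace of $\tau^{*}$), the cascade equations impose no constraint and the unique consistent solution for the principal element (compatible with Table~\ref{tab:ExceptionalEigenvalue} fixing $\alpha_2(\widehat{F})=-1,\alpha_4(\widehat{F})=1$) satisfies $\alpha_1(\widehat{F})=-\alpha_6(\widehat{F})$ and $\alpha_3(\widehat{F})=-\alpha_5(\widehat{F})$; the case for $\sigma\subseteq\pi_2$ is handled by the sign flip $\alpha_i\mapsto-\alpha_i$.

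The main obstacle, as in the classical proof, is handling the interaction between $\sigma$ and the rest of the seaweed: a priori, the principal element depends on all of $\mf{p}(\pi_1\dd\pi_2)$, not just on $\sigma$. Joseph's Frobenius criterion (Theorem~\ref{thm:frobenius}) is exactly what decouples these contributions, since each $\langle i_1 i_2\rangle$-orbit hits $\pi_\cup$ exactly once, so the ``cross terms'' between the $E_6$ cascade and cascades of adjacent components cancel after restriction. Verifying this cancellation on the skew bilinear form $B_F$ is the technical heart of the argument; once completed, the relation $\alpha_i(\widehat{F})+i_1\alpha_i(\widehat{F})=0$ for $i=1,3$ follows as claimed.
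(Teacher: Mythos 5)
First, a point of comparison: the paper offers no proof of Lemma \ref{exceptional eigenvalue orbit}. Like its classical counterpart Lemma \ref{eigenvalue orbit}, it is imported verbatim from Joseph (Section 5) and used as a black box, so there is no in-paper argument to measure your sketch against. Judged on its own terms, your proposal assembles the right raw materials --- the diagram involution $\tau$ of $\E_6$, the orthogonality of the pairs $(\alpha_1,\alpha_6)$ and $(\alpha_3,\alpha_5)$, and the cascade relations $\gamma(\widehat{F})=1$ --- but the step that is supposed to deliver the conclusion is both deferred and mislocated.

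The concrete gap is in your third paragraph. The quantity to be computed, $\alpha_i(\widehat{F})+i_1\alpha_i(\widehat{F})$, is the value on $\widehat{F}$ of the functional $\alpha_1+\alpha_6$ (resp.\ $\alpha_3+\alpha_5$), which is $\tau$-\emph{invariant}; it does not live on the $(-1)$-eigenspace of $\tau^{*}$ spanned by $\alpha_1-\alpha_6$ and $\alpha_3-\alpha_5$. Having placed it there, you are forced to assert both that ``the cascade equations impose no constraint'' and that there is nonetheless a ``unique consistent solution'' satisfying the claimed antisymmetry --- two statements in tension, with the uniqueness never argued (it would have to come from the other components of the seaweed via Theorem \ref{thm:frobenius}, which you only gesture at before deferring ``the technical heart''). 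In fact the relation falls out of the cascade equations directly: the Kostant cascade of an $\E_6$ component is, in Bourbaki numbering, $\{\tilde\alpha,\ \alpha_1+\alpha_3+\alpha_4+\alpha_5+\alpha_6,\ \alpha_3+\alpha_4+\alpha_5,\ \alpha_4\}$, and every one of these roots is $\tau$-invariant. The equations $\alpha_4(\widehat{F})=1$ and $(\alpha_3+\alpha_4+\alpha_5)(\widehat{F})=1$ give $\alpha_3(\widehat{F})+\alpha_5(\widehat{F})=0$; then $(\alpha_1+\alpha_3+\alpha_4+\alpha_5+\alpha_6)(\widehat{F})=1$ gives $\alpha_1(\widehat{F})+\alpha_6(\widehat{F})=0$; and $\tilde\alpha(\widehat{F})=1$ forces $\alpha_2(\widehat{F})=-1$, recovering Table \ref{tab:ExceptionalEigenvalue}. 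What genuinely depends on the rest of the seaweed is the pair of \emph{differences} $\alpha_1(\widehat{F})-\alpha_6(\widehat{F})$ and $\alpha_3(\widehat{F})-\alpha_5(\widehat{F})$, i.e.\ precisely the quantities the lemma does not claim to determine. So your sketch, as written, does not establish the statement; the repair is to discard the eigenspace argument and replace it by the two displayed cascade identities.
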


To establish Theorem~\ref{thm:main} for exceptional seaweeds, we first examine seaweeds with components of type $\E_6$ in Section~\ref{E6}. 
Because the Weyl action is non-trivial on an $\E_6$ component, this case requires more analysis. 
The other exceptional Lie algebras are treated in Section~\ref{NotE6}.   

\subsection{The exceptional Lie algebra $\E_6$}\label{E6}


We begin with an example computation. We use the seaweed in Example \ref{FrobeniusE6Example}.  We first compute the simple eigenvalues, then the eigenvalues, and finally the spectrum. 

\begin{example}\label{ExceptionalSimpleEigenvalues}
Figure \ref{SimpleEigenvaluesE6} shows the simple eigenvalues for $\mf{p}_{6}^\E(\Theta_1 \dd \Theta_2)$.  
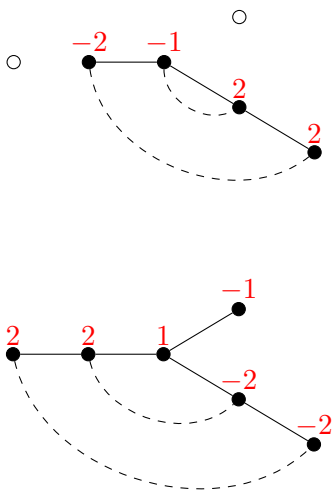
\begin{figure}[H]
\[\begin{tikzpicture}

\draw (1,0) node[draw,circle,fill=white,minimum size=5pt,inner sep=0pt] (1) {};
\draw (2,0) node[draw,circle,fill=black,minimum size=5pt,inner sep=0pt] (2) {};
\draw (3,0) node[draw,circle,fill=black,minimum size=5pt,inner sep=0pt] (3) {};
\draw (4,-.6) node[draw,circle,fill=black,minimum size=5pt,inner sep=0pt] (4) {};
\draw (5,-1.2) node[draw,circle,fill=black,minimum size=5pt,inner sep=0pt] (5) {};
\draw (4,.6) node[draw,circle,fill=white,minimum size=5pt,inner sep=0pt] (6) {};

\draw (2) to (3);
\draw (3) to (4);
\draw (4) to (5);

\draw [dashed] (2) to [bend right=60] (5);
\draw [dashed] (3) to [bend right=60] (4);

\node at (2,.25) [color=red] {$-2$};
\node at (3,.25) [color=red] {$-1$};
\node at (4,-.35) [color=red] {$2$};
\node at (5,-.95) [color=red] {$2$};

;\end{tikzpicture}\]

\[\begin{tikzpicture}

\draw (1,0) node[draw,circle,fill=black,minimum size=5pt,inner sep=0pt] (1) {};
\draw (2,0) node[draw,circle,fill=black,minimum size=5pt,inner sep=0pt] (2) {};
\draw (3,0) node[draw,circle,fill=black,minimum size=5pt,inner sep=0pt] (3) {};
\draw (4,-.6) node[draw,circle,fill=black,minimum size=5pt,inner sep=0pt] (4) {};
\draw (5,-1.2) node[draw,circle,fill=black,minimum size=5pt,inner sep=0pt] (5) {};
\draw (4,.6) node[draw,circle,fill=black,minimum size=5pt,inner sep=0pt] (6) {};

\draw (1) to (2);
\draw (2) to (3);
\draw (3) to (4);
\draw (4) to (5);
\draw (3) to (6);

\draw [dashed] (1) to [bend right=60] (5);
\draw [dashed] (2) to [bend right=60] (4);

\node at (1,.25) [color=red] {$2$};
\node at (2,.25) [color=red] {$2$};
\node at (3,.25) [color=red] {$1$};
\node at (4,-.35) [color=red] {$-2$};
\node at (5,-.95) [color=red] {$-2$};
\node at (4,.85) [color=red] {$-1$};

;\end{tikzpicture}\]
\caption{The simple eigenvalues of $\mf{p}_{6}^\E(\Theta_1 \dd \Theta_2)$}
\label{SimpleEigenvaluesE6} 
\end{figure}
\end{example}

We partition the multiset of eigenvalues according to the maximally connected components $\sigma$ of $\pi_1$ and $\pi_2$.  
Let $\sigma$ be a maximally connected
component of $\pi_1$.  If $\sigma$ is of Type $\E_6$, then
\begin{eqnarray*}\label{SE6}
\mathcal{E}(\sigma)=\{\beta(\widehat{F})\mid\beta\in\mathbb{N}\sigma\cap\Delta_+\}
\cup\{0^4\}. 
\end{eqnarray*}

To compute the eigenvalues of $\mf{p}_{6}^\E(\Theta_1 \dd \Theta_2)$, note that this seaweed has a single type-A component
$\sigma_1=\{\alpha_1,\alpha_3, \alpha_4, \alpha_5  \}$ on the top and a single type-$\E_6$ component on the bottom 
$$\sigma_2=\{\alpha_1,\alpha_2, \alpha_3, \alpha_4, \alpha_5, \alpha_6  \}.$$  We compute $\mathcal{E}(\sigma_1)$ as before to yield
$$\mathcal{E}(\sigma_1) = \{-3^1, -2^1, -1^2, 0^2, 1^2, 2^2, 3^1, 4^1\}.$$
We now compute $\mathcal{E}(\sigma_2)$.  
The positive roots for the computation of $\mathcal{E}(\sigma_2)$ form a thirty-six element set.  See Table \ref{tab:E6 eigenvalue}, where the left column lists the positive roots, and the right column lists the associated eigenvalue.

\newpage
\begin{table}[H]
\[\begin{tabular}{|c|c|}
\hline
Positive Root & Eigenvalue  \\
\hline
\hline
$2\alpha_1 + 2\alpha_2 + 2\alpha_3 + 3\alpha_4 + \alpha_5 + \alpha_6$ & 1 \\
\hline
$2\alpha_1 + \alpha_2 + 2\alpha_3 + 3\alpha_4 + \alpha_5 + \alpha_6$ & 2  \\
\hline
$2\alpha_1 + \alpha_2 + 2\alpha_3 + 2\alpha_4 + \alpha_5 + \alpha_6$  & 1 \\
\hline
$2\alpha_1 + \alpha_2 + \alpha_3 + 2\alpha_4 + \alpha_5 + \alpha_6$ & 3  \\
\hline
$\alpha_1 + \alpha_2 + 2\alpha_3 + 2\alpha_4 + \alpha_5 + \alpha_6$ &  -1 \\
\hline
$2\alpha_1 + \alpha_2 + \alpha_3 + 2\alpha_4 + \alpha_6$ &  5 \\
\hline
$\alpha_1 + \alpha_2 + \alpha_3 + 2\alpha_4 + \alpha_5 + \alpha_6$ & -3  \\
\hline
$\alpha_1 + \alpha_2 + 2\alpha_3 + 2\alpha_4 + \alpha_5$ &  1 \\
\hline
$\alpha_1 + \alpha_2 + \alpha_3 + 2\alpha_4 + \alpha_6$ & -1 \\
\hline
$\alpha_1 + \alpha_2 + \alpha_3 + 2\alpha_4 + \alpha_5$ & 3 \\
\hline
$\alpha_1 + \alpha_2 + \alpha_3 + \alpha_4 + \alpha_5 + \alpha_6$ & 0 \\
\hline
$\alpha_1 + \alpha_2 + \alpha_3 + 2\alpha_4$ & 1 \\
\hline
$\alpha_1 + \alpha_2 + \alpha_3 + \alpha_4 + \alpha_5$ & 1 \\
\hline
$\alpha_1 +\alpha_2 + \alpha_3 + \alpha_4 + \alpha_6$ & -2 \\
\hline
$\alpha_1 + \alpha_3 + \alpha_4 + \alpha_5 + \alpha_6$ &  2 \\
\hline
$\alpha_1 + \alpha_3 + \alpha_4 + \alpha_5$ & -4 \\
\hline
$\alpha_1 + \alpha_2 + \alpha_4 + \alpha_5$ & -1 \\
\hline
$\alpha_1 + \alpha_2 + \alpha_3 + \alpha_4$ & 0  \\
\hline
$\alpha_1 + \alpha_3 + \alpha_4 + \alpha_6$ & 4 \\
\hline
$\alpha_2 + \alpha_3 + \alpha_4 + \alpha_6$ & 3 \\
\hline
$\alpha_1 + \alpha_4 + \alpha_5$ & -3 \\
\hline
$\alpha_1 + \alpha_3 + \alpha_4$ & -2  \\
\hline
$\alpha_1 + \alpha_2 + \alpha_4$ & 2 \\
\hline
$\alpha_2 + \alpha_3 + \alpha_4$ & 1 \\
\hline
$\alpha_3 + \alpha_4 + \alpha_6$ & 5 \\
\hline
$\alpha_1 + \alpha_5$ & -4 \\
\hline
$\alpha_1 + \alpha_4$ & 0 \\
\hline
$\alpha_3 + \alpha_4$ & -1 \\
\hline
$\alpha_2 + \alpha_4$ & 3 \\
\hline
$\alpha_3 + \alpha_6$ & 4 \\
\hline
$\alpha_1$ & -2  \\
\hline
$\alpha_2$ & -1 \\
\hline
$\alpha_3$ & -2 \\
\hline
$\alpha_4$ & 1 \\
\hline
$\alpha_5$ & 2  \\
\hline
$\alpha_6$ & 2 \\
\hline
\end{tabular}\]
\caption{Eigenvalues associated with the component $\sigma_2=\{\alpha_1,\alpha_2, \alpha_3, \alpha_4, \alpha_5, \alpha_6  \}$}
\label{tab:E6 eigenvalue}
\end{table}

\noindent 
Consequently, 
$$\mathcal{E}(\sigma_2) = \{-4^2, -3^2, -2^4, -1^{5}, 0^{3}, 1^7, 2^5, 3^4, 4^2, 5^2\}\cup\{0^4\} = \{-4^2, -3^2, -2^4, -1^{5}, 0^{7}, 1^7, 2^5, 3^4, 4^2, 5^2\}.$$

  


The union of the sets $\mathcal{E}(\sigma_1)$ and $\mathcal{E}(\sigma_2)$ gives the spectrum.  See Table \ref{tab:E6ExampleEigenvalue}. 

\begin{table}[H]
\[\begin{tabular}{|l||l|l|l|l|l|l|l|l|l|l|}
\hline
Eigenvalue & -4 & -3 & -2 & -1 & 0 & 1 & 2 & 3 & 4 & 5 \\
\hline
Multiplicity & 2 & 3 & 5 & 7 & 9 & 9 & 7 & 5 & 3 & 2 \\
\hline
\end{tabular}\]
\caption{Spectrum of $\mf{p}_{6}^\E(\Theta_1 \dd \Theta_2)$ with multiplicities}
\label{tab:E6ExampleEigenvalue}
\end{table}



Using Theorem \ref{thm:frobenius}, it is straightforward to show, by exhaustion, that there are, up to isomorphism, seventy-four Frobenius seaweed subalgebras of $\E_6$.
\footnote{There are, up to isomorphism, two Frobenius seaweeds in $\G_2$, eight in $\F_4$, seventy-four in $\E_6$, one hundred forty-three in $\E_7$, and three hundred one in $\E_8$.}  
Of these, fourteen contain a component of type $\E_6$.  (See Appendix A.)  The simple eigenvalues associated with the $\E_6$ component take on one of nine possible configurations, which we note in Remark~\ref{remark} below.  

\begin{remark}\label{remark}
We list the simple eigenvalues according to the order of simple roots.  Observe that the first configuration occurs in the bottom of the orbit meander of $\mf{p}_{6}^\E(\Theta_1 \dd \Theta_2)$.
\end{remark}

\begin{table}[H]
\[\begin{tabular}{|c|c|}
\hline
Configuration & Simple eigenvalues  \\
\hline
\hline
1 & -2 -1 -2 1 2 2 \\
\hline
2 & -2 -1 1 1 -1 2  \\
\hline
3 & -1 -1 2 1 -2 1 \\
\hline
4 & 1 -1 -2 1 2 -1 \\
\hline
5 & 1 -1 -1 1 1 -1 \\
\hline
6 & 2 -1 -1 1 1 -2 \\
\hline
7 & -1 -1 -1 1 1 1 \\
\hline
8 & -1 -1 1 1 -1 1 \\
\hline
9 & 1 -1 1 1 -1 -1 \\
\hline
\end{tabular}\]
\caption{Configurations and simple eigenvalues for an $\E_6$ component}
\label{tab:E6simple eigenvalue}
\end{table}

For each of the configurations in Table \ref{tab:E6simple eigenvalue}, computations similar to those used in Example \ref{ExceptionalSimpleEigenvalues} can be used to show that 
the spectrum of any $\E_6$ component consists of an unbroken sequence of integers centered at one-half, and the associated eigenspace multiplicities form a symmetric distribution.

We next consider the 
remaining exceptional Lie algebras.

\subsection{The exceptional Lie algebras $\E_7$, $\E_8$, $\F_4$, and $\G_2$}\label{NotE6}

Because Table \ref{tab:ExceptionalEigenvalue} gives all simple eigenvalues in each of these cases, we need only consider the eigenvalues associated to a component of each type.  
We use Lemma \ref{Exceptionaleignevalue table} to find the simple eigenvalues for each component.  

\subsubsection{A component of type $\E_7$}

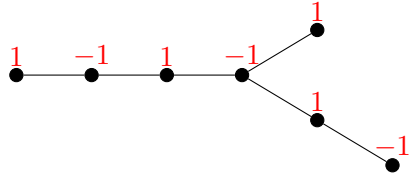
\begin{figure}[H]
\[\begin{tikzpicture}

\draw (0,0) node[draw,circle,fill=black,minimum size=5pt,inner sep=0pt] (0) {};
\draw (1,0) node[draw,circle,fill=black,minimum size=5pt,inner sep=0pt] (1) {};
\draw (2,0) node[draw,circle,fill=black,minimum size=5pt,inner sep=0pt] (2) {};
\draw (3,0) node[draw,circle,fill=black,minimum size=5pt,inner sep=0pt] (3) {};
\draw (4,-.6) node[draw,circle,fill=black,minimum size=5pt,inner sep=0pt] (4) {};
\draw (5,-1.2) node[draw,circle,fill=black,minimum size=5pt,inner sep=0pt] (5) {};
\draw (4,.6) node[draw,circle,fill=black,minimum size=5pt,inner sep=0pt] (6) {};

\draw (0) to (1);
\draw (1) to (2);
\draw (2) to (3);
\draw (3) to (4);
\draw (4) to (5);
\draw (3) to (6);

\node at (0,.25) [color=red] {$1$};
\node at (1,.25) [color=red] {$-1$};
\node at (2,.25) [color=red] {$1$};
\node at (3,.25) [color=red] {$-1$};
\node at (4,-.35) [color=red] {$1$};
\node at (5,-.95) [color=red] {$-1$};
\node at (4,.85) [color=red] {$1$};

;\end{tikzpicture}\]
\caption{The simple eigenvalues of a component $\sigma$ of type $\E_7$}
\label{FrobeniusE7} 
\end{figure}

\noindent
The eigenvalues of $\sigma$ are given by
\begin{eqnarray*}\label{SE7}
\mathcal{E}(\sigma)=\{\beta(\widehat{F})\mid\beta\in\mathbb{N}\sigma\cap\Delta_+\}
\cup\{0^7\}. 
\end{eqnarray*}

\noindent 
The spectrum of $\sigma$ and the multiplicities of associated eigenspaces are listed below (see Table \ref{tab:E7eigenvalue}).  

\begin{table}[H]
\[\begin{tabular}{|l||l|l|l|l|}
\hline
Eigenvalue& -1 & 0 & 1 & 2 \\
\hline
Multiplicity & 7 & 28 & 28 & 7 \\
\hline
\end{tabular}\]
\caption{Spectrum of $\sigma$ with multiplicities}
\label{tab:E7eigenvalue}
\end{table}

\subsubsection{A component of type $\E_8$}

\begin{figure}[H]
\[\begin{tikzpicture}

\draw (-1,0) node[draw,circle,fill=black,minimum size=5pt,inner sep=0pt] (-1) {};
\draw (0,0) node[draw,circle,fill=black,minimum size=5pt,inner sep=0pt] (0) {};
\draw (1,0) node[draw,circle,fill=black,minimum size=5pt,inner sep=0pt] (1) {};
\draw (2,0) node[draw,circle,fill=black,minimum size=5pt,inner sep=0pt] (2) {};
\draw (3,0) node[draw,circle,fill=black,minimum size=5pt,inner sep=0pt] (3) {};
\draw (4,-.6) node[draw,circle,fill=black,minimum size=5pt,inner sep=0pt] (4) {};
\draw (5,-1.2) node[draw,circle,fill=black,minimum size=5pt,inner sep=0pt] (5) {};
\draw (4,.6) node[draw,circle,fill=black,minimum size=5pt,inner sep=0pt] (6) {};

\draw (-1) to (0);
\draw (0) to (1);
\draw (1) to (2);
\draw (2) to (3);
\draw (3) to (4);
\draw (4) to (5);
\draw (3) to (6);

\node at (-1,.25) [color=red] {$-1$};
\node at (0,.25) [color=red] {$1$};
\node at (1,.25) [color=red] {$-1$};
\node at (2,.25) [color=red] {$1$};
\node at (3,.25) [color=red] {$-1$};
\node at (4,-.35) [color=red] {$1$};
\node at (5,-.95) [color=red] {$-1$};
\node at (4,.85) [color=red] {$1$};

;\end{tikzpicture}\]
\caption{The simple eigenvalues of a component $\sigma$ of type $\E_8$}
\label{FrobeniusE8} 
\end{figure}

The eigenvalues of $\sigma$ are given by

\begin{eqnarray*}\label{SE8}
\mathcal{E}(\sigma)=\{\beta(\widehat{F})\mid\beta\in\mathbb{N}\sigma\cap\Delta_+\}
\cup\{0^8\}. 
\end{eqnarray*}

\noindent 
The spectrum of $\sigma$ and the multiplicities of associated eigenspaces are listed below (see Table \ref{tab:E8eigenvalue}).  

\begin{table}[H]
\[\begin{tabular}{|l||l|l|l|l|}
\hline
Eigenvalue& -1 & 0 & 1 & 2 \\
\hline
Multiplicity & 14 & 50 & 50 & 14 \\
\hline
\end{tabular}\]
\caption{Spectrum of $\sigma$ with multiplicities}
\label{tab:E8eigenvalue}
\end{table}

\subsubsection{A component of type $\F_4$}







\begin{figure}[H]
\[\begin{tikzpicture}
[decoration={markings,mark=at position 0.6 with 
{\arrow{angle 90}{>}}}]

\draw (1,.75) node[draw,circle,fill=black,minimum size=5pt,inner sep=0pt] (1+) {};
\draw (2,.75) node[draw,circle,fill=black,minimum size=5pt,inner sep=0pt] (2+) {};
\draw (3,.75) node[draw,circle,fill=black,minimum size=5pt,inner sep=0pt] (3+) {};
\draw (4,.75) node[draw,circle,fill=black,minimum size=5pt,inner sep=0pt] (4+) {};

\draw (1+) to (2+);
\draw (3+) to (4+);

\draw (2,.79) -- (3,.79);
\draw (2,.71) -- (3,.71);
\draw (2.6,.89) -- (2.45,.75);
\draw (2.6,.61) -- (2.45,.75);

\node at (1,.25) [color=red] {$-1$};
\node at (2,.25) [color=red] {$1$};
\node at (3,.25) [color=red] {$0$};
\node at (4,.25) [color=red] {$0$};

;\end{tikzpicture}\]
\caption{Simple eigenvalues for a component $\sigma$ of type $\F_4$}
\end{figure}

\noindent
The eigenvalues of $\sigma$ are given by
\begin{eqnarray*}\label{SF}
\mathcal{E}(\sigma)=\{\beta(\widehat{F})\mid\beta\in\mathbb{N}\sigma\cap\Delta_+\}
\cup\{0^4\}. 
\end{eqnarray*}

\noindent
The spectrum of $\sigma$ and the multiplicities of associated eigenspaces are listed below (see Table \ref{tab:F4eigenvalue}).  

\begin{table}[H]
\[\begin{tabular}{|l||l|l|l|l|}
\hline
Eigenvalue& -1 & 0 & 1 & 2 \\
\hline 
Multiplicity & 1 & 13 & 13 & 1 \\
\hline
\end{tabular}\]
\caption{The spectrum of $\sigma$ with multiplicities}
\label{tab:F4eigenvalue}
\end{table}


\subsubsection{A component of type $\G_2$}






\begin{figure}[H]
\[\begin{tikzpicture}
[decoration={markings,mark=at position 0.6 with 
{\arrow{angle 90}{>}}}]

\draw (1,.75) node[draw,circle,fill=black,minimum size=5pt,inner sep=0pt] (1+) {};
\draw (2,.75) node[draw,circle,fill=black,minimum size=5pt,inner sep=0pt] (2+) {};

\draw (1,.83) -- (2,.83);
\draw (1,.75) -- (2,.75);
\draw (1,.67) -- (2,.67);
\draw (1.6,.89) -- (1.45,.75);
\draw (1.6,.61) -- (1.45,.75);

\node at (1,.25) [color=red] {$1$};
\node at (2,.25) [color=red] {$-1$};

;\end{tikzpicture}\]
\caption{Simple eigenvalues for a component $\sigma$ of type $\G_2$}
\end{figure}

\noindent
The eigenvalues of $\sigma$ are given by 
\begin{eqnarray*}\label{SG}
\mathcal{E}(\sigma)=\{\beta(\widehat{F})\mid\beta\in\mathbb{N}\sigma\cap\Delta_+\}
\cup\{0^2\}. 
\end{eqnarray*}

\noindent
The spectrum of $\sigma$ and the multiplicities of associated eigenspaces are listed below (see Table \ref{tab:G2eigenvalue}).

\begin{table}[H]
\[\begin{tabular}{|l||l|l|l|l|}
\hline
Eigenvalue& -1 & 0 & 1 & 2 \\
\hline
Multiplicity & 1 & 3 & 3 & 1 \\
\hline
\end{tabular}\]
\caption{Spectrum with multiplicities}
\label{tab:G2eigenvalue}
\end{table}








Since the spectrum of any component of exceptional type 
consists of an unbroken sequence of integers centered at one-half, and the associated eigenspace multiplicities form a symmetric distribution, we conclude Theorem \ref{thm:main} holds for the exceptional Lie algebras.

\section{Epilogue}
The unbroken spectrum property of a Frobenius seaweed subalgebra $\mathfrak{g}$ of a simple Lie algebra is a computable algebraic invariant of $\mathfrak{g}$ but it is not characteristic -- as the following example illustrates.

\begin{example}  Consider the poset $\cal P =$ $\{1,2,3,4\}$ with $1,2 \preceq 3 \preceq 4$ and no relations other than those following from these.  Letting 
$\mathbb{C}$ be the ground field, one may construct an 
associative matrix algebra $A(\cal P, \mathbb{C})$ 
which is the span over $\mathbb{C}$ of $e_{i,j}$, $i\preceq j$ with multiplication given by
$e_{i,j}e_{l,k}=e_{i,k}$ if $j=l$ and 0 otherwise.  The underlying vector space of $A(\cal P, \mathbb{C})$ becomes a Lie algebra $\mathfrak{g}(\cal P, \mathbb{C})$ under commutator multiplication and, if one considers only the elements of trace zero, may be regarded as a Lie subalgebra of 
$\A_4=\mathfrak{sl}(4)$:  in fact, a Frobenius Lie subalgebra with Frobenius functional $F= e_{1,4}^* + e_{2,4}^*+e_{2,3}^*$, principal element 
$\widehat{F}=\rm{diag}\left(\frac{1}{2},\frac{1}{2},-\frac{1}{2},-\frac{1}{2}\right)$, and unbroken spectrum $\{0^4,1^4\}$. The algebra $\mathfrak{g}(\cal P, \mathbb{C})$ has rank 3 and dimension 8. However, the only Frobenius seaweed subalgebra of $\mathfrak{sl}(4)$ with the same rank and dimension is
$\mf{p}_4^\A( \{\alpha_3, \alpha_1\} \dd \{\alpha_3, \alpha_2\})$,
  but the latter has spectrum given by the multiset $\{-1,0^3,1^3,2\}$.
  See \textbf{\cite{CM}}. 
\end{example}



\newpage
\section{Appendix A - Frobenius seaweeds in $\E_6$}

We provide a list of Frobenius seaweed subalgebras in $\E_6$.  To ease notation, we will denote, for example,  a seaweed $\mf{p}_6^\E(\{\alpha_5, \alpha_4, \alpha_3,\alpha_1\} \dd \{\alpha_6, \alpha_5, \alpha_4, \alpha_3, \alpha_2, \alpha_1\})$ 
by $\{5,4,3,1\}, \{6,5,4,3,2,1\}$.  Note that the seaweeds listed in 1-14 contain a component of type $\E_6$.  

\begin{center}
\begin{tabular}{ll}

\begin{tabular}{|c|c|}

\hline
1 & $ \{6,5,4,3,2,1\}, \{6,5,4,3\} $ \\
\hline
2 & $ \{6,5,4,3,2,1\}, \{6,5,4,2\}$ \\
\hline
3 & $ \{6,5,4,3,2,1\}, \{5,4,3,1\} $ \\
\hline
4 & $ \{6,5,4,3,2,1\}, \{4,3,2,1\} $ \\
\hline
5 & $ \{6,5,4,3,2,1\}, \{6,5,4\} $ \\
\hline
6 & $ \{6,5,4,3,2,1\}, \{6,5,3\} $ \\
\hline
7 & $ \{6,5,4,3,2,1\}, \{6,5,1\} $ \\
\hline
8 & $ \{6,5,4,3,2,1\}, \{6,4,3\} $ \\
\hline
9 & $  \{6,5,4,3,2,1\}, \{6,3,1\} $ \\
\hline
10 & $ \{6,5,4,3,2,1\}, \{5,4,1\} $ \\
\hline
11 & $ \{6,5,4,3,2,1\}, \{5,3,1\} $ \\
\hline
12 & $  \{6,5,4,3,2,1\}, \{4,3,1\} $ \\
\hline
13 & $ \{6,5,4,3,2,1\}, \{6,3\} $ \\
\hline
14 & $ \{6,5,4,3,2,1\}, \{5,1\} $ \\
\hline
15 & $ \{5,4,3,2,1\}, \{6,5,4\} $ \\
\hline
16 & $ \{6,4,3,2,1\}, \{6,5,3,2,1\}$ \\
\hline
17 & $ \{6,5,3,2,1\}, \{6,4,3,2\}$ \\
\hline
18 & $ \{6,5,4,2,1\}, \{6,5,3,2,1\}$ \\
\hline
19 & $ \{6,5,4,2,1\}, \{6,4,3,2\}$ \\
\hline
20 & $ \{6,5,4,3,1\}, \{6,5,4,2,1\}$ \\
\hline
21 & $ \{6,5,4,3,1\}, \{6,4,3,2,1\}$ \\
\hline
22 & $ \{6,5,4,3,1\}, \{6,5,2,1\}$ \\
\hline
23 & $ \{6,5,4,3,1\}, \{5,2,1\}$ \\
\hline
24 & $ \{6,5,4,3,2\}, \{4,3,2,1\}$ \\
\hline
25 & $ \{6,5,4,3,2\}, \{4,3,1\}$ \\
\hline
26 & $ \{6,5,4,3,2\}, \{4,2,1\}$ \\
\hline
27 & $ \{6,5,4,3,2\}, \{3,2,1\}$ \\
\hline
28 & $ \{6,5,4,3,2\}, \{2,1\}$ \\
\hline
29 & $ \{5,3,2,1\}, \{6,5,4,3,1\}$ \\
\hline
30 & $ \{5,3,2,1\}, \{6,5,4,2,1\}$ \\
\hline
31 & $ \{5,3,2,1\}, \{6,4,3,2,1\}$ \\
\hline
32 & $ \{5,3,2,1\}, \{6,4,3,2\}$ \\
\hline
33 & $ \{5,3,2,1\}, \{6,4,2,1\}$ \\
\hline
34 & $ \{5,3,2,1\}, \{6,4,1\}$ \\
\hline
35 & $ \{6,3,2,1\}, \{6,5,4,3,1\}$ \\
\hline
36 & $ \{6,3,2,1\}, \{5,4,2,1\}$ \\
\hline
37 & $ \{5,4,2,1\}, \{6,5,3,2,1\}$ \\
\hline

\end{tabular}


\hspace{1cm}

&

\begin{tabular}{|c|c|}

\hline
38 & $ \{5,4,2,1\}, \{6,4,3,2,1\} $ \\
\hline
39 & $ \{6,4,2,1\}, \{6,5,3,2,1\}$ \\
\hline
40 & $ \{6,5,2,1\}, \{6,4,3,2\}$ \\
\hline
41 & $ \{5,4,3,1\}, \{6,5,4,3,2\}$ \\
\hline
42 & $ \{6,4,3,1\}, \{6,5,4,2,1\}$ \\
\hline
43 & $ \{6,4,3,1\}, \{6,5,3,2,1\}$ \\
\hline
44 & $ \{6,4,3,1\}, \{6,5,2,1\}$ \\
\hline
45 & $ \{6,4,3,1\}, \{5,3,2,1\}$ \\
\hline
46 & $ \{6,4,3,1\}, \{5,2,1\}$ \\
\hline
47 & $ \{6,5,4,1\}, \{6,5,3,2,1\}$ \\
\hline
48 & $ \{6,5,4,1\}, \{6,4,3,2,1\}$ \\
\hline
49 & $ \{6,5,4,1\}, \{6,3,2,1\}$ \\
\hline
50 & $ \{6,5,3,2\}, \{6,5,4,3,1\}$ \\
\hline
51 & $ \{6,5,3,2\}, \{6,5,4,2,1\}$ \\
\hline
52 & $ \{6,5,3,2\}, \{6,4,3,2,1\}$ \\
\hline
53 & $ \{6,5,3,2\}, \{6,5,4,1\}$ \\
\hline
54 & $ \{6,5,3,2\}, \{6,4,2,1\}$ \\
\hline
55 & $ \{6,5,3,2\}, \{5,4,2,1\}$ \\
\hline
56 & $ \{6,5,3,2\}, \{6,4,1\}$ \\
\hline
57 & $ \{5,4,3,2\}, \{6,5,3,1\}$ \\
\hline
58 & $ \{5,4,3,2\}, \{6,5,1\}$ \\
\hline
59 & $ \{5,4,3,2\}, \{6,3,1\}$ \\
\hline
60 & $ \{6,5,4,2\}, \{5,4,3,2,1\}$ \\
\hline
61 & $ \{6,5,4,3\}, \{5,4,3,2,1\}$ \\
\hline
62 & $ \{5,2,1\}, \{6,4,3,2\}$ \\
\hline
63 & $ \{6,4,1\}, \{6,5,3,2,1\}$ \\
\hline
64 & $ \{5,3,2\}, \{6,5,4,2,1\}$ \\
\hline
65 & $ \{5,3,2\}, \{6,4,3,2,1\}$ \\
\hline
66 & $ \{5,3,2\}, \{6,4,2,1\}$ \\
\hline
67 & $ \{5,3,2\}, \{6,4,1\}$ \\
\hline
68 & $ \{6,3,2\}, \{6,5,4,3,1\}$ \\
\hline
69 & $ \{6,3,2\}, \{6,5,4,1\}$ \\
\hline
70 & $ \{6,3,2\}, \{5,4,2,1\}$ \\
\hline
71 & $ \{6,4,2\}, \{5,4,3,2,1\}$ \\
\hline
72 & $ \{6,5,2\}, \{5,4,3,2,1\}$ \\
\hline
73 & $ \{6,1\}, \{5,4,3,2\}$ \\
\hline
74 & $ \{6,2\}, \{5,4,3,2,1\}$ \\
\hline

\end{tabular}

\label{tab:E6Frobenius1}

\end{tabular}

\end{center}


\begin{thebibliography}{abcd}
\bibitem{BD}
A. Belavin and V. Drinfel'd, Solutions of the
classical Yang–Baxter equation for simple Lie
algebras, \textit{Funktsional. Anal. i Prilozhen} 16(3):1-29, 1982.

\bibitem{DIndex}
A. Cameron, V. Coll, and M. Hyatt, 
Combinatorial index formulas for Lie algebras of seaweed type, 
\textit{Communications in Algebra}, 48(12):5430-5454, 2020.  

\bibitem{Meanders3}
V. Coll, A. Dougherty, M. Hyatt, and N. Mayers, Meander Graphs and Frobenius Seaweed Lie Algebras III, \textit{Journal of Generalized Lie Theory and Applications}, 11: 266. doi: 10.4172/1736-4337, 2017.

\bibitem{UDF1}
V. Coll, M. Gerstenhaber, and A. Giaquinto, An Explicit Deformation Formula with Non-Commuting Derivations, \textit{Ring Theory-Weizmann Science Press}, 396-403, 1989.  

\bibitem{UDF2}
V. Coll, M. Gerstenhaber, and S. Schack, Universal Deformation Formulas and Breaking Symmetry, \textit{Journal of Pure and Applied Algebra}, 90:201-219, 1993.  


\bibitem{Coll typea}
V. Coll, M. Hyatt, and C. Magnant,
The unbroken spectrum of type-A Frobenius seaweeds,
\textit{Journal of Algebraic Combinatorics}, 48(2):289-305, 2017.



\bibitem{CM}
V. Coll and N. Mayers. ``The index of Lie poset algebras." \textit{J. Combin. Theory Ser. A}, 177, 2021.



\bibitem{dk}
V. Dergachev and A. Kirillov, Index of Lie algebras of seaweed type, \textit{J. Lie Theory} 10(2):331-343, 2000. 

\bibitem{DIATTA}
A. Diatta and B. Magna, On properties of principal elements of Frobenius Lie algebras, \textit{J. Lie Theory} 24:849-864, 2014. 


\bibitem{G1}
M. Gerstenhaber and A. Giaquinto, Boundary solutions of the classical Yang-Baxter equation, \textit{Letters Math. Physics} 40:337-353, 1997.

\bibitem{G2}
M. Gerstenhaber and A. Giaquinto, Graphs, Frobenius functionals, and the classical Yang-Baxter equation, arXiv:0808.2423v1, August 18, 2008.

\bibitem{G3}
M. Gerstenhaber and A. Giaquinto,
The Principal Element of a Frobenius Lie Algebra, \textit{Letters Math. Physics} 88(1):333-341, 2009.

\bibitem{Twist}
A. Giaquinto and J. Zhang, Bialgebra Actions, Twists, and Universal Deformation Formulas, \textit{Journal of Pure and Applied Algebra}, 128:133-151, 1998. 

\bibitem{Joseph5}
A. Joseph, 
The minimal orbit in a simple Lie algebra and its associated maximal ideal, \textit{Annales scientifiques de l'\'Ecole Normale Sup\'erieure}, 9(1):1-29, 1976.  

\bibitem{Joseph}
A. Joseph,
\newblock {On semi-invariants and index for biparabolic (seaweed) algebras, I}.
\newblock \textit{J. Algebra}, 305:487--515, 2006.

\bibitem{Joseph2}
A. Joseph, The integrality of an adapted pair.
\textit{Transformation Groups}, 20(3):771-816, 2015.

\bibitem{Joseph3}
A. Joseph and D Shafrir, Polynomiality of invariants, unimodularity and adapted pairs. \textit{Transformation Groups}, 15:851-882, 2010. 

\bibitem{Joseph4}
A. Joseph, The hidden semi-invariants generators of an almost-Frobenius biparabolic.  \textit{Transformation Groups}, 19(3):735-778, 2014.  

\bibitem{Kostant}
B. Kostant, The cascade of othogonal roots and the coadjoint structure of the nilradical of a Borel subgroup of a semisimple Lie group, \textit{Moscow Mathematical Journal}, 12(3):605-620, 2012.  

\bibitem{Ooms1}
A. Ooms, On Lie algebras having a primitive universal enveloping algebra, \textit{J. Algebra}, 32(3):488-500, 1974.

\bibitem{Ooms2}
A. Ooms, On Frobenius Lie algebras, \textit{Communications in Algebra}, 8:13-52, 1980.

\bibitem{Panyushev1}
D. Panyushev, Inductive formulas for the index of seaweed Lie algebras, \textit{Mosc. Math. J.} 1(2):221-241, 2001. 

\bibitem{Jacobson}
T. A. Springer and R. Steinberg, Conjugacy classes, Seminar on Algebraic Groups and Related Topics, Lecture Notes in Math., Springer, Berlin, Heidelberg, and New York.  131:167-266, 1970.

\end{thebibliography}
\end{document}